\documentclass[a4paper]{article}

\usepackage[T1]{fontenc}
\usepackage[margin=25mm]{geometry}
\usepackage{amsmath,amssymb,amsthm}
\usepackage{stix}
\usepackage[authoryear,round,semicolon,longnamesfirst]{natbib}
\usepackage[hidelinks]{hyperref}
\usepackage{amsmath,amssymb,amsfonts,amsthm,mathtools}
\usepackage{bm}
\usepackage{booktabs}
\usepackage{enumitem}
\usepackage[dvipsnames]{xcolor}
\usepackage{aliascnt}

\usepackage[capitalise]{cleveref}

\numberwithin{equation}{section}

\theoremstyle{plain}
\newtheorem{theorem}{Theorem}[section]

\newaliascnt{proposition}{theorem}
\newtheorem{proposition}[proposition]{Proposition}
\aliascntresetthe{proposition}

\newaliascnt{lemma}{theorem}
\newtheorem{lemma}[lemma]{Lemma}
\aliascntresetthe{lemma}

\theoremstyle{definition}
\newtheorem{assumption}{Assumption}
\crefname{assumption}{Assumption}{Assumptions}
\Crefname{assumption}{Assumption}{Assumptions}

\newaliascnt{definition}{theorem}
\newtheorem{definition}[definition]{Definition}
\aliascntresetthe{definition}

\newtheorem{example}{Example}
\crefname{example}{Example}{Examples}
\Crefname{example}{Example}{Examples}

\theoremstyle{remark}
\newaliascnt{remark}{theorem}
\newtheorem{remark}[remark]{Remark}
\aliascntresetthe{remark}

\newcommand{\R}{\mathbb R}
\newcommand{\E}{\mathbb E}
\newcommand{\Prob}{\mathbb P}
\newcommand{\HS}{\mathcal L_2}
\newcommand{\ip}[2]{\left\langle #1,#2\right\rangle}
\newcommand{\norm}[1]{\left\|#1\right\|}
\newcommand{\Tr}{\operatorname{Tr}}
\newcommand{\trans}{\mathsf{T}}
\newcommand{\Ran}{\operatorname{Ran}}
\newcommand{\Law}{\operatorname{Law}}
\newcommand{\divg}{\operatorname{div}}
\newcommand{\FC}{\mathcal{FC}_b^\infty}
\newcommand{\KL}{D_{\mathrm{KL}}}
\newcommand{\KLdiv}[2]{\KL\left(#1\,\middle\|\,#2\right)}
\newcommand{\epr}{\mathrm{epr}}
\newcommand{\rev}{\mathrm{rev}}
\newcommand{\irr}{\mathrm{irr}}
\newcommand{\Frev}{F_{\mathrm{rev}}}
\newcommand{\Firr}{F_{\mathrm{irr}}}

\DeclareMathOperator{\Exp}{Exp}

\newcommand{\fin}{\mathrm{f.}}

\newcommand{\mH}{\mathcal H}
\newcommand{\mU}{\mathcal U}
\newcommand{\mJ}{\mathcal J}
\newcommand{\mN}{\mathbb N}
\newcommand{\mR}{\mathbb R}
\newcommand{\mP}{\mathbb P}
\newcommand{\mQ}{\mathbb Q}
\newcommand{\md}{\mathop{}\!\mathrm{d}} 
\newcommand{\me}{\mathrm{e}}

\allowdisplaybreaks

\hypersetup{
  pdftitle={Entropy Production for Stationary Diffusions on Hilbert Spaces},
  pdfauthor={Zhuoyuan Li, Yue Zhao, Aiqing Zhu},
  pdfkeywords={Entropy production, Time reversal, Hilbert-space diffusion}
}

\usepackage{authblk}

\title{Entropy Production for Stationary Diffusions on Hilbert Spaces}
\author[1]{Zhuoyuan Li%
\thanks{The authors are listed in alphabetical order.\\
  Email addresses: \href{mailto:zy.li@nus.edu.sg}{\texttt{zy.li@nus.edu.sg}} (Z.L.),
  \href{mailto:yuezhao@nus.edu.sg}{\texttt{yuezhao@nus.edu.sg}} (Y.Z.),
  \href{mailto:zaq@nus.edu.sg}{\texttt{zaq@nus.edu.sg}} (A.Z.).
  }%
}
\author[1]{Yue Zhao\protect\footnotemark[1]}
\author[2]{Aiqing Zhu\protect\footnotemark[1]}

\affil[1]{
Institute for Functional Intelligent Materials, National University of Singapore,
Singapore 117544, Singapore
}
\affil[2]{
Department of Mathematics, National University of Singapore,
Singapore 119076, Singapore
}

\date{}

\begin{document}
\maketitle
\vspace{-2em}
\begin{abstract}
We study entropy production for stationary diffusions on separable Hilbert spaces with possibly degenerate, state-dependent trace-class covariance.
We first define the reversible--irreversible drift decomposition to provide a candidate drift for the reversed dynamics.
Then by working directly with the invariant measure, we establish a lower bound in terms of the extended stationary Cameron--Martin energy of the irreversible drift, without requiring a diffusion representation of the stationary reversal.
The bound implies infinite entropy production when the energy is infinite and we establish sufficient conditions for equality in the finite-energy regime.
We also develop complementary criteria for identifying the stationary reversal as a Hilbert-space diffusion with constant or continuous state-dependent diffusion coefficients, respectively.
Two nonlinear infinite-rank examples are provided to show that finite entropy production does not require the irreversible drift to lie in the covariance range, whereas entropy production may be infinite even with globally Lipschitz coefficients and injective covariance.

\end{abstract}

\section{Introduction}\label{sec:introduction}
Stochastic differential equations provide a natural description of physical systems whose evolution is influenced by random fluctuations \citep{Seifert2012}.
In a steady regime, the probability distribution of the state remains constant in time, while individual configurations continue to evolve.
However, the statistics of these trajectories need not be preserved when their temporal order is reversed.
This asymmetry is a feature of the dynamics that the stationary distribution alone does not reveal \citep[Chapter~4]{JiangQianQian2004}.
Its quantitative description involves comparing the probability laws of forward and time-reversed trajectories.
Relative entropy measures the statistical distinguishability of these laws, and its value per unit time defines the entropy production rate (EPR).

For a stationary time-homogeneous Markov process with continuous paths, let $\mP_{[0,T]}$ denote its path law on $[0,T]$, and let $\mP^{\mathrm{rev}}_{[0,T]}$ be its image under $\omega\mapsto(\omega_{T-t})_{0\le t\le T}$ as the time-reversed path law.
The entropy production rate (EPR) is defined as
\begin{equation}\label{eq:intro-entropy-production-rate}
    \epr\coloneqq\lim_{T\to+\infty}\frac{1}{T}\KLdiv{\mP_{[0,T]}}{\mP^{\mathrm{rev}}_{[0,T]}}\in [0,+\infty],
\end{equation}
where $\KLdiv{\cdot}{\cdot}$ denotes relative entropy.
It can be shown that the normalized relative entropy above is independent of $T$ in this setting, so the limit is superfluous.
EPR provides a quantitative measure of the time asymmetry of stationary dynamics.
In particular, it distinguishes reversible from irreversible dynamics even when they share the same invariant measure \citep[Chapter~4]{JiangQianQian2004}.

This path-space asymmetry plays a significant role in several areas.
In stochastic thermodynamics, entropy production underlies fluctuation relations and, with the appropriate physical time-reversal convention, quantifies dissipation \citep{Seifert2012}.
It also provides a basis for inferring dissipation from stationary observations \citep{RoldanParrondo2010} and for assessing the irreversibility introduced or distorted by numerical discretization \citep{KatsoulakisPantazisReyBellet2014,DaCostaPavliotis2023}.
Throughout this paper, time reversal acts only on the temporal order of the state path, without an additional state-space involution such as momentum reversal.

In this work, we explore whether the path-space formulation of EPR can extend to infinite-dimensional stochastic systems.
This question is natural for stochastic field theories, where the evolving state is a spatial field rather than a finite vector \citep{DaPratoZabczyk2014} and entropy production has been used to characterize irreversibility in, for example, active matter and stochastic hydrodynamic fields \citep{Nardini2017,ComplexFluidsDiscretisation2022}.
Meanwhile, our recent work \citep{ZhaoZhangLi2024,zhu2026identifiable} quantifies EPR from data within finite-dimensional stochastic dynamics, while a separate line of work \citep{li2026hypothesis} leads us to mesoscopic stochastic dynamics whose states belong naturally to function spaces.
The passage to infinite dimensions, however, is not merely a formal extension of the finite-dimensional theory.
Path-space relative entropy decreases under finite-dimensional projection \citep{Gray2011}; hence, finiteness at every finite-dimensional resolution does not preclude divergence as increasingly many modes are resolved.
A formulation based solely on finite-dimensional truncations is therefore insufficient, and EPR must instead be defined and analyzed directly on the full Hilbert-space path space.

We consider stationary diffusions on a separable Hilbert space $\mH$ governed by
\begin{equation}\label{eq:intro-hilbert-diffusion}
    \md X_t=F(X_t)\md t+B(X_t)\md W_t, \qquad a(x)=B(x)B(x)^*,
\end{equation}
where $W$ is cylindrical on a separable Hilbert space $\mU$, $B(x)\colon\mU\to\mH$ is Hilbert--Schmidt, and $\mu$ is an invariant probability measure.
The trace-class covariance $a(x)=B(x)B(x)^*$ may be degenerate and state dependent.
Our aim is to determine how the irreversible part of the drift, measured in the geometry induced by this noise, controls the full path-space entropy production.

Under the integrability and integration-by-parts assumptions of \cref{sec:diffusion-framework}, we obtain the reversible--irreversible decomposition
\begin{equation}\label{eq:intro-drift-decomposition}
    F=\Frev+\Firr, \qquad \Frev=\tfrac12 d, \quad \Firr=F-\tfrac12d,
\end{equation}
where $d$ is the drift correction characterized by the covariance $a$ with respect to $\mu$.
The irreversible drift is divergence-free with respect to $\mu$.
The corresponding formal candidate for the reversed drift is $\bar F=\Frev-\Firr$, and its identification with the actual stationary reversal will be discussed separately.

The connection between this decomposition and entropy production is explicit in finite dimensions.
Under suitable regularity and time-reversal hypotheses, \citet[Theorem~4.1]{DaCostaPavliotis2023} establish the formula
\begin{equation}\label{eq:intro-finite-dimensional-entropy}
    \epr =2\int_{\mathbb R^n} \norm{a(x)^{-1/2}\Firr(x)}_{\mathbb R^n}^{2} \mu(\md x)
\end{equation}
for finite-dimensional cases, provided that $\Firr(x)\in\Ran(a(x)^{1/2})$ for $\mu$-almost every $x$ and the integral is finite.
On one hand, the range condition ensures that the drift difference $\bar F-F=-2\Firr$ can be represented by a shift of the driving noise.
In finite dimensions, it simply excludes an irreversible drift component in directions with zero instantaneous noise variance; in particular, it is automatic when the covariance is invertible.
The integrability condition controls the stationary average of the required noise energy.
On the other hand, the inverse covariance gives greater weight to a given drift component when the noise is weaker in that direction.
Under the same hypotheses, the forward and reversed path laws are equivalent on every finite time interval.

However, in infinite dimensions, the absence of noise-free directions no longer guarantees that an irreversible drift admits a finite-energy representation through the noise.
This obstruction appears already for a constant, injective trace-class covariance $a$ on an infinite-dimensional Hilbert space, for which the range $\Ran(a^{1/2})$ is dense but not closed in $\mH$, and $a^{-1/2}$ is unbounded.
Under such circumstances, $\Firr(x)\notin\Ran(a^{1/2})$ may result from the accumulation of contributions over infinitely many weakly forced directions, rather than from a drift component in a noise-free direction.
Even when the pointwise energy is finite almost everywhere, its stationary average still requires separate control so that the integral is finite.
Moreover, the finite-dimensional identity $\Ran(a)=\Ran(a^{1/2})$ need not persist, so requiring membership in the covariance range would impose a stronger condition than finite noise energy.
The path-law comparison also requires a separate justification: a formal reversed drift, or its identification in each coordinate, does not by itself provide an $\mH$-valued reversed diffusion with an integrable drift.
Therefore, the finite-dimensional formula identifies a natural candidate energy, but its relation to the full path-space EPR, including when that energy is infinite, requires an independent argument.

A separate difficulty is probabilistic.
An integration-by-parts identity on cylinder functions identifies a candidate reversed drift, but does not by itself identify the generator or stochastic equation of the stationary time reversal.
Infinite-dimensional time-reversal results of \citet{FollmerWakolbinger1986} and \citet{MilletNualartSanzInfinite1989} address this issue under locally finite-entropy or analytic hypotheses.
In particular, identifying the reversed dynamics coordinatewise does not automatically yield an $\mH$-valued equation with a Bochner-integrable drift.
In summary, these two difficulties play different roles in the entropy calculation: reversal identification permits a change of measure between the forward and reversed equations, whereas the Cameron--Martin condition determines whether the drift difference can be realized by a finite-energy shift of the noise.

We treat the identification of the stationary reversal and the entropy calculation as separate problems.
Under \cref{asm:stationary-diffusion,asm:cylinder-generator-integrability,asm:covariance-divergence}, an antisymmetric two-point test and the forward It\^o formula give a lower bound for the EPR in terms of the extended Cameron--Martin energy:
\begin{equation}\label{eq:intro-entropy-lower-bound}
    \epr\ge2\int_{\mH}g^2\md\mu,\qquad g(x)=
    \begin{cases}
        \norm{a(x)^{-1/2}\Firr(x)}_{\mH},&\Firr(x)\in\Ran\left(a(x)^{1/2}\right),\\
        +\infty,&\text{otherwise}.
    \end{cases}
\end{equation}
The estimate requires neither a diffusion representation of the stationary reversal nor absolute continuity between the forward and reversed path laws.
Consequently, if the integral is infinite, which means either the range condition fails on a set of positive $\mu$-measure, or the Cameron--Martin energy is not integrable, then we have an infinite EPR.
In the regular regime when the integral is finite, it can be shown that $u=2B^+\Firr$ is the corresponding noise control, and
\begin{equation}\label{eq:intro-finite-energy-identity}
    \mP_{[0,T]}\sim\mP^{\mathrm{rev}}_{[0,T]}, \qquad \epr=2\int_{\mH}g^2\md\mu<+\infty
\end{equation}
when the time-reversal assumption holds, the Girsanov exponential associated with $-u(X_t)$ is a true martingale, and the reversed equation is unique in law from $\mu$.
A sufficient condition involving Lipschitz continuity of the forward or reversed drift and the noise coefficient is provided.

To verify the time-reversal assumption, we establish two criteria based on the abstract integration-by-parts formula of \citet{CCGL2023}.
For a constant diffusion coefficient, we identify the reversed dynamics coordinatewise and characterize the additional path integrability needed for an $\mH$-valued realization.
This integrability follows from finite Cameron--Martin energy.
For a continuous state-dependent diffusion coefficient, we work directly on $\mH$ and extend the invariant-measure integration-by-parts identity beyond cylinder functions.
Under finite Cameron--Martin energy and the stated norm-integrability assumptions, this yields the reversed equation with drift $\bar F=d-F$ and diffusion coefficient $B$.

Together, the lower bound and the finite-energy identity yield verifiable conditions for finite and infinite entropy production.
We illustrate them with two nonlinear infinite-rank diffusions on $\ell^2(\mathbb N;\mathbb R^2)$.
The first has constant covariance and a common radial coefficient coupling infinitely many rotating blocks.
It preserves a Gaussian measure, has globally Lipschitz drift, and, for nontrivial choices of the rotational coefficients, has strictly positive finite EPR.
Moreover, the parameters can be chosen so that $\Firr\in\Ran(a^{1/2})\setminus\Ran(a)$ for $\mu$-a.e. $x$, showing that finite entropy production only requires finite energy in the local Cameron--Martin geometry and need not impose membership in the smaller covariance range.
The second example has state-dependent blockwise covariance and again preserves a Gaussian measure.
For $\theta\in(0,1)$, the irreversible drift lies outside the noise range for $\mu$-almost every state, so the lower bound gives infinite full path-space entropy production.
Nevertheless, every truncation to the first $N$ two-dimensional blocks has finite EPR.
Together, the two examples exhibit two specifically infinite-dimensional effects: the distinction between the covariance and noise ranges, and the failure of finiteness at every fixed finite-dimensional resolution to ensure finite entropy production for the full process.

A closely related work is \citet{DuanZhangZimmer2026}, which derives an entropy-production formula and reversibility criteria relative to a reversible Gaussian Ornstein--Uhlenbeck reference process.
Both approaches identify the relevant cost through Cameron--Martin energy, but the frameworks differ in their assumptions and scope.
Their Assumption~2.7 imposes smoothness, an $\mH$-valued strong realization, and a covariance-range factorization of the nonlinear perturbation.
Our calculus is formulated directly relative to the invariant measure $\mu$, allows state-dependent covariance, and yields an extended-valued lower bound that also detects failure of the noise-range condition or non-integrability of the corresponding energy.
In particular, for the parameter choice above with $\Firr\in\Ran(a^{1/2})\setminus\Ran(a)$, the first infinite-rank example falls outside the covariance-range factorization associated with its natural reversible Ornstein--Uhlenbeck reference.
These are distinctions between the two sets of hypotheses rather than an inclusion of one theory in the other.

The paper is organized as follows.
\Cref{sec:diffusion-framework} introduces the stationary Hilbert-space diffusion, the invariant-measure calculus and reversible--irreversible decomposition, path-space entropy, and Girsanov change of measure.
\Cref{sec:entropy-production-bounds} establishes the extended Cameron--Martin lower bound and proves its sharpness in the regular regime under explicit additional conditions.
\Cref{sec:time-reversal} develops two criteria for identifying the stationary reversed diffusion, for constant and state-dependent diffusion coefficients, respectively, and we have provided two concrete nonlinear infinite-rank examples separately in \cref{sec:consequences-and-examples}.
\Cref{sec:conclusion} discusses the scope of the results and possible extensions.

\section{The infinite-dimensional EPR framework}\label{sec:diffusion-framework}

This section develops the analytic and probabilistic framework used throughout the paper.
We first specify the stationary diffusion and define its path-space entropy production.
We then develop an $L^1(\mu)$ generator calculus that identifies the formal adjoint and the reversible--irreversible decomposition of the drift.
The section concludes with Girsanov's theorem and the corresponding density formula for the induced state-path laws.

\subsection{Stationary diffusions and entropy production}
Let $\mH$ and $\mU$ be real separable Hilbert spaces, and let $(W_t)_{t\ge0}$ be a cylindrical Wiener process on $\mU$.
Throughout the paper, we consider
\begin{equation}\label{eq:stationary-diffusion}
    \md X_t=F(X_t)\md t+B(X_t)\md W_t, \qquad X_t\in\mH,
\end{equation}
where the drift $F\colon\mH\to\mH$ and the diffusion coefficient $B\colon\mH\to\mathcal L_2(\mU,\mH)$ are Borel measurable.
Here $\mathcal L_2(\mU,\mH)$ denotes the space of Hilbert--Schmidt operators from $\mU$ to $\mH$.
We write $a(x)\coloneqq B(x)B(x)^*$ for the diffusion covariance.
We work with a stationary weak solution satisfying the following assumption.
\begin{assumption}\label{asm:stationary-diffusion}
\Cref{eq:stationary-diffusion} admits a global weak solution $X$ that is a $\mu$-stationary, time-homogeneous Markov process with continuous paths, where $\mu$ is a Borel probability measure on $\mH$.
\end{assumption}

Here a weak solution is understood in the probabilistic sense on a filtered probability space $(\Omega,\mathcal F,(\mathcal F_t)_{t\ge0},\mathbb P)$ satisfying the usual conditions.
The process $X$ is adapted, $W$ is a cylindrical Wiener process relative to $(\mathcal F_t)$, and, for every $T>0$,
\begin{equation}\label{eq:weak-solution-integrability}
    \int_0^T\left( \|F(X_s)\|_{\mH} +\|B(X_s)\|_{\mathcal L_2(\mU,\mH)}^2 \right)\md s<+\infty \qquad\mathbb P\text{-a.s.}
\end{equation}
The integral form of \cref{eq:stationary-diffusion} holds in $\mH$, with the drift integral understood in the Bochner sense.

For time horizon $T>0$, let $\Prob_{[0,T]}$ denote the law of $X|_{[0,T]}$ on $C([0,T];\mH)$.
Define the time-reversal map as $\mathfrak r_T\omega\coloneqq(\omega_{T-t})_{0\le t\le T}$ and the reversed path law as $\Prob^{\mathrm{rev}}_{[0,T]}\coloneqq(\mathfrak r_T)_\#\Prob_{[0,T]}$.
When the time horizon is fixed, we suppress the subscript $[0,T]$ if no confusion arises.

To help evaluate the EPR value, it can be shown that for stationary time-homogeneous Markov processes on a Polish space, the path-space relative entropy is exactly linear in the time horizon, and thus the limit is superfluous in the definition of EPR, see, e.g., \citet[Theorem~ 2.2.4]{JiangQianQian2004} and \citet[Theorem~10.4]{varadhan2006large}.

\begin{proposition}\label{prop:entropy-time-linearity}
Under \cref{asm:stationary-diffusion}, there exists a constant $c\in[0,+\infty]$ such that the relative entropy between the forward and time-reversal path measures
\begin{equation}\label{eq:entropy-time-linearity}
    \KLdiv{\Prob_{[0,T]}}{\Prob^{\mathrm{rev}}_{[0,T]}}=cT,\qquad\forall T>0.
\end{equation}
\end{proposition}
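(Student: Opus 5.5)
The plan is to show that $h(T)\coloneqq\KLdiv{\Prob_{[0,T]}}{\Prob^{\mathrm{rev}}_{[0,T]}}$ is additive, $h(T+S)=h(T)+h(S)$, and nondecreasing. Together with $h\ge0$ this forces linearity.

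The key tool is the chain rule for relative entropy on the Polish space $C([0,T+S];\mH)$. We disintegrate both path laws with respect to the restriction to $[0,T]$. Under $\Prob_{[0,T+S]}$, the Markov property at time $T$ makes the conditional law of the future segment on $[T,T+S]$, given the past, depend only on $\omega_T$. It is the law $\Prob^{\omega_T}_{[0,S]}$ of the process started at $\omega_T$, defined through a regular conditional version (disintegration of $\Prob_{[0,S]}$ along $\omega_0$, which exists since the path space is Polish).

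For the reversed law we use that the time reversal of a stationary Markov process is again a stationary Markov process. Its transition kernel is the $\mu$-adjoint kernel, again obtained by disintegration. So under $\Prob^{\mathrm{rev}}_{[0,T+S]}$ the conditional law of the segment on $[T,T+S]$, given the segment on $[0,T]$, also depends only on the value at time $T$. It equals the reversed law started at that point, $\Prob^{\mathrm{rev},x}_{[0,S]}$, namely the disintegration of $\Prob^{\mathrm{rev}}_{[0,S]}$ along $\omega_0$.

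Both joint laws have the same time-$T$ marginal $\mu$. The chain rule then gives
\begin{equation*}
h(T+S)=h(T)+\int_{C([0,T];\mH)}\KLdiv{\Prob^{\omega_T}_{[0,S]}}{\Prob^{\mathrm{rev},\omega_T}_{[0,S]}}\,\Prob_{[0,T]}(\md\omega)
=h(T)+\int_{\mH}\KLdiv{\Prob^{x}_{[0,S]}}{\Prob^{\mathrm{rev},x}_{[0,S]}}\,\mu(\md x).
\end{equation*}
The last integral is $h(S)$ by the chain rule applied once more with respect to $\omega_0$, since both $\Prob_{[0,S]}$ and $\Prob^{\mathrm{rev}}_{[0,S]}$ have initial marginal $\mu$ by stationarity. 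This yields $h(T+S)=h(T)+h(S)$ in $[0,+\infty]$, and since the second summand is nonnegative, $h$ is nondecreasing.

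To conclude, we handle two cases.
\begin{itemize}
\item If $h(T_0)=+\infty$ for some $T_0$, then for every $T>0$ choose $n$ with $nT\ge T_0$. Monotonicity and additivity give $+\infty=h(T_0)\le h(nT)=nh(T)$, so $h(T)=+\infty$ for all $T$ and we take $c=+\infty$.
\item Otherwise $h$ is a finite, nonnegative, additive function on $(0,\infty)$. Being monotone, it is linear: $h(T)=cT$ with $c=h(1)$. Concretely, additivity gives $h(q)=qh(1)$ for rational $q$, and monotonicity squeezes irrational $T$ between rationals.
\end{itemize}

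The main obstacle is the rigorous conditional-independence step for the reversed law, that is, identifying the reversed process's conditional future given its past with a kernel depending only on the present. The cleanest route is to note that for a stationary Markov process, reversing time preserves the Markov property (past and future are conditionally independent given the present, and this is symmetric). The conditional laws of both processes are then regular versions indexed by $x\in\mH$, chosen measurably thanks to the Polish structure of $C([0,S];\mH)$. The chain rule for relative entropy with such disintegrations is standard (cf.\ \citet[Theorem~10.4]{varadhan2006large}); it remains valid when values are infinite because all terms are nonnegative.
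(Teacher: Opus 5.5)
Your proof is correct. Note that the paper gives no argument of its own here: it only cites \citet[Theorem~2.2.4]{JiangQianQian2004} and \citet[Theorem~10.4]{varadhan2006large}. Your route is the standard mechanism behind those results: the Markov property of the forward and reversed laws together with the chain rule on the Polish path space gives $h(T+S)=h(T)+h(S)$ in $[0,+\infty]$, and then monotonicity solves the Cauchy equation, including the all-or-nothing infinite case. What it buys is a self-contained proof that uses only stationarity, the Markov property, and the Polish structure of $C([0,T];\mH)$. That is exactly what the paper implicitly relies on when it appeals to general results for stationary Markov processes on Polish spaces.

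Two steps you use implicitly should be stated.

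First, the first chain-rule term equals $h(T)$ only if the restriction of $\Prob^{\mathrm{rev}}_{[0,T+S]}$ to $[0,T]$ is $\Prob^{\mathrm{rev}}_{[0,T]}$. That restriction is the law of $(X_{S+T-t})_{0\le t\le T}$, and it agrees with $\Prob^{\mathrm{rev}}_{[0,T]}$ by shift invariance of the stationary forward law.

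Second, the obstacle you flag can be settled from the forward process alone, without constructing a reversed Markov kernel. Under the reversed law, the segment on $[T,T+S]$ is $\mathfrak r_S(X|_{[0,S]})$, and the segment on $[0,T]$ is a function of $X|_{[S,S+T]}$. The Markov property at time $S$, extended from finite-dimensional distributions by a monotone-class argument, makes $X|_{[0,S]}$ and $X|_{[S,S+T]}$ conditionally independent given $X_S$. So the required conditional law depends only on $X_S$, and it is the disintegration of $\Prob^{\mathrm{rev}}_{[0,S]}$ along $\omega_0$.

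Your remark that both time-$T$ marginals equal $\mu$ is what makes the integrand well defined $\Prob_{[0,T]}$-a.e., since both kernels are fixed only for $\mu$-a.e.\ starting point.
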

The linearity yields the following general definition of the entropy production rate for stationary Markov processes:
\begin{equation}\label{eq:entropy-production-rate}
    \epr\coloneqq\frac1T\KLdiv{\Prob_{[0,T]}}{\Prob^{\mathrm{rev}}_{[0,T]}},
\end{equation}
which is independent of $T$ and agrees with the long-time definition.

Our aim is to characterize the finiteness of $\epr$ and relate its value to the drift, the invariant measure, and the geometry of the noise.
We now introduce the generator framework needed for this analysis.
We use smooth bounded cylinder functions as the basic test class.
\begin{definition}
The class of smooth bounded cylinder functions is defined as
\begin{equation}
    \begin{aligned}
        \FC\coloneqq\bigcup_{n\ge1}\bigl\{\varphi\colon\mH\to\R\mid\varphi(x)
        =f\left(\ip{x}{h_1}_\mH,\ldots,\ip{x}{h_n}_\mH\right)\;\text{for}\;f\in C_b^\infty(\R^n),\, h_1,\ldots,h_n\in\mH\bigr\}.
    \end{aligned}
\end{equation}
Here $C_b^\infty(\R^n)$ consists of bounded smooth functions with bounded derivatives of every order.
\end{definition}

Let $(P_t)_{t\ge0}$ denote the transition semigroup of the stationary process $X$.
Since $\mH$ is Polish, we may choose Borel versions of the transition kernels and write $(P_tu)(x)\coloneqq\E[u(X_t)\mid X_0=x]$ for any bounded Borel $u$ with the identity understood $\mu$-almost everywhere.
On the test-function class $\FC$, we define the differential operator
\begin{equation}\label{eq:cylinder-generator}
    L\varphi\coloneqq \ip{F}{D\varphi}_{\mH} +\frac12\Tr(aD^2\varphi), \qquad\forall\varphi\in\FC.
\end{equation}
Here $D\varphi$ and $D^2\varphi$ denote the Fr\'echet gradient and Hessian of $\varphi$, respectively.
The trace operation is well defined since $a(x)$ is trace class and $D^2\varphi$ is a bounded finite-rank operator.

\begin{assumption}\label{asm:cylinder-generator-integrability}
The operator $L$ satisfies
\begin{equation}
    L\varphi\in L^1(\mu),\quad\forall\varphi\in\FC.
\end{equation}
\end{assumption}

The following equivalent formulation will be useful when verifying \cref{asm:cylinder-generator-integrability} in applications.

\begin{lemma}\label{lem:generator-integrability-characterization}
\Cref{asm:cylinder-generator-integrability} is equivalent to
\begin{equation}\label{eq:generator-integrability-characterization}
    \ip{F}{h}_\mH\in L^1(\mu),\quad\forall h\in\mH;\qquad\ip{ah_1}{h_2}_\mH\in L^1(\mu),\quad\forall h_1,h_2\in\mH.
\end{equation}
\end{lemma}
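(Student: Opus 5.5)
For a cylinder function $\varphi(x)=f(\ip{x}{h_1}_\mH,\ldots,\ip{x}{h_n}_\mH)$ we compute $L\varphi$ explicitly and compare both directions with this formula. The derivatives are
\[
D\varphi(x)=\sum_{i=1}^n\partial_if(\cdot)\,h_i,\qquad D^2\varphi(x)=\sum_{i,j=1}^n\partial_{ij}f(\cdot)\,h_i\otimes h_j .
\]
Since $\Tr(a\,h_i\otimes h_j)=\ip{ah_j}{h_i}_\mH$, this gives
\[
L\varphi=\sum_{i}\partial_if(\cdot)\ip{F}{h_i}_\mH+\tfrac12\sum_{i,j}\partial_{ij}f(\cdot)\ip{ah_j}{h_i}_\mH .
\]

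\emph{Sufficiency.} Assume \cref{eq:generator-integrability-characterization}. The coefficients $\partial_if$ and $\partial_{ij}f$ are bounded because $f\in C_b^\infty(\R^n)$. Hence $L\varphi$ is a finite sum of bounded functions times $L^1(\mu)$ functions, so $L\varphi\in L^1(\mu)$.

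\emph{Necessity.} Assume \cref{asm:cylinder-generator-integrability}, and isolate each coefficient by choosing suitable test functions.

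\textbf{Drift term.} Fix $h\in\mH$. Take $\varphi(x)=\sin\ip{x}{h}$ and $\psi(x)=\cos\ip{x}{h}$, both in $\FC$. Writing $s=\ip{x}{h}$ and $q=\ip{ah}{h}\ge0$,
\[
L\varphi=\cos s\,\ip{F}{h}-\tfrac12\sin s\,q,\qquad L\psi=-\sin s\,\ip{F}{h}-\tfrac12\cos s\,q .
\]
A trigonometric combination separates the two coefficients:
\[
\cos s\,L\varphi-\sin s\,L\psi=\ip{F}{h},\qquad -\sin s\,L\varphi-\cos s\,L\psi=\tfrac12 q .
\]
Both right-hand sides are bounded multiples of $L^1(\mu)$ functions, so $\ip{F}{h}_\mH\in L^1(\mu)$ and $\ip{ah}{h}_\mH\in L^1(\mu)$.

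\textbf{Off-diagonal covariance term.} Because $a(x)$ is self-adjoint,
\[
\ip{ah_1}{h_2}=\tfrac14\bigl(\ip{a(h_1+h_2)}{h_1+h_2}-\ip{a(h_1-h_2)}{h_1-h_2}\bigr).
\]
The diagonal case, applied to $h_1\pm h_2$, then gives $\ip{ah_1}{h_2}_\mH\in L^1(\mu)$ for all $h_1,h_2\in\mH$.

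The only point needing care is necessity. The tests must be bounded with bounded derivatives, so plain linear or quadratic functions are not admissible. Bounded tests produce the coefficients only multiplied by oscillating factors. The $\sin/\cos$ pair removes those factors exactly, because $\sin^2+\cos^2=1$ is exact rather than an inequality.
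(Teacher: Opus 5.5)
Your proposal is correct and follows essentially the same route as the paper: the same $\sin/\cos$ combinations isolate $\ip{F}{h}_\mH$ and $\ip{ah}{h}_\mH$, polarization handles the off-diagonal covariance terms, and the converse uses the bounded coefficients $\partial_i f$, $\partial_{ij}f$.
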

\begin{proof}
For any $h\in\mH$, we write $x_h\coloneqq\ip{x}{h}_{\mH}$.
By \cref{asm:cylinder-generator-integrability} we have $L(\sin x_h),L(\cos x_h)\in L^1(\mu)$, and a direct computation yields
\begin{equation}
    \begin{aligned}
        \ip{F}{h}_\mH&=(\cos x_h)L(\sin x_h)-(\sin x_h)L(\cos x_h)\in L^1(\mu),\\
        \ip{ah}{h}_\mH&=-2(\sin x_h)L(\sin x_h)-2(\cos x_h)L(\cos x_h)\in L^1(\mu).
    \end{aligned}
\end{equation}
The integrability for $\ip{ah_1}{h_2}_\mH$ follows by polarization.
Conversely, if \cref{eq:generator-integrability-characterization} holds, $L\varphi\in L^1(\mu)$ since it can be decomposed into a finite sum of inner products in $L^1(\mu)$ with bounded coefficients.
\end{proof}

Note that these coordinatewise conditions do not require $F\in L^1(\mu;\mH)$ or $B\in L^2(\mu;\HS(\mU,\mH))$.
Under stationarity, these stronger conditions are sufficient for both \cref{eq:weak-solution-integrability,asm:cylinder-generator-integrability}.
They will be discussed later when needed.

We next identify the differential expression \cref{eq:cylinder-generator} with the $L^1(\mu)$ generator of the stationary Markov semigroup.

\begin{proposition}\label{prop:forward-generator-identification}
Under \cref{asm:stationary-diffusion,asm:cylinder-generator-integrability}, $(P_t)_{t\ge0}$ extends to a strongly continuous contraction semigroup on $L^1(\mu)$.
Denote its generator by $\tilde L$.
Then $\tilde L$ is well defined on $\FC$ and
\begin{equation}\label{eq:forward-generator-identification}
    \tilde{L}\varphi=L\varphi,\qquad\forall\varphi\in\FC,
\end{equation}
where the cylinder functions are identified modulo $\mu$-null sets.
Moreover, for every $t\ge0$ and $\varphi\in\FC$,
\begin{equation}\label{eq:forward-semigroup-integral}
    P_t\varphi-\varphi =\int_0^tP_s(L\varphi)\md s \qquad\text{in }L^1(\mu),
\end{equation}
where the time integral is a Bochner integral in $L^1(\mu)$.
\end{proposition}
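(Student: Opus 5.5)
The proof has three steps: extend the semigroup to $L^1(\mu)$, derive the integral identity \cref{eq:forward-semigroup-integral} from It\^o's formula, and read off the generator from it.

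\textbf{Step 1: extension to $L^1(\mu)$.} For bounded Borel $u$, stationarity and Jensen give
\[
\int|P_tu|\,\md\mu\le\int P_t|u|\,\md\mu=\int|u|\,\md\mu .
\]
Hence $P_t$ extends by density to a contraction on $L^1(\mu)$. The semigroup property follows from the Markov property. For strong continuity it suffices to check $P_t\varphi\to\varphi$ in $L^1(\mu)$ for $\varphi$ in a dense class, and bounded Lipschitz cylinder functions suffice. For such $\varphi$, continuity of paths and dominated convergence give
\[
\|P_t\varphi-\varphi\|_{L^1(\mu)}\le \E_\mu|\varphi(X_t)-\varphi(X_0)|\to0 .
\]
Density of $\FC$ in $L^1(\mu)$ holds because $\FC$ generates the Borel $\sigma$-algebra of the separable space $\mH$ and is closed under multiplication, so a monotone class argument applies.

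\textbf{Step 2: the integral identity via It\^o's formula.} Fix $\varphi(x)=f(\ip{x}{h_1},\dots,\ip{x}{h_n})$. The finite-dimensional process $Y_t=(\ip{X_t}{h_i})_{i\le n}$ is a real continuous semimartingale:
\[
\md Y^i_t=\ip{F(X_t)}{h_i}\,\md t+\ip{B(X_t)\,\md W_t}{h_i},
\qquad
\md\langle Y^i,Y^j\rangle_t=\ip{a(X_t)h_i}{h_j}\,\md t .
\]
Finite-dimensional It\^o's formula then gives
\[
\varphi(X_t)-\varphi(X_0)=\int_0^tL\varphi(X_s)\,\md s+M_t ,
\qquad
M_t=\sum_i\int_0^t\partial_if(Y_s)\ip{B(X_s)\,\md W_s}{h_i}.
\]
$M$ is a local martingale. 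Its quadratic variation has expectation $\int_0^t\E\,\ip{a(X_s)D\varphi}{D\varphi}\,\md s=t\int\ip{aD\varphi}{D\varphi}\,\md\mu$. This is finite by \cref{lem:generator-integrability-characterization} and the boundedness of $\partial_if$, so $M$ is a true martingale.

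Next condition on $X_0=x$, using a regular conditional distribution. For $\mu$-a.e. $x$, $\E_x\int_0^t|L\varphi(X_s)|\,\md s<\infty$, because its $\mu$-integral equals $t\|L\varphi\|_{L^1(\mu)}$ by stationarity. Applying Fubini then gives, for $\mu$-a.e. $x$,
\[
P_t\varphi(x)-\varphi(x)=\int_0^tP_s(L\varphi)(x)\,\md s .
\]
It remains to identify this pointwise integral with the $L^1(\mu)$-valued Bochner integral. The map $s\mapsto P_s(L\varphi)$ is continuous in $L^1(\mu)$ by Step 1, and $(s,x)\mapsto P_s(L\varphi)(x)$ is jointly measurable. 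A standard argument (pairing with $L^\infty(\mu)$ and applying Fubini) shows the two integrals agree, which gives \cref{eq:forward-semigroup-integral}.

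\textbf{Step 3: reading off the generator.} Divide \cref{eq:forward-semigroup-integral} by $t$ and use strong continuity of $s\mapsto P_s(L\varphi)$ at $s=0$ to obtain
\[
\frac{P_t\varphi-\varphi}{t}=\frac1t\int_0^tP_s(L\varphi)\,\md s\to L\varphi \quad\text{in } L^1(\mu).
\]
Hence $\varphi\in D(\tilde L)$ and $\tilde L\varphi=L\varphi$.

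\textbf{Main obstacle.} I expect the delicate step to be Step 2. The martingale property and Fubini must rely only on the coordinatewise integrability of \cref{asm:cylinder-generator-integrability}, not on $F\in L^1(\mu;\mH)$. The conditioning on $X_0$ must also be handled so that the Borel versions of $P_t$ are used consistently. The plan avoids any Hilbert-space It\^o formula by projecting onto the finitely many directions $h_i$, which is exactly why \cref{lem:generator-integrability-characterization} suffices.
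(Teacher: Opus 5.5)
Your proposal is correct and follows essentially the same route as the paper. Both arguments use stationarity plus Jensen for the $L^1(\mu)$ contraction, path continuity and dominated convergence on a dense class for strong continuity, and It\^o's formula with a true-martingale argument followed by conditional Fubini for \cref{eq:forward-semigroup-integral}. Both then read off the generator from strong continuity.

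The differences are minor:
\begin{itemize}
\item You get the true-martingale property from square integrability, via \cref{lem:generator-integrability-characterization}. The paper instead uses the bound $\E\sup_{t\le T}|M_t^\varphi|\le2\|\varphi\|_\infty+T\|L\varphi\|_{L^1(\mu)}$.
\item You use $\FC$, via a monotone class argument, as the dense class, where the paper uses $C_b(\mH)$.
\item You spell out the identification of the pointwise time integral with the Bochner integral, which the paper leaves implicit.
\end{itemize}
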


\begin{proof}
Stationarity and conditional Jensen give $\|P_tv\|_{L^1(\mu)}\le\|v\|_{L^1(\mu)}$ for bounded Borel $v$, so $(P_t)$ extends to a contraction semigroup on $L^1(\mu)$.
For $v\in C_b(\mH)$, path continuity and dominated convergence yield
\begin{equation}\label{eq:forward-semigroup-continuity}
    \|P_tv-v\|_{L^1(\mu)} \le\E\bigl[|v(X_t)-v(X_0)|\bigr] \to0 \quad\text{as}\quad t\to0^+.
\end{equation}
Since $C_b(\mH)$ is dense in $L^1(\mu)$, the contraction property leads to the strong continuity of $(P_t)_{t\ge0}$ on $L^1(\mu)$.

Fix $\varphi\in\FC$.
By It\^o's formula, the process
\begin{equation}\label{eq:cylinder-function-martingale}
    M_t^\varphi\coloneqq\varphi(X_t)-\varphi(X_0)-\int_0^tL\varphi(X_s)\md s=\int_0^t\ip{B(X_s)^*D\varphi(X_s)}{\md W_s}_\mU
\end{equation}
is a continuous local martingale.
For every $T>0$, stationarity and \cref{asm:cylinder-generator-integrability} give an upper bound
\begin{equation}\label{eq:cylinder-martingale-integrability}
    \E\sup_{0\le t\le T}\left|M_t^\varphi\right| \le2\|\varphi\|_\infty +\E\int_0^T|L\varphi(X_s)|\md s =2\|\varphi\|_\infty+T\|L\varphi\|_{L^1(\mu)} <+\infty,
\end{equation}
and thus $M^\varphi$ is a true martingale on $[0,T]$.
Taking conditional expectations given $X_0$ and applying conditional Fubini's theorem to \cref{eq:cylinder-function-martingale} yields \cref{eq:forward-semigroup-integral}.
Finally, strong continuity gives
\begin{equation}\label{eq:forward-generator-limit}
    \left\| \frac{P_t\varphi-\varphi}{t}-L\varphi \right\|_{L^1(\mu)} \le\frac1t\int_0^t \norm{P_s(L\varphi)-L\varphi}_{L^1(\mu)}\md s \to0 \quad\text{as}\quad t\to0^+.
\end{equation}
This proves \cref{eq:forward-generator-identification} by the definition of the $L^1(\mu)$ generator.
\end{proof}

Therefore, in what follows, we retain $L$ for the differential operator on $\FC$, where it agrees with the forward generator $\tilde L$.

\subsection{Invariant-measure calculus and formal adjoint}\label{sec:invariant-measure-calculus}
We now use stationarity and integration by parts with respect to $\mu$ to identify the formal adjoint of $L$ on $\FC$.
The resulting operator has the same diffusion covariance as $L$, with a drift determined by the divergence of the covariance directions.

To start with, we define the carr\'e du champ $\Gamma(\phi,\psi)\coloneqq\ip{aD\phi}{D\psi}_\mH$ for any $\phi,\psi\in\FC$.
\begin{proposition}\label{prop:stationary-generator-calculus}
We have the identity
\begin{equation}\label{eq:generator-product-rule}
    L(\phi\psi)=\phi L\psi+\psi L\phi+\Gamma(\phi,\psi),\qquad\forall\phi,\psi\in\FC.
\end{equation}
Furthermore, under \cref{asm:stationary-diffusion,asm:cylinder-generator-integrability}, we have
\begin{equation}\label{eq:stationary-generator-mean}
    \int_\mH L\varphi\md\mu=0,\qquad\forall\varphi\in\FC,
\end{equation}
and it follows that
\begin{equation}\label{eq:carre-du-champ-integrability}
    \int_\mH\Gamma(\varphi,\varphi)\md\mu =-2\int_\mH\varphi L\varphi\md\mu \le 2\norm{\varphi}_\infty\norm{L\varphi}_{L^1(\mu)}<+\infty,\quad\forall\varphi\in\FC.
\end{equation}
Consequently, $\Gamma(\phi,\psi)\in L^1(\mu)$ for every $\phi,\psi\in\FC$.
\end{proposition}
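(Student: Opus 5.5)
The plan is to prove the three claims in order: the product rule, the stationary mean identity, and then the carré du champ bound. Each step uses only the previous ones together with \cref{prop:forward-generator-identification}.

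For \cref{eq:generator-product-rule}, I will compute pointwise. Since $\FC$ is an algebra (any two cylinder functions can be written over a common finite family $h_1,\dots,h_n$), $\phi\psi\in\FC$. The Leibniz rules give $D(\phi\psi)=\phi D\psi+\psi D\phi$ and
\begin{equation*}
D^2(\phi\psi)=\phi D^2\psi+\psi D^2\phi+D\phi\otimes D\psi+D\psi\otimes D\phi .
\end{equation*}
Substituting into \cref{eq:cylinder-generator} and using $\Tr\bigl(a(D\phi\otimes D\psi)\bigr)=\ip{aD\psi}{D\phi}_\mH$, together with the symmetry of $a$, the two cross terms contribute $\tfrac12\cdot 2\,\Gamma(\phi,\psi)$. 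This gives the identity for every $x$. No integrability is needed at this step.

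For \cref{eq:stationary-generator-mean}, I will integrate \cref{eq:forward-semigroup-integral} against $\mu$. Stationarity gives $\int P_t\varphi\,\md\mu=\int\varphi\,\md\mu$ and $\int P_s(L\varphi)\,\md\mu=\int L\varphi\,\md\mu$; the latter holds for $L^1(\mu)$ functions by density, since $v\mapsto\int v\,\md\mu$ is a bounded functional that commutes with the Bochner integral. Hence $0=t\int L\varphi\,\md\mu$ for $t>0$. An equivalent route is to take expectations in the true martingale $M^\varphi$ from \cref{eq:cylinder-function-martingale} and apply Fubini under stationarity.

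For \cref{eq:carre-du-champ-integrability}, I will apply the product rule with $\phi=\psi=\varphi$, which gives $\Gamma(\varphi,\varphi)=L(\varphi^2)-2\varphi L\varphi$. Since $\varphi^2\in\FC$, \cref{eq:stationary-generator-mean} yields $\int L(\varphi^2)\,\md\mu=0$. The function $\varphi L\varphi$ is integrable because $\varphi$ is bounded and $L\varphi\in L^1(\mu)$. So $\Gamma(\varphi,\varphi)$ is integrable, its integral equals $-2\int\varphi L\varphi\,\md\mu$, and Hölder's inequality gives the stated bound. Note that $\Gamma(\varphi,\varphi)\ge0$ because $a\ge0$. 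Finally, $\Gamma$ is a symmetric nonnegative bilinear form, so the pointwise Cauchy--Schwarz inequality $|\Gamma(\phi,\psi)|\le\Gamma(\phi,\phi)^{1/2}\Gamma(\psi,\psi)^{1/2}\le\tfrac12(\Gamma(\phi,\phi)+\Gamma(\psi,\psi))$ gives $\Gamma(\phi,\psi)\in L^1(\mu)$; polarization would also work.

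There is no real obstacle here. The only point needing care is justifying the interchange of $\int\cdot\,\md\mu$ with the $L^1$ Bochner integral, which follows from continuity of the linear functional.
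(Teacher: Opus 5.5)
Your proposal is correct and follows essentially the same route as the paper. The product rule is checked by direct Leibniz computation, the mean-zero identity comes from integrating \cref{eq:forward-semigroup-integral} against the invariant measure $\mu$, and the bound comes from integrating the product rule with $\phi=\psi=\varphi$. Your pointwise Cauchy--Schwarz argument for $\Gamma(\phi,\psi)\in L^1(\mu)$ is an equally valid substitute for the paper's polarization step.
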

\begin{proof}
The identity \cref{eq:generator-product-rule} can be checked by direct calculation.
Fix $t>0$.
Integrating \cref{eq:forward-semigroup-integral} against $\mu$ and using invariance yields
\begin{equation}
    0 =\int_{\mH}(P_t\varphi-\varphi)\md\mu =\int_0^t\int_{\mH}P_s(L\varphi)\md\mu\md s =t\int_{\mH}L\varphi\md\mu,
\end{equation}
which proves \cref{eq:stationary-generator-mean}.
Integrating \cref{eq:generator-product-rule} with $\psi=\phi=\varphi$ and using stationarity implies \cref{eq:carre-du-champ-integrability}.
Finally, the integrability of $\Gamma(\phi,\psi)$ follows by the polarization identity.
\end{proof}

\begin{definition}\label{def:invariant-measure-divergence}
A Borel measurable map $V\colon\mH\to\mH$ admits a $\mu$-divergence if
\begin{equation}
    \ip{V}{D\varphi}_\mH\in L^1(\mu),\qquad\forall\varphi\in\FC,
\end{equation}
and there exists $\divg_\mu V\in L^1(\mu)$ satisfying
\begin{equation}\label{eq:divergence-integration-by-parts}
    \int_\mH\ip{V}{D\varphi}_\mH\md\mu =-\int_\mH(\divg_\mu V)\varphi\md\mu, \qquad\forall\varphi\in\FC.
\end{equation}
\end{definition}
To represent the divergences of the covariance directions by a single $\mH$-valued field, we impose the following assumption.
\begin{assumption}\label{asm:covariance-divergence}
There exists a Borel measurable map $d\colon\mH\to\mH$ such that, for any $h\in\mH$, the mapping $x\mapsto a(x)h$ admits a $\mu$-divergence and
\begin{equation}\label{eq:covariance-direction-divergence}
    \ip{d}{h}_\mH=\divg_\mu(ah)\in L^1(\mu).
\end{equation}
Equivalently, for any $h\in\mH$, we have $\ip{d}{h}_\mH\in L^1(\mu)$ and
\begin{equation}
    \int_\mH\ip{d}{h}_\mH\varphi\md\mu=-\int_\mH\ip{ah}{D\varphi}_\mH\md\mu,\qquad\forall\varphi\in\FC.
\end{equation}
\end{assumption}
\Cref{asm:covariance-divergence} requires the divergences of all covariance directions $ah$ to be represented by a single $\mH$-valued field $d$.
In finite dimensions, for a smooth positive invariant density $\rho^\fin$ and a smooth covariance $a^\fin$, the corresponding identity reads $d^\fin=(\rho^\fin)^{-1}\nabla\cdot(\rho^\fin a^\fin)$.
\Cref{asm:covariance-divergence} formulates this integration-by-parts structure directly with respect to $\mu$, without requiring an ambient density.
\begin{proposition}
For any Borel measurable map $V:\mH\to\mH$ with a $\mu$-divergence, we have
\begin{equation}\label{eq:divergence-product-rule}
    \divg_\mu(\varphi V)=\varphi\divg_\mu V+\ip{V}{D\varphi}_\mH,\qquad\forall\varphi\in\FC.
\end{equation}
Moreover, with \cref{asm:covariance-divergence},
\begin{equation}\label{eq:covariance-gradient-divergence}
    \divg_\mu(aD\varphi) =\ip{d}{D\varphi}_\mH+\Tr(aD^2\varphi),\qquad\forall\varphi\in\FC.
\end{equation}
\end{proposition}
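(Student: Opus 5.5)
The plan is to verify both identities directly from \cref{def:invariant-measure-divergence}. Since $\FC$ is an algebra, we test against an arbitrary $\psi\in\FC$ and use that $\varphi\psi\in\FC$.

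For \cref{eq:divergence-product-rule}, fix $\varphi,\psi\in\FC$. The first step is to check the integrability requirements in \cref{def:invariant-measure-divergence} for the field $\varphi V$. By the product rule, $D(\varphi\psi)=\varphi D\psi+\psi D\varphi$, so
\begin{equation*}
    \ip{\varphi V}{D\psi}_\mH=\ip{V}{D(\varphi\psi)}_\mH-\psi\ip{V}{D\varphi}_\mH .
\end{equation*}
Both terms on the right lie in $L^1(\mu)$, because $\varphi\psi\in\FC$ and $\psi$ is bounded. The candidate divergence $\varphi\divg_\mu V+\ip{V}{D\varphi}_\mH$ is also in $L^1(\mu)$, since $\varphi$ is bounded. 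Integrating the display against $\mu$ and applying \cref{eq:divergence-integration-by-parts} with test function $\varphi\psi$ gives
\begin{equation*}
    \int_\mH\ip{\varphi V}{D\psi}_\mH\md\mu=-\int_\mH\bigl(\varphi\divg_\mu V+\ip{V}{D\varphi}_\mH\bigr)\psi\md\mu .
\end{equation*}
This is exactly the defining identity. Uniqueness of the divergence as an element of $L^1(\mu)$ holds because $\FC$ separates $\mu$-a.e. classes: it is measure-determining, and bounded-pointwise-dense approximations extend the identity to bounded Borel functions. With uniqueness, \cref{eq:divergence-product-rule} holds as an identity in $L^1(\mu)$.

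For \cref{eq:covariance-gradient-divergence}, write $\varphi=f(\ip{x}{h_1}_\mH,\dots,\ip{x}{h_n}_\mH)$, so that $D\varphi=\sum_i\partial_if\,h_i$. The partial derivatives $\partial_i f(\ip{x}{h_1},\dots)$ are themselves in $\FC$. By linearity, $aD\varphi=\sum_i(\partial_if)\,(ah_i)$. Each $ah_i$ has a $\mu$-divergence $\ip{d}{h_i}_\mH$ by \cref{asm:covariance-divergence}, so the product rule just proved gives
\begin{equation*}
    \divg_\mu(aD\varphi)=\sum_i\partial_if\,\ip{d}{h_i}_\mH+\sum_i\ip{ah_i}{D(\partial_if)}_\mH .
\end{equation*}
Here we also use that a finite sum of fields with $\mu$-divergences has a $\mu$-divergence equal to the sum. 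The first sum equals $\ip{d}{D\varphi}_\mH$. For the second, $D(\partial_if)=\sum_j\partial_{ij}f\,h_j$ and $D^2\varphi=\sum_{i,j}\partial_{ij}f\,h_j\otimes h_i$. Hence the second sum equals $\sum_{i,j}\partial_{ij}f\ip{ah_i}{h_j}_\mH=\Tr(aD^2\varphi)$, using self-adjointness of $a$ and the finite-rank trace formula.

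The only delicate point is the uniqueness of $\divg_\mu$, that is, that $\int g\psi\md\mu=0$ for all $\psi\in\FC$ forces $g=0$ $\mu$-a.e. for $g\in L^1(\mu)$. I would justify it as follows. $\FC$ contains functions of the form $\prod_k\cos$ and $\prod_k\sin$ of finitely many coordinates, so it determines the finite signed measure $g\,\md\mu$ through its characteristic functional. On a separable Hilbert space, the cylinder $\sigma$-algebra equals the Borel $\sigma$-algebra, so this signed measure vanishes and $g=0$ $\mu$-a.e. Everything else is routine bookkeeping of integrability.
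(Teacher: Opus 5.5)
Your proof is correct and follows the paper's argument: you test $\varphi V$ against $\psi\in\FC$ by rewriting $\ip{\varphi V}{D\psi}_\mH$ through $D(\varphi\psi)$, then expand $aD\varphi=\sum_i(\partial_if)\,ah_i$ and apply the product rule to each term. Your additional checks are sound and make explicit what the paper uses tacitly: the integrability bookkeeping, additivity of $\divg_\mu$ over finite sums, and uniqueness of $\divg_\mu$ in $L^1(\mu)$ via the characteristic functional.
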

\begin{proof}
For any $\phi,\psi\in\FC$,
\begin{equation}
    \begin{aligned}
        \int_\mH\ip{\phi V}{D\psi}_\mH\md\mu&=\int_\mH\ip{V}{D(\phi\psi)}_\mH\md\mu-\int_\mH\psi\ip{V}{D\phi}_\mH\md\mu
        =-\int_\mH\psi\left(\phi\divg_\mu V+\ip{V}{D\phi}_\mH\right)\md\mu,
    \end{aligned}
\end{equation}
which implies \cref{eq:divergence-product-rule}.
For $\varphi=f(\ip{\cdot}{h_1}_\mH,\cdots,\ip{\cdot}{h_n}_\mH)\in\FC$, we have $aD\varphi=\sum_i(\partial_if)ah_i$, and it follows that
\begin{equation}
    \begin{aligned}
        \divg_\mu(aD\varphi)&=\sum_i\left[(\partial_if)\divg_\mu(ah_i)+\ip{ah_i}{D(\partial_if)}_\mH\right]\\
        &=\sum_i\left[(\partial_if)\ip{d}{h_i}_\mH+\sum_j\ip{ah_i}{(\partial_{ij}f)h_j}_\mH\right]\\
        &=\ip{d}{D\varphi}_\mH+\Tr(aD^2\varphi)
    \end{aligned}
\end{equation}
by applying \cref{eq:divergence-product-rule} to each summand.
\end{proof}

We can now identify the formal adjoint of the cylinder operator $L$ with respect to $\mu$.
\begin{proposition}
Under \cref{asm:stationary-diffusion,asm:cylinder-generator-integrability,asm:covariance-divergence}, define
\begin{equation}\label{eq:formal-adjoint-generator}
    {\bar L}\varphi\coloneqq\divg_\mu (aD\varphi)-L\varphi=\ip{d-F}{D\varphi}_\mH+\frac12\Tr\left(aD^2\varphi\right),\quad\forall\varphi\in\FC.
\end{equation}
Then ${\bar L}\varphi\in L^1(\mu)$ and
\begin{equation}\label{eq:generator-adjoint-duality}
    \int_\mH\psi(L\phi)\md\mu=\int_\mH\phi({\bar L}\psi)\md\mu,\quad\forall\phi,\psi\in\FC.
\end{equation}
\end{proposition}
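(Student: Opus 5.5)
First I will verify the second expression in \cref{eq:formal-adjoint-generator}. By \cref{eq:covariance-gradient-divergence}, $\divg_\mu(aD\varphi)=\ip{d}{D\varphi}_\mH+\Tr(aD^2\varphi)$. Subtracting $L\varphi=\ip{F}{D\varphi}_\mH+\tfrac12\Tr(aD^2\varphi)$ gives $\ip{d-F}{D\varphi}_\mH+\tfrac12\Tr(aD^2\varphi)$.

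For integrability, write $\varphi=f(\ip{\cdot}{h_1}_\mH,\ldots,\ip{\cdot}{h_n}_\mH)$. Then $\ip{d}{D\varphi}_\mH=\sum_i(\partial_if)\ip{d}{h_i}_\mH$, and each summand is a bounded coefficient times an $L^1(\mu)$ function by \cref{asm:covariance-divergence}. The term $L\varphi$ lies in $L^1(\mu)$ by \cref{asm:cylinder-generator-integrability}. The trace term $\Tr(aD^2\varphi)=\sum_{i,j}(\partial_{ij}f)\ip{ah_i}{h_j}_\mH$ lies in $L^1(\mu)$ by \cref{lem:generator-integrability-characterization}. Hence $\bar L\varphi\in L^1(\mu)$.

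For the duality \cref{eq:generator-adjoint-duality}, I will start from the product rule \cref{eq:generator-product-rule} and the zero-mean identity \cref{eq:stationary-generator-mean} applied to $\phi\psi\in\FC$. Together they give
\begin{equation*}
\int_\mH\psi L\phi\,\md\mu=-\int_\mH\phi L\psi\,\md\mu-\int_\mH\Gamma(\phi,\psi)\,\md\mu,
\end{equation*}
where every term is integrable by \cref{prop:stationary-generator-calculus}. Next I will rewrite the carré du champ term using the divergence. Since $aD\psi=\sum_j(\partial_jg)\,ah_j$ is a finite combination, \cref{eq:divergence-product-rule} shows that $aD\psi$ admits a $\mu$-divergence. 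Its integrability requirement holds because $\ip{aD\psi}{D\phi}_\mH=\Gamma(\psi,\phi)\in L^1(\mu)$, and its divergence is given by \cref{eq:covariance-gradient-divergence}. Applying \cref{eq:divergence-integration-by-parts} with $V=aD\psi$ and test function $\phi$ then yields
\begin{equation*}
-\int_\mH\Gamma(\phi,\psi)\,\md\mu=\int_\mH\phi\,\divg_\mu(aD\psi)\,\md\mu.
\end{equation*}
Substituting this into the previous identity gives $\int\psi L\phi\,\md\mu=\int\phi(\divg_\mu(aD\psi)-L\psi)\,\md\mu=\int\phi\bar L\psi\,\md\mu$.

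The only step needing care is justifying that $aD\psi$ genuinely satisfies \cref{def:invariant-measure-divergence}. This means checking that the $L^1$ condition holds for all test functions and that the product rule legitimately extends from the directions $ah_j$ to their $\FC$-weighted sums. Both follow from linearity of the integration-by-parts identity together with \cref{eq:divergence-product-rule} applied termwise, since $\ip{ah_j}{D\phi}_\mH$ is integrable by \cref{lem:generator-integrability-characterization}. All the remaining steps are routine bookkeeping.
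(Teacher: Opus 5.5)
Your proposal is correct and follows essentially the same route as the paper: integrability of $\bar L\varphi$ from $L\varphi\in L^1(\mu)$ together with $\divg_\mu(aD\varphi)\in L^1(\mu)$, and the duality from the product rule \cref{eq:generator-product-rule}, the zero-mean identity applied to $L(\phi\psi)$, and the $\mu$-divergence integration by parts for $V=aD\psi$. Your extra check that $aD\psi$ genuinely admits a $\mu$-divergence is exactly what the paper packages into \cref{eq:covariance-gradient-divergence}, so it is a correct expansion of a step the paper leaves implicit.
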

\begin{proof}
Since $\divg_\mu(aD\varphi)\in L^1(\mu)$ by \cref{eq:covariance-gradient-divergence}, the integrability of ${\bar L}\varphi$ follows from \cref{asm:cylinder-generator-integrability} and \cref{eq:formal-adjoint-generator}.
By stationarity and \cref{eq:generator-product-rule}, we have
\begin{equation}
    \int_\mH\psi(L\phi)\md\mu=-\int_\mH\Gamma(\phi,\psi)\md\mu-\int_\mH\phi L\psi\md\mu=\int_\mH\phi\left(\divg_\mu (aD\psi)-L\psi\right)\md\mu,\quad\forall\psi,\phi\in\FC,
\end{equation}
which verifies \cref{eq:generator-adjoint-duality} by \cref{eq:formal-adjoint-generator}.
\end{proof}
The term ``formal adjoint'' refers only to the duality \cref{eq:generator-adjoint-duality} on the cylinder class $\FC$.
The identity does not alone identify ${\bar L}$ with the $L^1(\mu)$ generator of the stationary reversed semigroup, nor does it establish a reversed stochastic equation.
These questions will be addressed separately in \cref{sec:time-reversal}.

\subsection{Reversible--irreversible decomposition}\label{sec:drift-decomposition}
The formal adjoint separates $L$ into formally symmetric and antisymmetric parts with respect to $\mu$ on $\FC$.
This induces the corresponding reversible--irreversible decomposition of the drift.

\begin{proposition}\label{prop:reversible-irreversible-decomposition}
Under \cref{asm:stationary-diffusion,asm:cylinder-generator-integrability,asm:covariance-divergence}, we set
\begin{equation}\label{eq:drift-decomposition}
    \Frev\coloneqq d/2,\qquad\Firr\coloneqq F-d/2
\end{equation}
and define
\begin{equation}\label{eq:reversible-irreversible-generators}
    \begin{aligned}
        L_\rev\varphi&\coloneqq\frac12(L+{\bar L})\varphi=\frac12\ip{d}{D\varphi}_\mH+\frac12\Tr(aD^2\varphi),\\
        L_\irr\varphi&\coloneqq\frac12(L-{\bar L})\varphi=\ip{F-\frac12d}{D\varphi}_\mH
    \end{aligned}
\end{equation}
for any $\varphi\in\FC$.
Then we have
\begin{equation}\label{eq:generator-decomposition}
    L=L_\rev+L_\irr,\qquad {\bar L}=L_\rev-L_\irr.
\end{equation}
Moreover, $L_\rev$ and $L_\irr$ are formally symmetric and antisymmetric on $\FC$, respectively, which means
\begin{equation}
    \int_\mH\psi(L_\rev\phi)\md\mu=\int_\mH\phi(L_\rev\psi)\md\mu,\qquad\int_\mH\psi(L_\irr\phi)\md\mu=-\int_\mH\phi(L_\irr\psi)\md\mu,\qquad\forall\psi,\phi\in\FC.
\end{equation}
The reversible part $L_\rev$ is in $\mu$-divergence form, whereas the irreversible drift satisfies $\divg_\mu\Firr=0$.
In particular,
\begin{equation}\label{eq:decomposed-generator-means}
    \int_\mH L_\rev\varphi\md\mu=\int_\mH L_\irr\varphi\md\mu=0,\quad\forall\varphi\in\FC.
\end{equation}
\end{proposition}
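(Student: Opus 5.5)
The proof is almost entirely algebraic, built on the duality \cref{eq:generator-adjoint-duality} and the identities \cref{eq:covariance-gradient-divergence} and \cref{eq:stationary-generator-mean}.

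First, the explicit formulas in \cref{eq:reversible-irreversible-generators}. Substitute $L\varphi=\ip{F}{D\varphi}_\mH+\frac12\Tr(aD^2\varphi)$ and ${\bar L}\varphi=\ip{d-F}{D\varphi}_\mH+\frac12\Tr(aD^2\varphi)$ from \cref{eq:formal-adjoint-generator}. Half the sum gives $\frac12\ip{d}{D\varphi}_\mH+\frac12\Tr(aD^2\varphi)$. Half the difference gives $\ip{F-\frac12 d}{D\varphi}_\mH$. Then \cref{eq:generator-decomposition} follows immediately from the definitions as half-sum and half-difference. Both $L_\rev\varphi$ and $L_\irr\varphi$ lie in $L^1(\mu)$, since $L\varphi$ and ${\bar L}\varphi$ do. This also gives $\ip{\Firr}{D\varphi}_\mH\in L^1(\mu)$ for every $\varphi\in\FC$, which is the first requirement in \cref{def:invariant-measure-divergence}.

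Second, formal symmetry and antisymmetry. Apply \cref{eq:generator-adjoint-duality} twice, once as stated and once with $\phi,\psi$ swapped:
\begin{equation*}
\int_\mH\psi(L\phi)\md\mu=\int_\mH\phi({\bar L}\psi)\md\mu,\qquad \int_\mH\phi(L\psi)\md\mu=\int_\mH\psi({\bar L}\phi)\md\mu.
\end{equation*}
Adding $\int\psi({\bar L}\phi)\md\mu$ to the first identity and using the second gives symmetry of $L_\rev$. Subtracting gives antisymmetry of $L_\irr$. All integrals are finite because the test functions are bounded and the generators are integrable.

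Third, the divergence statements. By \cref{eq:covariance-gradient-divergence}, $L_\rev\varphi=\frac12\divg_\mu(aD\varphi)$, so $L_\rev$ is in $\mu$-divergence form. For $\Firr$, I apply the antisymmetry of $L_\irr$ with $\phi=\varphi$ and with $\psi$ a cylinder function equal to $1$. Then $L_\irr 1=0$, so $\int_\mH\ip{\Firr}{D\varphi}_\mH\md\mu=0$ for all $\varphi$. Hence $\divg_\mu\Firr=0$ in the sense of \cref{eq:divergence-integration-by-parts}.

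Finally, \cref{eq:decomposed-generator-means}. The statement $\int L_\irr\varphi\md\mu=0$ is what was just shown. Then \cref{eq:stationary-generator-mean} gives $\int L_\rev\varphi\md\mu=\int L\varphi\md\mu-\int L_\irr\varphi\md\mu=0$. Alternatively, it follows directly from the definition of $\divg_\mu(aD\varphi)$ tested against the constant $1$.

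There is no real obstacle here. The only care needed is to check that the constant function belongs to $\FC$ (take $f\equiv1$) and that every integrand used is in $L^1(\mu)$, which was settled in the first step.
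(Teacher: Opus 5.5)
Your proof is correct and follows essentially the paper's route. The decomposition is read off from the definitions, the antisymmetry of $L_\irr$ comes from the duality \cref{eq:generator-adjoint-duality}, and $\divg_\mu\Firr=0$ is the statement $\int_\mH L_\irr\varphi\md\mu=0$ for all $\varphi\in\FC$. The only differences are cosmetic: the paper gets symmetry of $L_\rev$ from $\int_\mH\psi(L_\rev\phi)\md\mu=-\frac12\int_\mH\Gamma(\phi,\psi)\md\mu$ rather than from the swapped duality, and it derives the mean-zero identities from \cref{eq:stationary-generator-mean} and the divergence form of $L_\rev$ rather than by testing antisymmetry against the constant function.
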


\begin{proof}
\Cref{eq:reversible-irreversible-generators,eq:generator-decomposition} follow directly from \cref{eq:cylinder-generator,eq:formal-adjoint-generator}.
The divergence identity \cref{eq:divergence-integration-by-parts} results in
\begin{equation}
    \int_\mH\psi (L_\rev\phi)\md\mu=-\frac12\int_\mH\Gamma(\phi,\psi)\md\mu=\int_\mH\phi (L_\rev\psi)\md\mu,
\end{equation}
and the formal antisymmetry of $L_\irr$ follows from \cref{eq:generator-adjoint-duality} by taking the half-difference of $L$ and ${\bar L}$.
The definition of $\mu$-divergence together with \cref{eq:stationary-generator-mean} implies \cref{eq:decomposed-generator-means}, and thus $\divg_\mu F_\irr=0$ by definition.
\end{proof}
The identity $\divg_\mu\Firr=0$ is the invariant-measure analogue of the stationary-current condition $\nabla\cdot(\rho^\fin F_\irr^\fin)=0$ in finite dimensions.
Thus $\Firr$ plays the role of the stationary current velocity without requiring a reference Lebesgue measure on $\mH$.
The formal adjoint preserves the reversible drift and changes the sign of the irreversible drift.
Accordingly, we define the candidate reversed drift by
\begin{equation}\label{eq:candidate-reversed-drift}
    \bar F=d-F=F-2\Firr,
\end{equation}
which is naturally a candidate drift of the stationary time-reversed diffusion.
Taking \cref{eq:cylinder-generator,eq:formal-adjoint-generator,eq:reversible-irreversible-generators,eq:candidate-reversed-drift,eq:drift-decomposition} together, we have
\begin{equation}
    \begin{aligned}
        L\varphi&=\langle\underbrace{F_\rev+F_\irr}_{=F},{D\varphi}\rangle_\mH+\frac12\Tr(aD^2\varphi)& {\bar L}\varphi
        &=\langle{\underbrace{F_\rev-F_\irr}_{=\bar F}},{D\varphi}\rangle_\mH+\frac12\Tr(aD^2\varphi),\\
        &=L_\rev\varphi+L_\irr\varphi,&&=L_\rev\varphi-L_\irr\varphi,
    \end{aligned}
\end{equation}
where
\begin{equation}
    L_\rev\varphi=\ip{F_\rev}{D\varphi}_\mH+\frac12\Tr(aD^2\varphi),\qquad L_\irr\varphi=\ip{F_\irr}{D\varphi}_\mH
\end{equation}
for any $\varphi\in\FC$.

\subsection{Girsanov change of measure}\label{sec:girsanov-change-of-measure}
Fix $T>0$ and work on the filtered probability space supporting the stationary weak solution in \cref{asm:stationary-diffusion}.
Let $(\psi_t)_{0\le t\le T}$ be a $\mU$-valued predictable process satisfying $\int_0^T\norm{\psi_s}_{\mU}^2\md s<+\infty$ $\Prob$-almost surely.
Define the Dol\'eans--Dade exponential
\begin{equation}\label{eq:girsanov-exponential}
    Z_t^\psi \coloneqq \exp\left( \int_0^t\ip{\psi_s}{\md W_s}_{\mU} -\frac12\int_0^t\norm{\psi_s}_{\mU}^2\md s \right), \qquad 0\le t\le T.
\end{equation}
It is a strictly positive local martingale and hence a supermartingale.
When it is a true martingale, Girsanov's theorem changes the drift of $X$ by $B(X_t)\psi_t$.
We record this standard result \citep[Ch.~10]{DaPratoZabczyk2014}, together with the density formula for the induced state-path law \citep[Proposition~5(1)]{Leonard2014}.

\begin{theorem}\label{thm:girsanov-change-of-measure}
In the above setting, suppose that $(Z_t^\psi)_{0\le t\le T}$ is a true $\Prob$-martingale.
Define a probability measure $\widehat\Prob_T^\psi$ on $\mathcal F_T$ by $\md\widehat\Prob_T^\psi=Z_T^\psi\md\Prob$.
Then $\widehat\Prob_T^\psi\sim\Prob$ on $\mathcal F_T$, and the two measures agree on $\mathcal F_0$.
The process $\widehat W$, defined cylindrically by
\begin{equation}\label{eq:girsanov-shifted-wiener}
    \widehat W_t(h) \coloneqq W_t(h)-\int_0^t\ip{\psi_s}{h}_{\mU}\md s, \qquad h\in\mU,\quad 0\le t\le T,
\end{equation}
is a cylindrical Wiener process on $\mU$ relative to $(\mathcal F_t)_{0\le t\le T}$ under $\widehat\Prob_T^\psi$.
Under this measure, the same process $X$ satisfies
\begin{equation}\label{eq:girsanov-shifted-diffusion}
    \md X_t = \bigl(F(X_t)+B(X_t)\psi_t\bigr)\md t +B(X_t)\md\widehat W_t, \qquad 0\le t\le T,
\end{equation}
in the integral sense in $\mH$, with unchanged initial distribution $\mu$.

Let $\mathcal F_T^X$ be the $\Prob$-completion of $\sigma(X_s:0\le s\le T)$, and denote the induced state-path law by
\begin{equation}\label{eq:girsanov-tilted-path-law}
    \widehat\Prob_{[0,T]}^\psi \coloneqq \widehat\Prob_T^\psi \circ\bigl(X|_{[0,T]}\bigr)^{-1}.
\end{equation}
Then $\widehat\Prob_{[0,T]}^\psi\sim\Prob_{[0,T]}$.
Its Radon--Nikodym derivative admits a Borel version $\rho_T^\psi\colon C([0,T];\mH)\to(0,+\infty)$ satisfying
\begin{equation}\label{eq:girsanov-path-density}
    \begin{aligned}
        \frac{\md\widehat\Prob_{[0,T]}^\psi} {\md\Prob_{[0,T]}} \bigl(X|_{[0,T]}\bigr)=\rho_T^\psi\bigl(X|_{[0,T]}\bigr)=\E_\Prob\bigl[Z_T^\psi\mid\mathcal F_T^X\bigr]
        \qquad\Prob\text{-a.s.}
    \end{aligned}
\end{equation}
\end{theorem}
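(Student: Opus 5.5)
The plan is to derive everything from the abstract Girsanov theorem for cylindrical Wiener processes and then project onto the state path. The proof has four steps.

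\emph{Step 1: the change of measure.} Since $Z^\psi$ is a strictly positive true martingale with $\E_\Prob Z_T^\psi=1$, the measure $\widehat\Prob_T^\psi$ is a probability measure on $\mathcal F_T$. Strict positivity of $Z_T^\psi$ gives $\widehat\Prob_T^\psi\sim\Prob$ on $\mathcal F_T$. Because $Z_0^\psi=1$ and $Z^\psi$ is a martingale, for every $A\in\mathcal F_0$ we have
\begin{equation*}
\widehat\Prob_T^\psi(A)=\E[\mathbf 1_AZ_T^\psi]=\E[\mathbf 1_AZ_0^\psi]=\Prob(A).
\end{equation*}
Hence $X_0$, which is $\mathcal F_0$-measurable, keeps its law $\mu$ under $\widehat\Prob_T^\psi$.

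\emph{Step 2: the shifted noise is a cylindrical Wiener process.} Fix an orthonormal basis $(e_k)$ of $\mU$ and set $\beta^k=W(e_k)$, which are independent real Brownian motions. For each finite $n$, apply the finite-dimensional Girsanov theorem to $(\beta^1,\dots,\beta^n)$. The density process is the same $Z^\psi$, and the cross variation of $\beta^k$ with $\log Z^\psi$ is $\int_0^\cdot\langle\psi_s,e_k\rangle_{\mU}\md s$. Lévy's characterization then shows that $(\widehat W(e_1),\dots,\widehat W(e_n))$ is an $n$-dimensional $(\mathcal F_t)$-Brownian motion under $\widehat\Prob_T^\psi$. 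Linearity and $L^2$-continuity of $h\mapsto\widehat W_t(h)$ extend this to all $h\in\mU$, so $\widehat W$ is cylindrical Wiener relative to $(\mathcal F_t)$. This step is essentially \citep[Ch.~10]{DaPratoZabczyk2014}, and I would cite it rather than reprove it.

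\emph{Step 3: the SDE under the new measure.} The main obstacle is transferring the $\mH$-valued stochastic integral. I would use the basis expansion
\begin{equation*}
\int_0^tB(X_s)\md W_s=\sum_k\int_0^tB(X_s)e_k\md\beta^k_s,
\end{equation*}
which converges in probability uniformly on $[0,T]$ under $\Prob$, and hence also under the equivalent measure $\widehat\Prob_T^\psi$. Termwise, $\md\beta^k=\md\widehat\beta^k+\langle\psi_s,e_k\rangle\md s$, and the stochastic integral is invariant under equivalent changes of measure for semimartingale integrators. The drift terms sum to $\int_0^tB(X_s)\psi_s\md s$. This is Bochner integrable a.s. because
\begin{equation*}
\|B\psi\|_\mH\le\|B\|_{\HS}\|\psi\|_\mU,
\end{equation*}
and by Cauchy--Schwarz together with \cref{eq:weak-solution-integrability} and $\int_0^T\|\psi_s\|_{\mU}^2\md s<\infty$. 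The remaining integrability conditions are a.s. statements, so they persist under the equivalent measure. This yields \cref{eq:girsanov-shifted-diffusion}.

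\emph{Step 4: the state-path density.} For bounded Borel $G$ on $C([0,T];\mH)$,
\begin{equation*}
\widehat\Prob^\psi_{[0,T]}(G)=\E[G(X)Z_T^\psi]=\E\bigl[G(X)\,\E[Z_T^\psi\mid\mathcal F_T^X]\bigr].
\end{equation*}
The conditional expectation is $\mathcal F_T^X$-measurable. By the Doob--Dynkin lemma, applied after modifying on a $\Prob$-null set (allowed because $\mathcal F_T^X$ is the completion), it equals $\rho(X|_{[0,T]})$ for some Borel $\rho\ge0$ on $C([0,T];\mH)$. This gives $\rho=\md\widehat\Prob^\psi_{[0,T]}/\md\Prob_{[0,T]}$.

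Positivity follows because $Z_T^\psi>0$ a.s. implies $\E[Z_T^\psi\mid\mathcal F_T^X]>0$ a.s. Redefining $\rho$ to equal $1$ on the $\Prob_{[0,T]}$-null set where it is not positive makes it $(0,\infty)$-valued. A strictly positive density gives mutual absolute continuity, so $\widehat\Prob_{[0,T]}^\psi\sim\Prob_{[0,T]}$, matching \citep[Proposition~5(1)]{Leonard2014}.
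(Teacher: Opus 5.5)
Your proposal is correct and follows essentially the same route as the paper, which gives no separate argument but records the result by citing the cylindrical Girsanov theorem of Da Prato--Zabczyk (your Steps 1--3) and L\'eonard's Proposition~5(1) for the state-path density (your Step 4, the conditional-expectation plus Doob--Dynkin argument). The one step to tighten is the extension in Step 2 from basis vectors to general $h$: it works because $\widehat W_t(P_nh)\to\widehat W_t(h)$ in $\Prob$-probability, hence in $\widehat\Prob_T^\psi$-probability. More simply, you can apply the Girsanov--Meyer and L\'evy argument directly to arbitrary finite families $W(h_1),\dots,W(h_n)$, which avoids the extension altogether.
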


\section{Entropy-production bounds and the finite-energy regime}\label{sec:entropy-production-bounds}
In \cref{sec:drift-decomposition}, we have introduced the mapping $\bar F=F-2\Firr$ as the candidate reversed drift.
The lower bound below uses only \cref{asm:stationary-diffusion,asm:cylinder-generator-integrability,asm:covariance-divergence}.
For the matching upper bounds in the regular regime, we will also assume that the genuine stationary reversal admits the following diffusion representation, which will be verified in \cref{sec:time-reversal}.

\begin{assumption}\label{asm:reversed-diffusion}
The stationary time reversal of \cref{eq:stationary-diffusion} is an $\mH$-valued It\^o diffusion
\begin{equation}\label{eq:stationary-reversed-diffusion}
    \md\bar X_t =\bar F(\bar X_t)\md t+B(\bar X_t)\md\widetilde W_t, \qquad \bar F=F-2\Firr,
\end{equation}
with a reverse-time cylindrical Wiener process $(\widetilde W_t)_{t\ge0}$.
\end{assumption}
Recall that the formal adjoint of $L$ can be written as
\begin{equation}
    {\bar L}\varphi=\ip{\bar F}{D\varphi}_\mH+\frac12\Tr(aD^2\varphi),\quad\forall\varphi\in\FC,
\end{equation}
so under \cref{asm:reversed-diffusion}, we may apply the same deduction as in \cref{prop:forward-generator-identification} to connect the operator ${\bar L}$ to the reversed transition semigroup.
\begin{proposition}\label{prop:reversed-generator-identification}
Under \cref{asm:stationary-diffusion,asm:cylinder-generator-integrability,asm:covariance-divergence,asm:reversed-diffusion}, the reversed transition semigroup $(\bar P_t)_{t\ge0}$ corresponding to \cref{eq:stationary-reversed-diffusion} extends to a strongly continuous contraction semigroup on $L^1(\mu)$.
The operator ${\bar L}$ acts the same as the restriction of the generator on $\FC$ up to a difference on $\mu$-null sets.
Moreover, for every $t\ge0$ and $\varphi\in\FC$,
\begin{equation}\label{eq:reversed-semigroup-integral}
    \bar P_t\varphi-\varphi =\int_0^t\bar P_s({\bar L}\varphi)\md s \qquad\text{in }L^1(\mu),
\end{equation}
where the time integral is a Bochner integral in $L^1(\mu)$.
\end{proposition}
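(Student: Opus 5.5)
The plan is to repeat the argument of \cref{prop:forward-generator-identification} almost verbatim for the reversed process $\bar X$. The inputs it needs are three. First, $\bar X$ is a $\mu$-stationary, time-homogeneous Markov process with continuous paths. Second, it solves an $\mH$-valued It\^o equation with drift $\bar F$ and noise coefficient $B$. Third, $\bar L\varphi\in L^1(\mu)$ for every $\varphi\in\FC$.

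The first input is immediate. The time reversal of a stationary Markov process with continuous paths is again stationary with the same one-dimensional marginal $\mu$, is Markov with respect to the reversed filtration, and has continuous paths. Time-homogeneity follows from stationarity of the forward process. The second input is exactly \cref{asm:reversed-diffusion}. The third was already established when $\bar L$ was defined in \cref{eq:formal-adjoint-generator}. There, $\bar L\varphi=\divg_\mu(aD\varphi)-L\varphi$, where $\divg_\mu(aD\varphi)\in L^1(\mu)$ by \cref{eq:covariance-gradient-divergence} and $L\varphi\in L^1(\mu)$ by \cref{asm:cylinder-generator-integrability}.

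\textbf{Step 1 (semigroup).} Define $\bar P_t u(x)=\E[u(\bar X_t)\mid \bar X_0=x]$ using Borel versions of the kernels. Conditional Jensen and invariance of $\mu$ give the contraction property on $L^1(\mu)$. Path continuity and dominated convergence give $\|\bar P_tv-v\|_{L^1(\mu)}\to0$ for $v\in C_b(\mH)$, and density of $C_b(\mH)$ in $L^1(\mu)$ then yields strong continuity.

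\textbf{Step 2 (martingale).} Fix $\varphi\in\FC$ and apply It\^o's formula to $\varphi(\bar X_t)$ using \cref{eq:stationary-reversed-diffusion}. The trace term is well defined because $a(x)$ is trace class and $D^2\varphi$ has finite rank. This shows that
\begin{equation*}
\bar M_t^\varphi=\varphi(\bar X_t)-\varphi(\bar X_0)-\int_0^t\bar L\varphi(\bar X_s)\md s
\end{equation*}
is a continuous local martingale. By stationarity,
\begin{equation*}
\E\sup_{t\le T}|\bar M^\varphi_t|\le 2\|\varphi\|_\infty+T\|\bar L\varphi\|_{L^1(\mu)}<+\infty,
\end{equation*}
so $\bar M^\varphi$ is a true martingale. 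Taking conditional expectation given $\bar X_0$ and using conditional Fubini gives \cref{eq:reversed-semigroup-integral}.

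\textbf{Step 3 (generator).} Strong continuity of $\bar P$ gives
\begin{equation*}
\left\|\frac{\bar P_t\varphi-\varphi}{t}-\bar L\varphi\right\|_{L^1(\mu)}\le\frac1t\int_0^t\|\bar P_s\bar L\varphi-\bar L\varphi\|_{L^1(\mu)}\md s\to0,
\end{equation*}
so $\varphi$ lies in the domain of the reversed $L^1(\mu)$ generator, which acts on it as $\bar L\varphi$ up to $\mu$-null sets.

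\textbf{Main obstacle.} The only delicate point is the It\^o formula in Step 2. It needs the path-integrability condition \cref{eq:weak-solution-integrability} for the reversed equation, namely $\int_0^T(\|\bar F(\bar X_s)\|+\|B(\bar X_s)\|^2_{\HS})\md s<\infty$ almost surely. I take this to be part of the meaning of ``$\mH$-valued It\^o diffusion'' in \cref{asm:reversed-diffusion}, with the drift integral understood in the Bochner sense. Granting that, everything needed is coordinatewise, since $\varphi$ depends on finitely many coordinates $\ip{x}{h_i}_\mH$, and the argument goes through exactly as in the forward case.
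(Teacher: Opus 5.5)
Your proposal is correct and follows the paper's route: the paper justifies this proposition by rerunning the deduction of \cref{prop:forward-generator-identification} for the reversed equation \cref{eq:stationary-reversed-diffusion}, using $\bar L\varphi\in L^1(\mu)$ from \cref{eq:formal-adjoint-generator}. The integrability you flag as the main obstacle is automatic at the coordinatewise level needed for cylinder functions. Indeed, $\ip{\bar F}{h}_\mH=\ip{d-F}{h}_\mH$ and $\ip{ah}{h}_\mH$ lie in $L^1(\mu)$ by \cref{lem:generator-integrability-characterization,asm:covariance-divergence}, so stationarity of $\bar X$ gives the required path integrability almost surely.
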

In this section, we will explore when and how the EPR value can be explicitly evaluated.
To this end, we introduce the regular regime for our infinite-dimensional Hilbert-valued case as
\begin{equation}\label{eq:finite-cameron-martin-energy}
    \int_\mH g^2\md\mu<+\infty,\quad\text{where}\quad g(x)\coloneqq
    \begin{cases}
        \norm{a(x)^{-1/2}\Firr(x)}_\mH,&\Firr(x)\in\Ran\left(a(x)^{1/2}\right),\\
        +\infty,&\text{otherwise}.
    \end{cases}
\end{equation}
Here $a(x)^{-1/2}$ denotes the minimum-norm inverse of $a(x)^{1/2}$ on its range, with values in $(\ker a(x))^\perp$.
Note that the regular condition also implies that $\Firr(x)\in\Ran\left(a^{1/2}(x)\right)$ holds for $\mu$-a.e. $x$.
We first establish an extended lower bound, then obtain matching upper bounds in the regular regime by changing measure in either direction.

\subsection{A general lower bound for entropy production}\label{sec:entropy-lower-bound}
We begin with a pointwise dual representation of the extended Cameron--Martin energy $g^2$.
This representation, combined with discrete-time sampling and a variational bound for relative entropy, yields the EPR lower bound.
\begin{lemma}\label{lem:cameron-martin-duality}
For $x\in\mH$, define the concave quadratic $Q_x:\mH\to\R$ as
\begin{equation}
    Q_x(c)\coloneqq2\ip{\Firr(x)}{c}_\mH-\frac12\ip{c}{a(x)c}_\mH,\qquad c\in\mH .
\end{equation}
Then we have
\begin{equation}\label{eq:cameron-martin-duality}
    \sup_{c\in\mH}Q_x(c)=2g(x)^2\in[0,+\infty],
\end{equation}
and the mapping $x\mapsto Q_x(c)$ is measurable with respect to $x$.
\end{lemma}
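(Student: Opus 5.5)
The proof is a pointwise statement for fixed $x$, so we write $f\coloneqq\Firr(x)$, $A\coloneqq a(x)$, and $S\coloneqq A^{1/2}$. Here $A$ is a positive self-adjoint trace-class operator, and $S$ is its positive square root, so $\ip{c}{Ac}_\mH=\norm{Sc}_\mH^2$. The plan is to split into the two cases of the definition of $g$. Note first that $Q_x(0)=0$, so the supremum is always at least $0$.

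\emph{Case $f\in\Ran(S)$.} Write $f=Sw$ with $w\coloneqq S^{-1}f\in(\ker S)^\perp$ the minimum-norm preimage, so that $g(x)=\norm{w}_\mH$. Then for every $c$,
\[
Q_x(c)=2\ip{w}{Sc}_\mH-\tfrac12\norm{Sc}_\mH^2\le 2\norm{w}_\mH^2,
\]
using $2\ip{w}{v}-\tfrac12\norm{v}^2=2\norm{w}^2-\tfrac12\norm{v-2w}^2$ with $v=Sc$. For the reverse inequality, $\Ran(S)$ is dense in $(\ker S)^\perp$, which contains $w$. Choose $c_n$ with $Sc_n\to 2w$; then $Q_x(c_n)\to 2\norm{w}^2$. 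Hence the supremum equals $2g(x)^2$.

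\emph{Case $f\notin\Ran(S)$.} We must show $\sup_c Q_x(c)=+\infty$, and this is the main obstacle. The plan is a contrapositive argument via Riesz representation. Suppose $\sup_c Q_x(c)=M<\infty$. Replacing $c$ by $tc$ and optimizing over $t>0$ gives
\[
\ip{f}{c}^2\le M\norm{Sc}^2 \qquad \text{for all } c,
\]
since $2t\ip{f}{c}-\tfrac{t^2}{2}\norm{Sc}^2\le M$ for all $t$ forces this (the case $Sc=0$ forces $\ip{f}{c}=0$). So the linear functional $Sc\mapsto\ip{f}{c}$ is well defined and bounded on $\Ran(S)$. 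It therefore extends to $\overline{\Ran(S)}$, and there is $w\in\overline{\Ran(S)}$ with $\ip{f}{c}=\ip{w}{Sc}=\ip{Sw}{c}$ for all $c$. This gives $f=Sw\in\Ran(S)$, a contradiction. Thus the supremum is $+\infty=2g(x)^2$ under the convention in \eqref{eq:finite-cameron-martin-energy}.

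\emph{Measurability.} The map $x\mapsto\ip{\Firr(x)}{c}_\mH$ is Borel because $F$ and $d$ are Borel. The map $x\mapsto\ip{c}{a(x)c}_\mH=\norm{B(x)^*c}_\mU^2$ is Borel because $B$ is Borel into $\HS(\mU,\mH)$ and $T\mapsto\norm{T^*c}$ is continuous. Hence $x\mapsto Q_x(c)$ is measurable for each fixed $c$. As a by-product, $Q_x$ is continuous in $c$, so the supremum can be taken over a countable dense subset of $\mH$; this makes $g^2$ Borel, which will be needed later.
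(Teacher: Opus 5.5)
Your proof is correct, but it takes a different route from the paper's.

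\textbf{The paper's route.} The paper regularizes the quadratic form, replacing $a(x)$ by $a(x)+\varepsilon I$. It computes the regularized supremum in closed form as $2\ip{\Firr}{(a+\varepsilon I)^{-1}\Firr}_\mH$, which is attained at $c_\varepsilon=2(a+\varepsilon I)^{-1}\Firr$. It then interchanges the suprema over $c$ and $\varepsilon$. Finally, it invokes the spectral theorem and monotone convergence to identify the increasing limit as $\varepsilon\to0^+$ with $\norm{a^{-1/2}\Firr}_\mH^2$ or $+\infty$. That identification rests on the functional-calculus fact that $\Ran(a^{1/2})$ is the domain of the unbounded inverse, which the paper uses without comment.

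\textbf{Your route.} You avoid the spectral theorem entirely. Completing the square, together with density of $\Ran(S)$ in $(\ker S)^\perp$, settles the finite case. The bounded-functional and Riesz argument, which is a Douglas-type range criterion, settles the infinite case. In effect you prove from scratch the range characterization the paper takes from spectral calculus. One small slip: optimize over $t\in\R$ rather than $t>0$, or replace $c$ by $-c$, to cover $\ip{f}{c}<0$. The sharper constant you get is $M/2$, which only helps.

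\textbf{What each buys.} Your version is more elementary and self-contained. The paper's version supplies explicit approximate maximizers $c_\varepsilon$ that depend measurably on $x$. The same resolvent is reused immediately in \cref{lem:measurable-minimal-noise-control} to build the Borel control $u=\lim_{\varepsilon\to0^+}2B^*(a+\varepsilon I)^{-1}\Firr$ on $R$. Conversely, your completed-square identity is exactly the one the paper uses there to show $\norm{u}_\mU^2=4g^2$.

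\textbf{Measurability.} The two measurability arguments coincide, both taking a supremum over a countable dense set. You additionally supply the justification $\ip{c}{a(x)c}_\mH=\norm{B(x)^*c}_\mU^2$ for Borel dependence on $x$, which the paper leaves implicit.
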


\begin{proof}
For simplicity, we fix $x\in\mH$ and abbreviate $a=a(x)$ and $\Firr=\Firr(x)$.
For $\varepsilon>0$, we introduce
\begin{equation}\label{eq:regularized-energy-quadratic}
    Q_x^\varepsilon(c)\coloneqq2\ip{\Firr}{c}_\mH-\frac12\ip{c}{(a+\varepsilon I)c}_\mH=Q_x(c)-\frac\varepsilon2\norm{c}_\mH^2,\qquad\forall c\in\mH,
\end{equation}
then we have
\begin{equation}
    \sup_{c\in\mH} Q_x(c)=\sup_{c\in\mH}\sup_{\varepsilon>0}Q_x^\varepsilon(c)=\sup_{\varepsilon>0}\sup_{c\in\mH}Q_x^\varepsilon(c)
    =\sup_{\varepsilon>0}2\ip{\Firr}{(a+\varepsilon I)^{-1}\Firr}_\mH.
\end{equation}
By the spectral theorem and monotone convergence, we have the monotonic limit
\begin{equation}
    \lim_{\varepsilon\to0^+} \ip{\Firr}{(a+\varepsilon I)^{-1}\Firr}_\mH =
    \begin{cases}
        \norm{a^{-1/2}\Firr}_\mH^2, & \Firr\in\Ran(a^{1/2}),\\
        +\infty, & \text{otherwise},
    \end{cases}
\end{equation}
which proves \cref{eq:cameron-martin-duality}.
Since $\mH$ is separable and the mapping $x\mapsto Q_x(c)$ is Borel for each $c\in\mH$, the function $g^2$ is Borel measurable as a supremum over a countable dense subset of $\mH$.
\end{proof}
We next turn to a path-space lower bound, starting from the two-point law of the stationary forward process.
For each step size $\delta>0$, let
\begin{equation}
    p_\delta(x,\cdot)=\Law(X_\delta\mid X_0=x)
\end{equation}
be a Borel version of the forward transition kernel, and define
\begin{equation}\label{eq:stationary-two-point-law}
    \Pi_\delta(\md x,\md y)\coloneqq\mu(\md x)p_\delta(x,\md y)
\end{equation}
as the joint law of $(X_0,X_\delta)$.

Let $\tau\colon\mH\times\mH\to\mH\times\mH$ denote the transposition map.
By the definition of stationary time reversal, the corresponding two-point law of the reversed path measure is
\begin{equation}\label{eq:reversed-two-point-law}
    \bar\Pi_\delta=\tau_\#\Pi_\delta, \qquad \tau(x,y)=(y,x).
\end{equation}
Since both marginals of $\Pi_\delta$ are $\mu$ by stationarity, the first marginal of $\bar\Pi_\delta$ is also $\mu$.
Hence, by disintegration on the Polish space $\mH$, there exists a Borel transition kernel $\bar p_\delta$ such that
\begin{equation}
    \bar\Pi_\delta(\md x,\md y) =\mu(\md x)\bar p_\delta(x,\md y).
\end{equation}
Equivalently, $\bar p_\delta$ is the transition kernel of the stationary time-reversed Markov process.

We define the one-step relative entropy by
\begin{equation}\label{eq:one-step-relative-entropy}
    I(\delta)\coloneqq\int_\mH\KLdiv{p_\delta(x,\cdot)}{\bar p_\delta(x,\cdot)}\mu(\md x).
\end{equation}
Since $\Pi_\delta$ and $\bar\Pi_\delta$ have the same first marginal $\mu$, the chain rule for relative entropy yields
\begin{equation}\label{eq:two-point-entropy-chain-rule}
    \KLdiv{\Pi_\delta}{\bar\Pi_\delta} =\KLdiv{\mu}{\mu}+\int_\mH\KLdiv{p_\delta(x,\cdot)}{\bar p_\delta(x,\cdot)}\mu(\md x) =I(\delta)
\end{equation}
in the extended interval $[0,+\infty]$.

\begin{lemma}\label{lem:two-point-short-time-expansion}
Under \cref{asm:stationary-diffusion,asm:cylinder-generator-integrability,asm:covariance-divergence}, for every smooth bounded cylinder function $\Psi$ on $\mH\times\mH$ of the form
\begin{equation}
    \Psi(x,y)=f\left(\ip{x}{p_1}_\mH,\cdots,\ip{x}{p_m}_\mH,\ip{y}{q_1}_\mH,\cdots,\ip{y}{q_n}_\mH\right),\qquad f\in C_b^\infty(\mR^{m+n}),
    \quad\{p_i\}_{i=1}^m\cup\{q_j\}_{j=1}^n\subseteq\mH,
\end{equation}
we have
\begin{equation}\label{eq:two-point-short-time-expansion}
    \E[\Psi(X_0,X_\delta)]=\int_\mH\Psi(x,x)\mu(\md x)+\delta\int_\mH(L_2\Psi)(x,x)\mu(\md x)+o(\delta),\qquad0<\delta\ll1.
\end{equation}
\end{lemma}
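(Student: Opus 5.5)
The plan is to freeze the first argument and apply the forward It\^o calculus in the second. Here I read $L_2\Psi(x,y)$ as the operator $L$ of \cref{eq:cylinder-generator} applied to $y\mapsto\Psi(x,y)$ with $x$ held fixed. Thus
\begin{equation*}
    (L_2\Psi)(x,y)=\sum_{j}\partial_{m+j}f\,\ip{F(y)}{q_j}_\mH+\frac12\sum_{j,k}\partial_{m+j}\partial_{m+k}f\,\ip{a(y)q_j}{q_k}_\mH ,
\end{equation*}
where the derivatives of $f$ are evaluated at $(\ip{x}{p_i}_\mH,\ip{y}{q_j}_\mH)$.

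Since $X_0$ is $\mathcal F_0$-measurable, It\^o's formula applied to the real semimartingales $\ip{X_t}{q_j}_\mH$, with the $\mathcal F_0$-measurable parameters $\ip{X_0}{p_i}_\mH$, gives
\begin{equation*}
    \Psi(X_0,X_t)-\Psi(X_0,X_0)=\int_0^t(L_2\Psi)(X_0,X_s)\md s+M_t ,
\end{equation*}
where $M$ is a continuous local martingale. Exactly as in \cref{eq:cylinder-martingale-integrability}, $M$ is a true martingale. The bound uses $\|\Psi\|_\infty<\infty$ together with
\begin{equation*}
    |(L_2\Psi)(x,y)|\le C\Bigl(\sum_j|\ip{F(y)}{q_j}_\mH|+\sum_{j,k}|\ip{a(y)q_j}{q_k}_\mH|\Bigr)\eqqcolon C\,H(y).
\end{equation*}
Here $H\in L^1(\mu)$ by \cref{lem:generator-integrability-characterization}, and $\E\int_0^\delta H(X_s)\md s=\delta\|H\|_{L^1(\mu)}$ by stationarity. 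Taking expectations therefore yields
\begin{equation*}
    \E[\Psi(X_0,X_\delta)]-\int_\mH\Psi(x,x)\mu(\md x)=\int_0^\delta\E\bigl[(L_2\Psi)(X_0,X_s)\bigr]\md s .
\end{equation*}
The claim then reduces to showing $\E[(L_2\Psi)(X_0,X_s)]\to\E[(L_2\Psi)(X_0,X_0)]$ as $s\to0^+$, since averaging over $[0,\delta]$ then gives the $o(\delta)$ remainder.

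For this continuity, write $L_2\Psi$ as a finite sum of terms $G(x,y)h(y)$. Each $G$ is a bounded continuous derivative of $f$ composed with the coordinate maps, and each $h\in L^1(\mu)$ is one of $\ip{F}{q_j}_\mH$ or $\ip{aq_j}{q_k}_\mH$. For a single term I split the difference as
\begin{equation*}
    \E\bigl[G(X_0,X_s)h(X_s)-G(X_0,X_0)h(X_0)\bigr].
\end{equation*}
The main obstacle is that $h$ is merely $L^1(\mu)$ and not continuous, so path continuity cannot be used directly. I would handle this by density. Given $\varepsilon>0$, choose $\tilde h\in C_b(\mH)$ with $\|h-\tilde h\|_{L^1(\mu)}<\varepsilon$, which is possible since $C_b(\mH)$ is dense in $L^1(\mu)$ as in the proof of \cref{prop:forward-generator-identification}. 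By stationarity,
\begin{equation*}
    \E|h(X_s)-\tilde h(X_s)|=\|h-\tilde h\|_{L^1(\mu)}<\varepsilon\qquad\text{uniformly in }s,
\end{equation*}
so replacing $h$ by $\tilde h$ at both times costs at most $2\|G\|_\infty\varepsilon$. For the remaining term, $G(X_0,X_s)\tilde h(X_s)\to G(X_0,X_0)\tilde h(X_0)$ almost surely by path continuity, and these quantities are uniformly bounded. Dominated convergence then gives convergence of the expectations. Letting $\varepsilon\to0$ proves the required continuity.

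Combining these steps, $\int_0^\delta\E[(L_2\Psi)(X_0,X_s)]\md s=\delta\int_\mH(L_2\Psi)(x,x)\mu(\md x)+o(\delta)$. This is \cref{eq:two-point-short-time-expansion}. The only inputs needed are \cref{asm:stationary-diffusion,asm:cylinder-generator-integrability}; \cref{asm:covariance-divergence} is used only to make $L_2$ well placed in the later two-point computations.
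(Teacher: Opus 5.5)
Your proposal is correct and follows the paper's route: It\^o's formula in the second variable with $X_0$ frozen, a zero-mean martingale term, and continuity of $s\mapsto\E[(L_2\Psi)(X_0,X_s)]$ at $0$. The only difference is that you handle the merely $L^1(\mu)$ coefficients through $L^1(\mu)$-density of $C_b(\mH)$ combined with stationarity, whereas the paper uses a truncation (uniform integrability) argument, and your observation that \cref{asm:covariance-divergence} is not actually needed for this lemma is accurate.
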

\begin{proof}
Fixing the first variable as $X_0$ and applying finite-dimensional It\^o's formula to the second variable, we obtain
\begin{equation}\label{eq:two-point-ito-formula}
    \Psi(X_0,X_\delta)-\Psi(X_0,X_0)=\int_0^\delta(L_2\Psi)(X_0,X_s)\md s+\int_0^\delta\ip{B(X_s)^*D_2\Psi(X_0,X_s)}{\md W_s}_\mU.
\end{equation}
Here we define
\begin{equation}\label{eq:second-variable-generator}
    L_2\Psi(x,y)=\ip{F(y)}{D_2\Psi(x,y)}_\mH+\frac12\Tr(a(y)D_{22}^2\Psi(x,y))
\end{equation}
as the operator $L$ acting on the second variable.
The differential operators $D_2$ and $D_{22}^2$ are the Fr\'echet gradient and Hessian of $\Psi$ with respect to the second Hilbert-space variable, respectively.
Since the martingale term in \cref{eq:two-point-ito-formula} is square integrable and thus has zero expectation by \cref{lem:generator-integrability-characterization}, it follows by taking expectation that
\begin{equation}\label{eq:two-point-expected-increment}
    \E[\Psi(X_0,X_\delta)-\Psi(X_0,X_0)] =\int_0^\delta\E[(L_2\Psi)(X_0,X_s)]\md s.
\end{equation}
By directly expanding $(L_2\Psi)(X_0,X_s)$ according to \cref{eq:second-variable-generator}, we can write it as a finite sum of terms $c(X_s)\alpha(X_0,X_s)$, where $c\in L^1(\mu)$ by \cref{lem:generator-integrability-characterization} and $\alpha$ is bounded and continuous.
For each summand, we will show that
\begin{equation}\label{eq:stationary-weighted-continuity}
    \E c(X_s)\alpha(X_0,X_s)-\E c(X_s)\alpha(X_s,X_s)\to0,\qquad s\to0^+.
\end{equation}
When $c$ is bounded, the limit is trivial by bounded convergence.
For a general $c\in L^1(\mu)$, we employ the truncation argument with a positive integer $N$ as follows.
For $|c(X_s)|\le N$, we have
\begin{equation}
    \E\left[|c(X_s)\alpha(X_0,X_s)-c(X_s)\alpha(X_s,X_s)|\cdot\mathbb 1_{|c(X_s)|\le N}\right]\le N\E|\alpha(X_0,X_s)-\alpha(X_s,X_s)|\to0
\end{equation}
as $s\to0^+$ by bounded convergence.
Otherwise
\begin{equation}
    \E\left[|c(X_s)\alpha(X_0,X_s)-c(X_s)\alpha(X_s,X_s)|\cdot\mathbb 1_{|c(X_s)|> N}\right]
    \le \sup_{s\ge0}\E\left[|c(X_s)|\cdot\mathbb 1_{|c(X_s)|>N}\right]\cdot2\norm{\alpha}_\infty,
\end{equation}
where the supremum does not depend on $s$ by stationarity, and the upper bound converges to 0 as $N\to+\infty$.
Therefore, we have \cref{eq:stationary-weighted-continuity}, which implies
\begin{equation}\label{eq:two-point-generator-continuity}
    \E (L_2\Psi)(X_0,X_s)-\E (L_2\Psi)(X_s,X_s)\to0,\qquad s\to0^+.
\end{equation}
Consequently, \cref{eq:two-point-expected-increment,eq:two-point-generator-continuity}, together with stationarity, yield \cref{eq:two-point-short-time-expansion}.
\end{proof}
\begin{theorem}\label{thm:entropy-production-lower-bound}
Under \cref{asm:stationary-diffusion,asm:cylinder-generator-integrability,asm:covariance-divergence}, we have the following lower bound
\begin{equation}\label{eq:entropy-production-lower-bound}
    \epr\ge2\int_\mH g^2\md\mu\in[0,+\infty].
\end{equation}
Hence $\epr=+\infty$ whenever \cref{eq:finite-cameron-martin-energy} fails.
\end{theorem}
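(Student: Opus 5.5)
The plan is to bound $\epr$ below by a two-point relative entropy, use the Donsker--Varadhan variational formula with antisymmetric cylinder test functions, and extract the energy $2\int g^2\md\mu$ from the first-order expansion of \cref{lem:two-point-short-time-expansion}, using the duality of \cref{lem:cameron-martin-duality}.

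\textbf{Step 1 (reduction to one step).} Fix $\delta>0$. The map $\omega\mapsto(\omega_0,\omega_\delta)$ sends $\Prob_{[0,\delta]}$ to $\Pi_\delta$ and $\Prob^{\rev}_{[0,\delta]}$ to $\tau_\#\Pi_\delta=\bar\Pi_\delta$. The data-processing inequality together with \cref{prop:entropy-time-linearity} and \cref{eq:two-point-entropy-chain-rule} then gives $\delta\,\epr\ge I(\delta)=\KLdiv{\Pi_\delta}{\bar\Pi_\delta}$. By Donsker--Varadhan, for every bounded measurable $\Psi$,
\begin{equation*}
    I(\delta)\ge \int\Psi\md\Pi_\delta-\log\int e^{\Psi}\md\bar\Pi_\delta .
\end{equation*}
We take $\Psi$ antisymmetric, $\Psi(x,y)=\psi(x,y)-\psi(y,x)$ with $\psi$ a smooth bounded cylinder function. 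Then $\int e^\Psi\md\bar\Pi_\delta=\E[e^{-\Psi(X_0,X_\delta)}]$, so
\begin{equation*}
    \delta\,\epr\ge \E\Psi(X_0,X_\delta)-\log\E e^{-\Psi(X_0,X_\delta)} .
\end{equation*}

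\textbf{Step 2 (choice of test function and expansion).} Let $\varphi\in\FC$ and $\phi=\sum_i\beta_i(x)\ip{\cdot}{c_i}$, where we truncate the linear functional so that everything stays bounded. The natural candidate is $\Psi(x,y)=\chi(\ip{y-x}{c})\,\beta(x,y)$ with $\chi$ a bounded smooth function equal to the identity near $0$; the simplest concrete form is $\Psi(x,y)=\eta(y)-\eta(x)$ times a localising factor. The first-order term of \cref{lem:two-point-short-time-expansion} applied to $\Psi$ and to $e^{-\Psi}$ gives
\begin{equation*}
    \E\Psi(X_0,X_\delta)=\delta\int (L_2\Psi)(x,x)\md\mu+o(\delta),\qquad \E e^{-\Psi}=1+\delta\int L_2(e^{-\Psi})(x,x)\md\mu+o(\delta),
\end{equation*}
since $\Psi(x,x)=0$. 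Dividing by $\delta$ and letting $\delta\to0$ yields
\begin{equation*}
    \epr\ge\int\bigl[L_2\Psi-L_2e^{-\Psi}\bigr](x,x)\md\mu .
\end{equation*}
On the diagonal, $L_2e^{-\Psi}=-L_2\Psi+\tfrac12\ip{aD_2\Psi}{D_2\Psi}$. Hence the bound becomes $\int[2L_2\Psi-\tfrac12\Gamma_2(\Psi)](x,x)\md\mu$.

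\textbf{Step 3 (identifying $\Firr$).} Take $\Psi(x,y)=\ip{\kappa(x)}{y-x}$ suitably bounded, with $\kappa(x)=\sum_k\beta_k(x)c_k$ a cylinder-valued field, $\beta_k\in\FC$ and $c_k\in\mH$. On the diagonal, $D_2\Psi=\kappa(x)$ and $D_{22}^2\Psi=0$, so $L_2\Psi(x,x)=\ip{F}{\kappa}$. Antisymmetrising via $\tfrac12(\Psi(x,y)-\Psi(y,x))$ introduces the $y$-derivative of $\kappa$. After integrating by parts with \cref{asm:covariance-divergence}, this replaces $F$ by $F-\tfrac12 d=\Firr$. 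That is exactly the antisymmetric part, as in \cref{prop:reversible-irreversible-decomposition}, because the symmetric part integrates to zero through $\int L_\rev$ terms. The outcome is
\begin{equation*}
    \epr\ge\int_\mH Q_x(\kappa(x))\,\mu(\md x)
\end{equation*}
for all such cylinder fields $\kappa$. One checks that the truncation by $\chi$ contributes only $o(1)$.

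\textbf{Step 4 (supremum).} Take the supremum over fields $\kappa$ with values in a finite-dimensional span of a countable dense set $\{c_k\}$ and coefficients $\beta_k\in\FC$. By a measurable selection of near-maximisers of $c\mapsto Q_x(c)$ over finite subsets, approximated in $L^1(\mu)$ by cylinder coefficients, we get $\sup_\kappa\int Q_x(\kappa)\md\mu=\int\sup_c Q_x(c)\md\mu=2\int g^2\md\mu$ by \cref{lem:cameron-martin-duality}. Integrability comes from \cref{lem:generator-integrability-characterization}, and monotone convergence handles the infinite case.

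\textbf{Main obstacle.} I expect the hardest step to be Step 3 together with Step 4. The difficulties are getting the exact coefficient $2\Firr$ rather than $2F$ through the antisymmetrisation and integration by parts, keeping all test functions bounded cylinder functions on $\mH\times\mH$ as required by \cref{lem:two-point-short-time-expansion}, and justifying the interchange of supremum and integral when $g=+\infty$ on a positive-measure set. For the last point I would first use $Q^\varepsilon_x$-type truncations, with $|c|\le R$ and finite dimension, which gives an increasing family of bounded approximants.
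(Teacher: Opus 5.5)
Your proposal is correct and follows essentially the same route as the paper: data processing down to the two-point law $\Pi_\delta$, and the Donsker--Varadhan bound with the antisymmetrised bounded field $\frac12\sum_k(\beta_k(x)+\beta_k(y))\chi(\langle c_k,y-x\rangle)$. This is the paper's test function with $\sin$ replaced by $\chi$, and since $\chi(0)=\chi''(0)=0$ and $\chi'(0)=1$, the truncation changes nothing on the diagonal; then come the short-time expansion and integration by parts through the covariance divergence, which turns $F$ into $F_{\mathrm{irr}}$. Your Step 4 likewise matches the paper's maximisation over finite initial segments of a countable dense set containing $0$, where the cylinder approximants of the selection indicators should be kept in $[0,1]$ and converge $\mu$-a.e. so that dominated convergence applies, followed by monotone convergence and the duality lemma; the stray Step 2 candidate $\eta(y)-\eta(x)$ only yields a gradient field with zero drift contribution and should simply be dropped.
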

\begin{proof}
For any $\delta>0$, we consider the mapping
\begin{equation}
    \pi_\delta\colon C([0,\delta];\mH)\to\mH\times\mH,\qquad \omega\mapsto(\omega_0,\omega_\delta),
\end{equation}
then by \cref{prop:entropy-time-linearity},
\begin{equation}\label{eq:two-point-data-processing}
    \epr=\frac1\delta\KLdiv{\Prob_{[0,\delta]}}{\Prob^{\mathrm{rev}}_{[0,\delta]}}
    \ge\frac1\delta\KLdiv{(\pi_\delta)_\#\Prob_{[0,\delta]}}{(\pi_\delta)_\#\Prob^{\mathrm{rev}}_{[0,\delta]}}=\frac{I(\delta)}{\delta},
\end{equation}
where the inequality is data processing for $\pi_\delta$.

To obtain a lower bound for $I(\delta)$, we make use of the variational inequality for relative entropy.
It can be shown that for every bounded Borel $\beta\colon\mH\times\mH\to\mR$,
\begin{equation}\label{eq:entropy-variational-bound}
    I(\delta)\ge\int \beta\md \Pi_\delta-\log\int \me^\beta\md \bar\Pi_\delta .
\end{equation}
Indeed, by the boundedness of $\beta$, we define the exponential tilt $\md \bar\Pi_\delta^\beta\coloneqq Z^{-1}\me^\beta\md \bar\Pi_\delta$ with the normalizer $Z\coloneqq\int \me^\beta\md \bar\Pi_\delta$.
If $I(\delta)=+\infty$, then \cref{eq:entropy-variational-bound} is immediate.
Otherwise, we have
\begin{equation}
    0\le\KLdiv{\Pi_\delta}{\bar\Pi_\delta^\beta} =\KLdiv{\Pi_\delta}{\bar\Pi_\delta}-\int \beta\md \Pi_\delta+\log Z,
\end{equation}
which rearranges to \cref{eq:entropy-variational-bound}.

It suffices to find an appropriate function $\beta$ to obtain estimations for both the two terms on the right-hand side of \cref{eq:entropy-variational-bound}.
Consider a smooth bounded cylinder function $\beta$ on $\mH\times\mH$ of the following form
\begin{equation}\label{eq:antisymmetric-two-point-test}
    \beta(x,y)\coloneqq\frac12\sum_{j=1}^n(f_j(x)+f_j(y))\sin\ip{h_j}{y-x}_\mH,\qquad\{h_j\}_{j=1}^n\subseteq\mH,\quad\{f_j\}_{j=1}^n\subseteq\FC.
\end{equation}
By direct calculation, we have $\beta(y,x)=-\beta(x,y)$, $\beta(x,x)=0$, and
\begin{equation}\label{eq:antisymmetric-test-diagonal-derivatives}
    \begin{aligned}
        D_2\beta(x,x)&=\sum_{j=1}^n f_j(x)h_j,\\
        (L_2\beta)(x,x)&=\sum_{j=1}^n\left(f_j(x)\ip{F(x)}{h_j}_\mH+\frac12\ip{a(x)h_j}{Df_j(x)}_\mH\right).
    \end{aligned}
\end{equation}
Applying \cref{lem:two-point-short-time-expansion} to $\beta$ yields
\begin{equation}\label{eq:antisymmetric-test-expectation}
    \int\beta\md\Pi_\delta=\delta\int_\mH (L_2\beta)(x,x)\mu(\md x) +o(\delta).
\end{equation}
By \cref{eq:reversed-two-point-law} and $\beta(x,x)=0$,
\begin{equation}
    \begin{aligned}
        \int\me^\beta\md\bar\Pi_\delta=\int\me^{-\beta}\md\Pi_\delta&=\int_\mH\me^{-\beta(x,x)}\mu(\md x)+\delta\int_\mH(L_2\me^{-\beta})(x,x)\mu(\md x)+o(\delta)\\
        &=1+\delta\int_\mH\left(-L_2\beta+\frac12\ip{aD_2\beta}{D_2\beta}_\mH\right)(x,x)\mu(\md x)+o(\delta).
    \end{aligned}
\end{equation}
Therefore, combining these expansions with \cref{eq:two-point-data-processing,eq:entropy-variational-bound} and taking $\delta\to0^+$, we obtain
\begin{equation}\label{eq:entropy-lower-bound-test-field}
    \epr\ge\int_\mH\left(2\ip{F_\irr}{V}_\mH-\frac12\ip{aV}{V}_\mH\right)\md\mu=\int_\mH Q_x(V(x))\mu(\md x),
\end{equation}
where $V(x)\coloneqq D_2\beta(x,x)$ and the term $\ip{F_\irr}{V}_\mH$ appears by the integration-by-parts identity
\begin{equation}
    \int_\mH (L_2\beta)(x,x)\mu(\md x) =\int_\mH\sum_{j=1}^n f_j\ip{F-\frac12d}{h_j}_\mH\md\mu =\int_\mH\ip{\Firr}{V}_\mH\md\mu
\end{equation}
according to \cref{asm:covariance-divergence}.

It remains to optimize over $V$.
Choose a countable dense set $\{h_k\}_{k=0}^\infty\subseteq\mH$ with $h_0=0$, and define
\begin{equation}
    q_n(x)\coloneqq\max_{0\le k\le n}Q_x(h_k),\qquad k_n(x)\coloneqq\min\{0\le k\le n:Q_x(h_k)=q_n(x)\}.
\end{equation}
Clearly, the sets $H_k\coloneqq\{k_n=k\}$ form a Borel partition, and $q_n\ge0$.
For each $k$, choose $f_{k,l}\in\FC$ such that $0\le f_{k,l}\le1$ and $f_{k,l}\to\mathbf1_{H_k}$ $\mu$-almost everywhere as $l\to+\infty$.
By setting $V_l^{(n)}\coloneqq\sum_{k=0}^n f_{k,l}h_k$, we have $V_l^{(n)}\to h_{k_n}$ $\mu$-almost everywhere, and
\begin{equation}
    \left|Q_x(V_l^{(n)}(x))\right| \le 2\sum_{k=0}^n\left|\ip{\Firr(x)}{h_k}_\mH\right| +\frac{n+1}2\sum_{k=0}^n\ip{h_k}{a(x)h_k}_\mH.
\end{equation}
The right-hand side is integrable by \cref{lem:generator-integrability-characterization,asm:covariance-divergence}.
Dominated convergence in \cref{eq:entropy-lower-bound-test-field} therefore yields $\epr\ge\int_\mH q_n\md\mu$ for every $n$.
Combined with density and continuity of $c\mapsto Q_x(c)$, monotone convergence and \cref{lem:cameron-martin-duality} now lead to
\begin{equation}
    \epr \ge\lim_{n\to\infty}\int_\mH q_n\md\mu =\int_\mH\sup_{c\in\mH}Q_x(c)\mu(\md x) =2\int_\mH g^2\md\mu,
\end{equation}
which completes our proof.
\end{proof}

The lower bound reveals two distinct mechanisms for infinite entropy production.
The irreversible drift may fail to belong to the local Cameron--Martin space on a set of positive $\mu$-measure, or it may belong to that space almost everywhere while its Cameron--Martin energy is not integrable.
Either mechanism forces $\epr=+\infty$.

\subsection{Sharpness in the regular regime}\label{sec:finite-energy-identity}

We now turn to the regular regime with the finite-energy condition \cref{eq:finite-cameron-martin-energy}.
We first identify the minimal shift in the noise space that realizes the drift difference $2\Firr$.
Girsanov's theorem is employed to obtain a matching upper bound, showing that the lower bound in \cref{thm:entropy-production-lower-bound} is sharp.
\begin{lemma}\label{lem:measurable-minimal-noise-control}
We define a Borel map $u\colon\mH\to \mU$ such that
\begin{equation}
    u(x)=
    \begin{cases}
        2B(x)^+\Firr(x),&x\in R,\\
        0,&\text{otherwise},
    \end{cases}
    \qquad R\coloneqq\left\{x\in\mH\mid \Firr(x)\in\Ran(a(x)^{1/2})\right\},
\end{equation}
where $B(x)^+$ denotes the Moore--Penrose inverse on its range.
Moreover, for $x\in R$ we have
\begin{equation}\label{eq:minimal-control-energy}
    B(x)u(x)=2\Firr(x),\qquad \norm{u(x)}_{\mU}^2 =4\norm{a(x)^{-1/2}\Firr(x)}_\mH^2 =4g(x)^2.
\end{equation}
\end{lemma}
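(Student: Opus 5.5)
The proof has two parts: the pointwise algebra in \cref{eq:minimal-control-energy}, and the Borel measurability of $R$ and $u$. For the algebra, fix $x$ and write $B=B(x)$, $a=BB^*$. The polar decomposition $B=a^{1/2}J$, with $J\colon\mU\to\mH$ a partial isometry whose initial space is $(\ker B)^\perp=\overline{\Ran B^*}$ and whose final space is $\overline{\Ran a^{1/2}}$, gives $\Ran(B)=\Ran(a^{1/2})$. Hence $R$ is exactly the set where $\Firr(x)\in\Ran B(x)$, and there the Moore--Penrose inverse $B^+$ is defined. By construction, $B^+\Firr$ is the unique preimage of $\Firr$ under $B$ lying in $(\ker B)^\perp$, which is also the minimum-norm preimage. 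This gives $Bu=2\Firr$.

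For the norm, set $v\coloneqq a^{-1/2}\Firr\in(\ker a)^\perp$. Then $J^*v\in(\ker B)^\perp$ and $BJ^*v=a^{1/2}JJ^*v=a^{1/2}v=\Firr$, using that $JJ^*$ is the projection onto $\overline{\Ran a^{1/2}}=(\ker a)^\perp$. Uniqueness of the preimage in $(\ker B)^\perp$ then forces $B^+\Firr=J^*v$. Since $J^*$ is isometric on $(\ker a)^\perp$, we get $\norm{B^+\Firr}_\mU=\norm{v}_\mH=g(x)$, and therefore $\norm{u}_\mU^2=4g^2$.

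Measurability is the main obstacle, because $B^+$ is unbounded and depends on $x$. The measurability of $g$ was already established in \cref{lem:cameron-martin-duality}, so $R=\{g<+\infty\}$ is Borel. For $u$ I would use a Tikhonov regularization,
\[
u_\varepsilon(x)\coloneqq 2B(x)^*\bigl(a(x)+\varepsilon I\bigr)^{-1}\Firr(x).
\]
Two facts are needed for this to work.
\begin{itemize}
\item Each $u_\varepsilon$ is Borel. The map $x\mapsto B(x)$ is Borel into $\mathcal L_2(\mU,\mH)$, so $x\mapsto a(x)$ is Borel in operator norm. The inverse $(a+\varepsilon I)^{-1}$ is a continuous function of $a$ on positive operators, because it is given by a norm-convergent resolvent/Neumann series. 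The composition with $\Firr$ and $B^*$ is therefore Borel, and separability of $\mU$ reduces the question to the weak measurability of $\ip{u_\varepsilon}{e}_\mU$ for each $e$ in a countable set.
\item $u_\varepsilon(x)\to u(x)$ in $\mU$ for $x\in R$. Writing $\Firr=a^{1/2}v$ with $v\in(\ker a)^\perp$, we have $u_\varepsilon=2J^*a(a+\varepsilon)^{-1}v$. By the spectral theorem and dominated convergence, $a(a+\varepsilon)^{-1}v\to v$ as $\varepsilon\to0^+$, so $u_\varepsilon\to2J^*v=u$.
\end{itemize}
Along $\varepsilon=1/n$, the map $u$ is then the pointwise limit of Borel maps on the Borel set $R$, and it is $0$ on $R^c$. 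Hence $u$ is Borel.
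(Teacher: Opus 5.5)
Your proof is correct. Its measurability half coincides with the paper's: $R=\{g<+\infty\}$ is Borel by \cref{lem:cameron-martin-duality}, and $u=\lim_n\mathbf 1_R u_{1/n}$ with the same Tikhonov regularization $u_\varepsilon=2B^*(a+\varepsilon I)^{-1}\Firr$.

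The difference is in the pointwise operator algebra. You factor $B=a^{1/2}J$ through the polar decomposition. This gives you, in one stroke:
\begin{itemize}
\item $\Ran B=\Ran(a^{1/2})$, which the paper only asserts;
\item the explicit formula $B^+=J^*a^{-1/2}$ on $\Ran B$;
\item the norm identity, from the isometry of $J^*$ on $(\ker a)^\perp$;
\item the convergence $u_\varepsilon=2J^*a(a+\varepsilon I)^{-1}v\to 2J^*v$, via spectral calculus of $a$ on $\mH$.
\end{itemize}

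The paper never introduces the partial isometry. For convergence it uses the push-through identity $B^*(BB^*+\varepsilon I)^{-1}B=B^*B(B^*B+\varepsilon I)^{-1}$ and spectral calculus of $B^*B$ on $\mU$, needing only $2B^+\Firr\in(\ker B)^\perp$. For the norm it completes the square in the dual formula of \cref{lem:cameron-martin-duality}:
\[
2g^2=\sup_{c}\bigl(\ip{u}{B^*c}_\mU-\tfrac12\norm{B^*c}_\mU^2\bigr)=\tfrac12\norm{u}_\mU^2-\tfrac12\inf_c\norm{u-B^*c}_\mU^2,
\]
where the infimum vanishes because $u\in\overline{\Ran B^*}$.

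Your route is more explicit and self-contained at the operator level. The paper's route reuses the variational characterization behind the lower bound. This makes it transparent that $u$ is exactly the noise shift whose cost $\tfrac12\norm{u}_\mU^2$ matches the bound $2g^2$ in \cref{thm:entropy-production-lower-bound}.
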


\begin{proof}
By \cref{lem:cameron-martin-duality}, the set $R=\{g^2<+\infty\}$ is Borel.
For $\varepsilon>0$, define
\begin{equation}
    u_\varepsilon(x)\coloneqq2B(x)^*\left(a(x)+\varepsilon I\right)^{-1}\Firr(x),
\end{equation}
which is Borel by the measurability of the coefficients and the continuity of the resolvent.

Fix $x\in R$ and suppress it from the notation.
Since $\Ran B=\Ran(a^{1/2})$, the vector $\widetilde u\coloneqq2B^+\Firr$ satisfies $B\tilde u=2\Firr$ and $\tilde u\in(\ker B)^\perp$.
The resolvent identity and spectral calculus give
\begin{equation}\label{eq:minimal-control-resolvent-limit}
    u_\varepsilon=B^*(BB^*+\varepsilon I)^{-1}B\widetilde u=B^*B(B^*B+\varepsilon I)^{-1}\widetilde u\to\tilde u\quad\text{in}\quad\mU\quad\text{as}\quad\varepsilon\to0^+.
\end{equation}
Therefore, the pointwise limit
\begin{equation}\label{eq:measurable-control-limit}
    u\coloneqq\lim_{n\to\infty}\mathbf1_Ru_{1/n}
\end{equation}
is Borel with the asserted values and satisfies $B(x)u(x)=2\Firr(x)$ for every $x\in R$.

It remains to identify the norm.
We still fix $x\in R$ and suppress it from the notation.
Since $Bu=2\Firr$ and $u\in(\ker B)^\perp=\overline{\Ran B^*}$, completing the square and applying \cref{lem:cameron-martin-duality} yield
\begin{equation}
    2g^2=\sup_{c\in\mH}\left(\ip{u}{B^*c}_\mU-\frac12\norm{B^*c}_\mU^2\right)=\frac12\norm{u}_\mU^2-\frac12\inf_{c\in\mH}\norm{u-B^*c}_\mU^2=\frac12\norm{u}_\mU^2,
\end{equation}
which implies \cref{eq:minimal-control-energy}.
\end{proof}

\begin{lemma}\label{lem:girsanov-entropy-identity}
Assume \cref{asm:stationary-diffusion,asm:cylinder-generator-integrability,asm:covariance-divergence,asm:reversed-diffusion,eq:finite-cameron-martin-energy} and fix $T>0$.
We define
\begin{equation}\label{eq:reversal-stochastic-exponential}
    \begin{aligned}
        M_t&\coloneqq\int_0^t\ip{u(X_s)}{\md W_s}_\mU,\\
        Z_t&\coloneqq\exp\left(-M_t-\frac12[M]_t\right)=\exp\left(-\int_0^t\ip{u(X_s)}{\md W_s}_\mU-\frac12\int_0^t\norm{u(X_s)}_\mU^2\md s\right)
    \end{aligned}
\end{equation}
for every $t\in[0,T]$, where $[M]_t$ stands for the quadratic variation.
If $(Z_t)_{0\le t\le T}$ is a true $\Prob$-martingale and the weak solutions of \cref{eq:stationary-reversed-diffusion} with initial law $\mu$ are unique in law, then
\begin{equation}\label{eq:finite-energy-entropy-identity}
    \Prob_{[0,T]} \sim \Prob^{\mathrm{rev}}_{[0,T]}, \qquad \epr =\frac1T\KLdiv{\Prob_{[0,T]}}{\Prob^{\mathrm{rev}}_{[0,T]}} =2\int_\mH g^2\md\mu <+\infty.
\end{equation}
\end{lemma}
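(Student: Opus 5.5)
The plan is to apply \cref{thm:girsanov-change-of-measure} with $\psi_t=-u(X_t)$. First note that $\psi$ is admissible: by \cref{lem:measurable-minimal-noise-control} and stationarity,
\[
\E\int_0^T\norm{u(X_s)}_\mU^2\md s=4T\int_\mH g^2\md\mu<+\infty .
\]
Moreover, \cref{eq:finite-cameron-martin-energy} gives $\mu(R)=1$, so $B(X_s)u(X_s)=2\Firr(X_s)$ for a.e. $s$ almost surely. Under $\widehat\Prob^{-u}_T$ the process $X$ therefore solves $\md X_t=(F-2\Firr)(X_t)\md t+B(X_t)\md\widehat W_t$, which is the reversed equation \cref{eq:stationary-reversed-diffusion}, with initial law $\mu$. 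By \cref{asm:reversed-diffusion}, the stationary reversal $\Prob^{\mathrm{rev}}_{[0,T]}$ is also a weak solution of this equation started from $\mu$. Uniqueness in law then identifies the two laws, $\widehat\Prob^{-u}_{[0,T]}=\Prob^{\mathrm{rev}}_{[0,T]}$. The equivalence $\Prob_{[0,T]}\sim\Prob^{\mathrm{rev}}_{[0,T]}$ now follows from \cref{thm:girsanov-change-of-measure}.

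For the entropy value, I would compute
\[
\KLdiv{\Prob_{[0,T]}}{\Prob^{\mathrm{rev}}_{[0,T]}}=-\E_\Prob\bigl[\log\rho_T(X|_{[0,T]})\bigr],\qquad \rho_T=\E_\Prob[Z_T\mid\mathcal F_T^X].
\]
Conditional Jensen (concavity of $\log$) gives
\[
-\log\E[Z_T\mid\mathcal F^X_T]\le \E[-\log Z_T\mid\mathcal F^X_T],
\]
and hence
\[
\KLdiv{\Prob_{[0,T]}}{\Prob^{\mathrm{rev}}_{[0,T]}}\le\E\Bigl[M_T+\tfrac12[M]_T\Bigr]=\tfrac12\E\int_0^T\norm{u(X_s)}^2_\mU\md s=2T\int_\mH g^2\md\mu .
\]
This uses $\E M_T=0$, which holds because $M$ is an $L^2$ martingale by the integrability bound above. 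Combined with \cref{thm:entropy-production-lower-bound}, which needs only the first three assumptions, this yields equality and finiteness.

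The main obstacle is the subtle identification step. One must check that the process solving the Girsanov-shifted equation under $\widehat\Prob$ really is a weak solution in the sense used for uniqueness, with adaptedness, the cylindrical Wiener property and the integrability \cref{eq:weak-solution-integrability}. One must also check that the reversed stationary process is a weak solution of the same equation, so that the two laws can be compared. The Bochner integrability of $\bar F(X_s)=F(X_s)-B(X_s)u(X_s)$ follows from that of $F$ together with
\[
\norm{B u}_\mH\le\norm{B}_{\HS}\norm{u}_\mU
\]
and Cauchy--Schwarz in time, since $\int_0^T\norm{B(X_s)}^2_{\HS}\md s<\infty$ and $\int_0^T\norm{u}^2\md s<\infty$ almost surely. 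The Jensen step avoids any need to compute the conditional expectation of $Z_T$ explicitly.
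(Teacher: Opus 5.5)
Your proposal is correct and follows essentially the same route as the paper. You apply Girsanov with $\psi_s=-u(X_s)$ and use uniqueness in law to identify the tilted path law with $\Prob^{\mathrm{rev}}_{[0,T]}$; you then bound the entropy via conditional Jensen by $\E_\Prob[-\log Z_T]=2T\int_\mH g^2\md\mu$, and match it with \cref{thm:entropy-production-lower-bound}, which is exactly the paper's argument (your extra check that $F-Bu$ is Bochner integrable via $\norm{Bu}_\mH\le\norm{B}_{\HS}\norm{u}_\mU$ is a harmless detail the paper leaves implicit in \cref{thm:girsanov-change-of-measure}).
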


\begin{proof}
By stationarity and \cref{eq:minimal-control-energy},
\begin{equation}\label{eq:stationary-control-energy}
    \E_\Prob\left[ \int_0^T\norm{u(X_s)}_\mU^2\md s \right] =T\int_\mH\norm{u}_\mU^2\md\mu =4T\int_\mH g^2\md\mu <+\infty.
\end{equation}
Hence $M$ is a square-integrable martingale and $\E_\Prob[M_T]=0$.
Moreover, $\mu(R)=1$ by \cref{eq:finite-cameron-martin-energy}, so stationarity and Fubini's theorem give $B(X_s)u(X_s)=2\Firr(X_s)$ for $\md s\otimes\Prob$-almost every $(s,\omega)$.

Define $\md\widehat\Prob_T=Z_T\md\Prob$ on $\mathcal F_T$.
By \cref{thm:girsanov-change-of-measure}, applied with $\psi_s=-u(X_s)$, we have $\widehat\Prob_T\sim\Prob$, and under $\widehat\Prob_T$ the process $X$ solves \cref{eq:stationary-reversed-diffusion} with initial law $\mu$.
Uniqueness in law therefore yields
\begin{equation}\label{eq:tilted-reversed-path-law}
    \widehat\Prob_T\circ\bigl(X|_{[0,T]}\bigr)^{-1} =\Prob^{\mathrm{rev}}_{[0,T]}\sim\Prob_{[0,T]}.
\end{equation}

The path-space density formula in \cref{thm:girsanov-change-of-measure} and conditional Jensen give
\begin{equation}\label{eq:entropy-upper-bound}
    \begin{aligned}
        \KLdiv{\Prob_{[0,T]}}{\Prob^{\mathrm{rev}}_{[0,T]}} &= \E_\Prob\left[ -\log\E_\Prob[Z_T\mid\mathcal F_T^X] \right]\\
        &\le \E_\Prob[-\log Z_T] =\frac12\E_\Prob\left[ \int_0^T\norm{u(X_s)}_\mU^2\md s \right]=2T\int_\mH g^2\md\mu.
    \end{aligned}
\end{equation}
Combining this with \cref{prop:entropy-time-linearity,thm:entropy-production-lower-bound} proves \cref{eq:finite-energy-entropy-identity}.
\end{proof}
Under the assumptions of \cref{lem:girsanov-entropy-identity}, equality holds in the conditional Jensen inequality in \cref{eq:entropy-upper-bound}.
Strict convexity of $r\mapsto-\log r$ and the density formula in \cref{thm:girsanov-change-of-measure} therefore give
\begin{equation}\label{eq:state-path-reversal-density}
    Z_T =\E_\Prob\left[Z_T\mid\mathcal F_T^X\right] = \frac{\md\Prob^{\mathrm{rev}}_{[0,T]}} {\md\Prob_{[0,T]}} \bigl(X|_{[0,T]}\bigr) \qquad\Prob\text{-a.s.}
\end{equation}
Thus $Z_T$ admits a state-path-measurable version as a consequence of the entropy identity, rather than an additional assumption.

Now we give a sufficient condition of Lipschitz continuity for the true-martingale condition required in \cref{lem:girsanov-entropy-identity}.

\begin{proposition}\label{prop:reversed-lipschitz-martingale}
Assume \cref{asm:stationary-diffusion,asm:cylinder-generator-integrability,asm:covariance-divergence,asm:reversed-diffusion} and the regular condition \cref{eq:finite-cameron-martin-energy}.
If $\bar F:\mH\to\mH$ and $B:\mH\to\HS(\mU,\mH)$ are Lipschitz on bounded sets, then the process $(Z_t)_{0\le t\le T}$ defined in \cref{eq:reversal-stochastic-exponential} is a true $\Prob$-martingale for every $T>0$.
Consequently, \cref{eq:finite-energy-entropy-identity} holds since the assumed Lipschitz continuity of $\bar F$ and $B$, together with \cref{asm:reversed-diffusion}, imply uniqueness in law from $\mu$.
\end{proposition}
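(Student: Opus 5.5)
We localize to bounded sets, where Girsanov's theorem applies without any Novikov-type bound, and then remove the localization by showing that the tilted process cannot explode. For $N\in\mathbb N$ set $\tau_N\coloneqq\inf\{t\ge0:\|X_t\|_\mH\ge N\}\wedge T$, and consider the stopped exponential $Z_{t\wedge\tau_N}$, driven by $\psi^N_s\coloneqq-u(X_s)\mathbf 1_{s\le\tau_N}$. The energy $\int_0^T\|\psi^N_s\|^2\md s$ is not bounded, since $u$ need not be bounded on balls. Integrability alone therefore does not give Novikov's condition, so we would not pursue that route.

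Instead we use the standard characterization for nonnegative local martingales. Since $Z$ is a positive supermartingale with $Z_0=1$, it is a true martingale on $[0,T]$ if and only if $\E_\Prob[Z_T]=1$. Moreover,
\begin{equation*}
\E_\Prob[Z_T]\ge\E_\Prob\bigl[Z_{\tau_N}\mathbf 1_{\{\tau_N=T\}}\bigr]=\E_\Prob[Z_{\tau_N}]-\E_\Prob\bigl[Z_{\tau_N}\mathbf 1_{\{\tau_N<T\}}\bigr].
\end{equation*}
To make $\E_\Prob[Z_{\tau_N}]=1$, we further localize by $\sigma_K\coloneqq\inf\{t:\int_0^t\|u(X_s)\|^2\md s\ge K\}$. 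This is the usual Kabanov--Liptser--Shiryaev/Ruf argument: define $\widehat\Prob^{N,K}$ via $Z_{\tau_N\wedge\sigma_K}$, which is a true martingale by Novikov. It then suffices to show that $\widehat\Prob^{N,K}(\tau_N\wedge\sigma_K<T)\to0$ as $N,K\to\infty$, uniformly in the appropriate sense. Equivalently, under the consistent family of tilted measures the process must not explode, and the control energy must stay finite, before time $T$.

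By Theorem~\ref{thm:girsanov-change-of-measure}, under $\widehat\Prob^{N,K}$ the process $X$ solves $\md X=\bar F(X)\md t+B(X)\md\widehat W$ up to $\tau_N\wedge\sigma_K$, because $B u=2\Firr$ on $R$ and $\mu(R)=1$. Since $\bar F$ and $B$ are Lipschitz on bounded sets, the reversed equation has pathwise uniqueness up to exit from balls (Yamada--Watanabe in Hilbert space), and hence uniqueness in law up to $\tau_N$. By Assumption~\ref{asm:reversed-diffusion}, the stationary reversed process $\bar X$ from $\mu$ is a global solution with continuous paths. Uniqueness up to $\tau_N$ therefore identifies the law of $X$ stopped at $\tau_N$ under the tilt with the law of $\bar X$ stopped at its own exit time. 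This gives
\begin{equation*}
\widehat\Prob(\tau_N<T)=\Prob(\bar\tau_N<T)\to0,
\end{equation*}
since continuous paths are bounded on $[0,T]$. For $\sigma_K$, the energy $\int_0^T\|u(\bar X_s)\|^2\md s$ has expectation $4T\int g^2\md\mu<\infty$ by stationarity of $\bar X$. Hence $\Prob(\bar\sigma_K<T)\to0$, and this probability controls $\widehat\Prob(\sigma_K<T)$. Letting $K,N\to\infty$ gives $\E_\Prob[Z_T]=1$.

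Uniqueness in law from $\mu$ for the global reversed equation follows from the same local pathwise uniqueness, patched across $\tau_N\uparrow\infty$, using non-explosion of any solution started from $\mu$. Lemma~\ref{lem:girsanov-entropy-identity} then yields \eqref{eq:finite-energy-entropy-identity}.

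The main obstacle is the identification step. One must check that local Lipschitz continuity of $\bar F$ and $B$ yields local uniqueness in law in infinite dimensions for weak solutions with Hilbert--Schmidt noise, and that the functional $\int\|u\|^2\md s$ transfers correctly under this identification. For the latter we would note that $u$ is a Borel function of the path, so it is determined by the stopped law.
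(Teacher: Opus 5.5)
Your proposal is correct and follows essentially the paper's route: Novikov for the energy-localized exponential, Girsanov to put $X$ on the reversed equation up to the stopping time, identification with the stationary reversal through local-Lipschitz uniqueness, and Markov's inequality with $\E\int_0^T\|u(\bar X_s)\|_\mU^2\md s=4T\int_\mH g^2\md\mu$ to force $\E_\Prob[Z_T]=1$. Three remarks on the differences. First, the ball localization $\tau_N$ is superfluous. Second, the step you flag is closed in the paper as follows: global pathwise uniqueness, the weak solution from \cref{asm:reversed-diffusion}, and Ondrej\'at's Yamada--Watanabe theorem give a global strong solution $Y^{(n)}$ driven by the tilted noise, and $Y^{(n)}$ agrees pathwise with $X$ up to the stopping time. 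Third, what transfers is $\widehat\Prob^{N,K}(\tau_N\wedge\sigma_K<T)=\Prob(\bar\tau_N\wedge\bar\sigma_K<T)$, not $\widehat\Prob(\tau_N<T)=\Prob(\bar\tau_N<T)$, because after $\sigma_K$ the tilted $X$ follows the forward dynamics.
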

\begin{proof}
For fixed $T>0$, we define
\begin{equation}
    A_t\coloneqq\int_0^t\norm{u(X_s)}_\mU^2\md s,\qquad\tau_n\coloneqq\inf\{t>0\mid A_t\ge n\},\quad\forall n\in\mN,
\end{equation}
then by \cref{eq:stationary-control-energy}, $A_T<+\infty$ $\Prob$-almost surely.
Since $A_{T\wedge\tau_n}\le n$, Novikov's criterion shows that $(Z_{t\wedge\tau_n})_{0\le t\le T}$ is a true martingale.
For each $n\in\mN$, we define a measure $\mQ_n$ such that $\md\mQ_n\coloneqq Z_{T\wedge\tau_n}\md\mP$ on $\mathcal F_T$.
By \cref{thm:girsanov-change-of-measure}, $\mQ_n\sim\mP$ on $\mathcal F_T$ with the same initial distribution $\mu$ and the process
\begin{equation}
    W_t^{(n)}\coloneqq W_t+\int_0^{t\wedge \tau_n}u(X_s)\md s
\end{equation}
is a cylindrical Wiener process under $\mQ_n$.
Consequently, $X$ solves the reversed equation
\begin{equation}
    \md X_t=\bar F(X_t)\md t+B(X_t)\md W_t^{(n)}
\end{equation}
up to $T\wedge\tau_n$ by $\Prob$-stationarity and \cref{eq:minimal-control-energy}.

The Lipschitz bounds imply pathwise uniqueness up to the first time either solution leaves a ball.
Since continuous paths are bounded on $[0,T]$, letting the radius tend to infinity proves pathwise uniqueness for global solutions.
By \cref{asm:reversed-diffusion}, the reversed equation already has a global weak solution with initial law $\mu$.
Hence \citet[Theorem~2]{Ondrejat2004} implies strong existence and uniqueness in law for this initial distribution.
Let $Y^{(n)}$ be the corresponding strong solution on $[0,T]$ under $\mQ_n$, driven by $W^{(n)}$ and with $Y_0^{(n)}=X_0$.
The same stopped uniqueness argument shows that $X_t=Y_t^{(n)}$ for all $0\le t\le T\wedge\tau_n$, $\mQ_n$-almost surely.
By uniqueness in law and \cref{asm:reversed-diffusion}, $Y^{(n)}|_{[0,T]}$ has law $\mP_{[0,T]}^\rev$ under $\mQ_n$.
Consequently, we have
\begin{equation}
    \begin{aligned}
        \mQ_n\left\{\tau_n>T\right\}&=\mP_{[0,T]}^\rev\left\{\int_0^T\norm{u(\omega_s)}_\mU^2\md s<n\right\}\\
        &\ge1-\frac1n\E_{\mP_{[0,T]}^\rev}\int_0^T\norm{u(\omega_s)}_\mU^2\md s=1-\frac{4T}{n}\int_\mH g^2\md\mu\to1
    \end{aligned}
\end{equation}
as $n\to\infty$.
Therefore,
\begin{equation}
    \E_{\mP}[Z_T]=\lim_{n\to\infty}\E_{\mP}\left[Z_T\mathbf 1_{\{\tau_n>T\}}\right]=\lim_{n\to\infty}\E_{\mP}\left[Z_{T\wedge\tau_n}\mathbf 1_{\{\tau_n>T\}}\right]
    =\lim_{n\to\infty}\mQ_n\{\tau_n>T\}=1
\end{equation}
by the definition of $\mQ_n$, which completes our proof.
\end{proof}

Alternatively, if we assume the Lipschitz continuity of the forward drift instead of the reversed drift, then we can directly obtain the explicit formula of the EPR value when the regular condition \cref{eq:finite-cameron-martin-energy} holds.

\begin{proposition}\label{prop:forward-lipschitz-entropy-identity}
Assume \cref{asm:stationary-diffusion,asm:cylinder-generator-integrability,asm:covariance-divergence,asm:reversed-diffusion} and the regular condition \cref{eq:finite-cameron-martin-energy}.
If $F$ and $B$ are Lipschitz on bounded sets, then \cref{eq:finite-energy-entropy-identity} holds.
\end{proposition}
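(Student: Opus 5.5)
The plan is to run the argument of \cref{prop:reversed-lipschitz-martingale} in the opposite direction. We change measure on the \emph{reversed} diffusion so that it becomes the forward one, and then use the symmetry of relative entropy under the involution $\mathfrak r_T$. The point is that $\mathfrak r_T$ is a bijection with $(\mathfrak r_T)_\#\Prob_{[0,T]}=\Prob^{\mathrm{rev}}_{[0,T]}$ and $(\mathfrak r_T)_\#\Prob^{\mathrm{rev}}_{[0,T]}=\Prob_{[0,T]}$. Hence
\begin{equation*}
\KLdiv{\Prob_{[0,T]}}{\Prob^{\mathrm{rev}}_{[0,T]}}=\KLdiv{\Prob^{\mathrm{rev}}_{[0,T]}}{\Prob_{[0,T]}},
\end{equation*}
so it suffices to prove $\Prob^{\mathrm{rev}}_{[0,T]}\sim\Prob_{[0,T]}$ together with the upper bound $\KLdiv{\Prob^{\mathrm{rev}}_{[0,T]}}{\Prob_{[0,T]}}\le 2T\int_\mH g^2\md\mu$. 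The matching lower bound then comes from \cref{thm:entropy-production-lower-bound}, and linearity in $T$ from \cref{prop:entropy-time-linearity}.

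\textbf{Step 1: the reversed exponential and its localization.} By \cref{asm:reversed-diffusion}, realize $\bar X$ on a filtered space $(\bar\Omega,\bar{\mathcal F},(\bar{\mathcal F}_t),\bar\Prob)$. It is driven by $\widetilde W$, is $\mu$-stationary, and has law $\Prob^{\mathrm{rev}}_{[0,T]}$ on $[0,T]$. Define
\begin{equation*}
\bar Z_t\coloneqq\exp\left(\int_0^t\ip{u(\bar X_s)}{\md\widetilde W_s}_\mU-\frac12\int_0^t\norm{u(\bar X_s)}_\mU^2\md s\right),
\end{equation*}
with $u$ from \cref{lem:measurable-minimal-noise-control}. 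Stationarity of $\bar X$ under $\mu$ gives $\E_{\bar\Prob}\int_0^T\norm{u(\bar X_s)}^2_\mU\md s=4T\int g^2\md\mu<+\infty$. Since $\mu(R)=1$, we also have $B(\bar X_s)u(\bar X_s)=2\Firr(\bar X_s)$ for $\md s\otimes\bar\Prob$-a.e.\ $(s,\omega)$. Therefore, after applying \cref{thm:girsanov-change-of-measure} with $\psi_s=u(\bar X_s)$, the drift becomes $\bar F+2\Firr=F$.

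Let $\bar\tau_n\coloneqq\inf\{t:\int_0^t\norm{u(\bar X_s)}^2\md s\ge n\}$. Novikov's criterion makes $\bar Z_{\cdot\wedge\bar\tau_n}$ a true martingale. Set $\md\bar\mQ_n=\bar Z_{T\wedge\bar\tau_n}\md\bar\Prob$. Under $\bar\mQ_n$, $\bar X$ solves the forward equation \cref{eq:stationary-diffusion} up to $T\wedge\bar\tau_n$, with initial law $\mu$.

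\textbf{Step 2: uniqueness for the forward equation and the true-martingale property.} Because $F$ and $B$ are Lipschitz on bounded sets, we get pathwise uniqueness up to exit from balls, hence for global continuous solutions. \Cref{asm:stationary-diffusion} supplies a global weak solution with initial law $\mu$. \citet[Theorem~2]{Ondrejat2004} then gives strong existence and uniqueness in law from $\mu$.

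As in \cref{prop:reversed-lipschitz-martingale}, let $Y^{(n)}$ be the strong forward solution under $\bar\mQ_n$ driven by the shifted noise, with $Y^{(n)}_0=\bar X_0$. It coincides with $\bar X$ on $[0,T\wedge\bar\tau_n]$ and has law $\Prob_{[0,T]}$. Hence
\begin{equation*}
\bar\mQ_n\{\bar\tau_n>T\}=\Prob_{[0,T]}\left\{\int_0^T\norm{u(\omega_s)}^2_\mU\md s<n\right\}\ge1-\frac{4T}{n}\int g^2\md\mu\to1,
\end{equation*}
using stationarity of the forward process. The same limiting computation as before yields $\E_{\bar\Prob}\bar Z_T=1$. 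So $\bar Z$ is a true martingale, and by \cref{thm:girsanov-change-of-measure} and uniqueness in law the tilted state-path law is exactly $\Prob_{[0,T]}$, with $\Prob_{[0,T]}\sim\Prob^{\mathrm{rev}}_{[0,T]}$.

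\textbf{Step 3: entropy bound.} The density formula \cref{eq:girsanov-path-density} (now on the reversed space) and conditional Jensen give
\begin{equation*}
\KLdiv{\Prob^{\mathrm{rev}}_{[0,T]}}{\Prob_{[0,T]}}=\E_{\bar\Prob}\bigl[-\log\E_{\bar\Prob}[\bar Z_T\mid\mathcal F^{\bar X}_T]\bigr]\le\E_{\bar\Prob}[-\log\bar Z_T]=2T\int g^2\md\mu.
\end{equation*}
Here the stochastic integral has mean zero since it is square integrable. Combining this with the reversal symmetry of relative entropy and \cref{thm:entropy-production-lower-bound} gives \cref{eq:finite-energy-entropy-identity}.

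\textbf{Expected main obstacle.} The delicate point is Step 2. We must check that \cref{thm:girsanov-change-of-measure} and Ondrejat's theorem apply on the reversed filtered space. This requires that $\bar X$ be a genuine weak solution with respect to a filtration for which $\widetilde W$ is cylindrical Wiener, with the integrability \cref{eq:weak-solution-integrability}; this is what \cref{asm:reversed-diffusion} is taken to provide. It also requires that the stopped identification $\bar X=Y^{(n)}$ on $[0,T\wedge\bar\tau_n]$ really follows from local pathwise uniqueness.
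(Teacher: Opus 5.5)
Your proof is correct. Its core is the same as the paper's: the localized Girsanov tilt of the reversed diffusion by $\bar Z$, strong existence and uniqueness in law for the forward equation via \citet[Theorem~2]{Ondrejat2004}, and the estimate $\bar\mQ_n\{\bar\tau_n>T\}\ge 1-\frac{4T}{n}\int_\mH g^2\md\mu$, which gives $\E_{\bar\Prob}\bar Z_T=1$.

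You differ from the paper only in the final entropy bound.

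\textbf{The paper's route.} The paper keeps $\KLdiv{\Prob_{[0,T]}}{\Prob^{\mathrm{rev}}_{[0,T]}}$ and bounds it by data processing from $\KLdiv{\mQ}{\bar\Prob}=\E_{\mQ}[\log\bar Z_T]$. To evaluate this it writes $\log\bar Z_T=\int_0^T\ip{u(\bar X_s)}{\md W^{\mQ}_s}_\mU+\frac12\bar A_T$ in terms of the $\mQ$-Wiener process. It then needs $\E_{\mQ}\bar A_T<+\infty$, which uses the already established fact that the $\mQ$-law of $\bar X$ is the stationary forward law $\Prob_{[0,T]}$.

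\textbf{Your route.} You observe that $\mathfrak r_T$ is a bimeasurable involution exchanging $\Prob_{[0,T]}$ and $\Prob^{\mathrm{rev}}_{[0,T]}$, so the two relative entropies coincide. You then bound $\KLdiv{\Prob^{\mathrm{rev}}_{[0,T]}}{\Prob_{[0,T]}}$ under the base measure $\bar\Prob$ by conditional Jensen, exactly as in \cref{lem:girsanov-entropy-identity}. The stochastic integral has mean zero there directly because $\bar X$ is $\mu$-stationary under $\bar\Prob$, which \cref{asm:reversed-diffusion} supplies.

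\textbf{What each buys.} Your argument makes the proposition a mirror image of \cref{lem:girsanov-entropy-identity}. Combined with the lower bound, equality in Jensen also gives that $\bar Z_T$ is $\mathcal F^{\bar X}_T$-measurable, matching the remark after that lemma. The cost is the extra symmetry observation. The paper's argument avoids that observation but needs integrability under the tilted measure. Both use uniqueness in law to identify the tilted path law, so neither is strictly more economical.
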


\begin{proof}
Fix $T>0$.
As in the proof of \cref{prop:reversed-lipschitz-martingale}, the Lipschitz continuity of $F,B$, together with the global weak solution supplied by \cref{asm:stationary-diffusion}, yields strong existence and uniqueness in law for the forward equation with initial distribution $\mu$.

Work on a probability space carrying the stationary reversed solution $\bar X$ from \cref{asm:reversed-diffusion}, with cylindrical Wiener process $\bar W$ and probability measure $\bar\Prob$.
Define
\begin{equation}
    \bar A_t=\int_0^t\norm{u(\bar X_s)}_\mU^2\,\md s, \qquad \bar Z_t= \exp\left( \int_0^t\ip{u(\bar X_s)}{\md\bar W_s}_\mU -\frac12\bar A_t \right).
\end{equation}
By stationarity and \cref{eq:stationary-control-energy},
\begin{equation}
    \E_{\bar\Prob}\bar A_T=\E_{\Prob_{[0,T]}} \int_0^T\norm{u(\omega_s)}_\mU^2\,\md s=4T\int_\mH g^2\md\mu<+\infty.
\end{equation}
The localized Girsanov argument in the proof of \cref{prop:reversed-lipschitz-martingale} therefore applies with the forward and reversed equations interchanged and $u$ replaced by $-u$: the tilted drift is $\bar F+Bu=F$, and the control energy is integrable under the target stationary path law $\Prob_{[0,T]}$.
Consequently, $(\bar Z_t)_{0\le t\le T}$ is a true $\bar\Prob$-martingale.

Define $\mQ$ on $\mathcal F_T$ by $\md\mQ=\bar Z_T\md\bar\Prob$.
Under $\mQ$, the process
\begin{equation}
    W_t^{\mQ}\coloneqq\bar W_t-\int_0^t u(\bar X_s)\md s
\end{equation}
is a cylindrical Wiener process, and $\bar X$ solves the forward equation
\begin{equation}
    \md\bar X_t=F(\bar X_t)\md t+B(\bar X_t)\md W_t^\mQ
\end{equation}
with initial law $\mu$.
By uniqueness in law, its path law under $\mQ$ is $\Prob_{[0,T]}$.
Since $\bar Z_T>0$, the measures $\mQ$ and $\bar\Prob$ are equivalent on $\mathcal F_T$, and hence their induced path laws are equivalent.
Moreover, we have
\begin{equation}
    \log\bar Z_T=\int_0^T\ip{u(\bar X_s)}{\md W_s^{\mQ}}_\mU+\frac12\bar A_T.
\end{equation}
Since $\E_{\mQ}\bar A_T<+\infty$ by stationarity, the stochastic integral is a square-integrable $\mQ$-martingale, and thus
\begin{equation}
    \E_\mQ\left[\log\bar Z_T\right]=\frac12\E_\mQ\left[\bar A_T\right]=2T\int_\mH g^2\md\mu.
\end{equation}
Consequently, the data-processing inequality implies
\begin{equation}
    \KLdiv{\Prob_{[0,T]}}{\Prob^{\mathrm{rev}}_{[0,T]}}\le\KLdiv{\mQ}{\bar\Prob}= \E_{\mQ}\left[\log\bar Z_T\right]=2T\int_\mH g^2\md\mu.
\end{equation}
The reverse inequality follows from \cref{thm:entropy-production-lower-bound} and \cref{prop:entropy-time-linearity}, completing the proof.
\end{proof}
Combining \cref{thm:entropy-production-lower-bound,lem:girsanov-entropy-identity,prop:reversed-lipschitz-martingale,prop:forward-lipschitz-entropy-identity} yields the main theorem of this paper.

\begin{theorem}\label{thm:entropy-production-dichotomy}
Assume \cref{asm:stationary-diffusion,asm:cylinder-generator-integrability,asm:covariance-divergence}.
\begin{itemize}
    \item If \cref{eq:finite-cameron-martin-energy} fails, then $\epr=+\infty$.
    \item If \cref{eq:finite-cameron-martin-energy,asm:reversed-diffusion} hold, and if the diffusion coefficient $B$ and at least one of the drifts $F$ and $\bar F$ are Lipschitz on bounded sets, then for every $T>0$,
    \begin{equation}
        \Prob_{[0,T]}\sim\Prob^{\mathrm{rev}}_{[0,T]}\quad\text{and}\quad\epr=2\int_\mH g^2\md\mu<+\infty.
    \end{equation}
\end{itemize}
\end{theorem}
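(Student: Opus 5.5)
The theorem is essentially an assembly of results already established, so the plan is to split according to the two bullet points and invoke the corresponding earlier statements.

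For the first bullet, assume \cref{asm:stationary-diffusion,asm:cylinder-generator-integrability,asm:covariance-divergence}. Failure of \cref{eq:finite-cameron-martin-energy} means $\int_\mH g^2\md\mu=+\infty$. This covers both cases: $g=+\infty$ on a set of positive $\mu$-measure, or $g$ finite a.e.\ but not square integrable. \Cref{thm:entropy-production-lower-bound} then gives $\epr\ge 2\int_\mH g^2\md\mu=+\infty$ directly. No reversal hypothesis is needed here, because the lower bound was proved using only the two-point test and the forward It\^o formula.

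For the second bullet, also assume \cref{asm:reversed-diffusion} and the regular condition, and that $B$ is Lipschitz on bounded sets. I would split according to which drift is Lipschitz.

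If $\bar F$ is Lipschitz on bounded sets, \cref{prop:reversed-lipschitz-martingale} gives two things:
\begin{itemize}
\item the exponential $Z$ from \cref{eq:reversal-stochastic-exponential} is a true martingale on every $[0,T]$;
\item the reversed equation is unique in law from $\mu$.
\end{itemize}
These are exactly the extra hypotheses of \cref{lem:girsanov-entropy-identity}, which then yields $\Prob_{[0,T]}\sim\Prob^{\mathrm{rev}}_{[0,T]}$ and $\epr=2\int g^2\md\mu<+\infty$.

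If instead $F$ is Lipschitz on bounded sets, \cref{prop:forward-lipschitz-entropy-identity} gives the entropy identity directly. Its proof builds $\Prob_{[0,T]}$ as the image of $\mQ$, which is equivalent to $\bar\Prob$, and $\bar\Prob$ induces $\Prob^{\mathrm{rev}}_{[0,T]}$. So the equivalence of path laws also follows.

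The only point needing care is the equivalence statement in the $F$-Lipschitz case, since \cref{prop:forward-lipschitz-entropy-identity} literally asserts \cref{eq:finite-energy-entropy-identity}, which includes $\sim$. I would simply note that the push-forward of the equivalent pair $(\mQ,\bar\Prob)$ under the path map gives $\Prob_{[0,T]}\sim\Prob^{\mathrm{rev}}_{[0,T]}$.

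The result holds for every $T>0$ because each cited statement is proved for arbitrary fixed $T$, and \cref{prop:entropy-time-linearity} makes $\epr$ independent of $T$. I expect no genuine obstacle beyond checking that the hypotheses of the cited results match exactly.
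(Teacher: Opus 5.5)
Your proposal is correct and is essentially the paper's own argument. The paper likewise gets the first bullet from \cref{thm:entropy-production-lower-bound}, and the second by combining \cref{lem:girsanov-entropy-identity} with \cref{prop:reversed-lipschitz-martingale} when $\bar F$ is Lipschitz, or by invoking \cref{prop:forward-lipschitz-entropy-identity} (whose proof already records the equivalence of the induced path laws) when $F$ is Lipschitz.
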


\section{Time reversal}\label{sec:time-reversal}
This section gives two criteria for the diffusion representation in \cref{asm:reversed-diffusion}.
The formal identities $\bar F=F-2\Firr=d-F$ and $\bar L=L_{\mathrm{rev}}-L_{\mathrm{irr}}$ established in \cref{prop:reversible-irreversible-decomposition} have provided candidates for the drift and the generator, and the remaining task is to show that they in fact correspond to the real stationary reversal.

Both criteria are based on the abstract integration-by-parts result of \citet{CCGL2023}.
\Cref{sec:constant-diffusion-time-reversal} treats constant diffusion coefficients and isolates the additional path integrability needed to recover an $\mH$-valued equation.
\Cref{sec:state-dependent-time-reversal} works directly on $\mH$ for continuous state-dependent diffusion coefficients, under finite Cameron--Martin energy and additional norm-integrability assumptions.
The entropy conclusions then follow under the corresponding conditions of \cref{thm:entropy-production-dichotomy}.

\subsection{Constant diffusion coefficient}\label{sec:constant-diffusion-time-reversal}
For a constant diffusion coefficient, the integration-by-parts structure in \cref{asm:covariance-divergence} identifies the reversed coordinate dynamics.
The following result also characterizes the additional path integrability needed to recover an $\mH$-valued equation.
Its proof uses the abstract criterion of \citet[Theorem~3.6]{CCGL2023}, without a generator-core, sector, or path-space entropy assumption.

Throughout this subsection, we assume that $B(x)\equiv B$ is constant.
Choose a complete orthonormal eigenbasis $(e_k)_{k\ge0}$ of $\mH$ such that
\begin{equation}\label{eq:constant-covariance-spectrum}
    ae_k=q_ke_k, \qquad q_k\geq0, \qquad \Tr a=\sum_{k\ge0}q_k<+\infty.
\end{equation}
Let $\bar X$ denote the canonical process with the consistent stationary reversed path laws.

\begin{theorem}\label{thm:constant-diffusion-time-reversal}
Suppose that \cref{asm:stationary-diffusion,asm:cylinder-generator-integrability,asm:covariance-divergence} hold and that $B(x)\equiv B$ is constant.
For every $k\in\mN$, the stationary reversal satisfies
\begin{equation}\label{eq:constant-reversed-coordinates}
    \ip{\bar X_t}{e_k}_{\mH} =\ip{\bar X_0}{e_k}_{\mH} +\int_0^t\ip{\bar F(\bar X_s)}{e_k}_{\mH}\md s+M_t^k,
\end{equation}
where the processes $M^k$ are continuous square-integrable martingales in the usual augmentation of the reversed natural filtration, starting from zero, with covariations
\begin{equation}\label{eq:constant-coordinate-covariations}
    \left[M^i,M^j\right]_t=q_i\delta_{ij}t.
\end{equation}
For each $T>0$, this stationary reversal admits an $\mH$-valued weak realization
\begin{equation}\label{eq:constant-reversed-diffusion}
    \md\bar X_t=\bar F(\bar X_t)\md t+B(\bar X_t)\md\widetilde W_t, \qquad \bar X_0\sim\mu, \qquad 0\leq t\leq T,
\end{equation}
with a Bochner-integrable drift if and only if
\begin{equation}\label{eq:reversed-drift-path-integrability}
    \int_0^T\norm{d(X_s)}_{\mH}\md s<+\infty,\quad\text{or equivalently,}\quad\int_0^T\norm{\Firr(X_s)}_{\mH}\md s<+\infty\qquad\Prob\text{-a.s.}
\end{equation}
Here $\widetilde W$ is a cylindrical Wiener process on $\mU$, on an extension of the stochastic basis if necessary.

In particular, if \cref{eq:finite-cameron-martin-energy} holds, then \cref{eq:reversed-drift-path-integrability} holds for every $T>0$, which yields \cref{asm:reversed-diffusion}.
\end{theorem}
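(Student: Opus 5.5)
The plan is to handle the three parts of \cref{thm:constant-diffusion-time-reversal} separately: the coordinate equations \cref{eq:constant-reversed-coordinates}, then the $\mH$-valued realization, then the finite-energy consequence. For the coordinates I would use \citet[Theorem~3.6]{CCGL2023}. That criterion identifies the semimartingale decomposition of a time-reversed stationary semimartingale once an integration-by-parts identity holds for a suitable class of test functionals.

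\textbf{Step 1: coordinate equations.} Write $x_k=\ip{x}{e_k}_\mH$. In the forward direction, $x_k(X_t)=x_k(X_0)+\int_0^t\ip{F(X_s)}{e_k}_\mH\md s+q_k^{1/2}\beta^k_t$, where the $\beta^k$ are independent Brownian motions, because $B$ is constant and $a$ is diagonal in $(e_k)$. The input CCGL need is, as far as I can reconstruct it, an identity of the form
\[
\E\Bigl[\int_0^T \ip{a e_k}{D\varphi(X_s)}_\mH\,\Phi\md s\Bigr]=-\E\Bigl[\int_0^T\ip{d(X_s)}{e_k}_\mH\,\varphi(X_s)\,\Phi\md s\Bigr]
\]
for cylinder $\varphi\in\FC$ and bounded test functionals $\Phi$ of the past and future paths. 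I would obtain it from \cref{asm:covariance-divergence}, combined with stationarity and the Markov property (conditioning on $X_s$). The reversed drift of the $k$-th coordinate then comes out as $-\ip{F}{e_k}_\mH+\divg_\mu(ae_k)=\ip{d-F}{e_k}_\mH=\ip{\bar F}{e_k}_\mH$. Each term is in $L^1(\mu)$ by \cref{lem:generator-integrability-characterization} and \cref{asm:covariance-divergence}.

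\textbf{Step 2: covariations and square integrability.} Time reversal preserves pathwise quadratic covariation, so $[M^i,M^j]_t=[x_i,x_j]_t=q_i\delta_{ij}t$. A continuous local martingale with deterministic bracket is square integrable, and by Lévy's characterization $M^k=q_k^{1/2}\tilde\beta^k$ with the $\tilde\beta^k$ independent Brownian motions.

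I expect the main obstacle to be checking the hypotheses of CCGL with only coordinatewise $L^1(\mu)$ integrability of $F$ and $d$, and in particular passing the identity from cylinder $\varphi$ to the path functionals their theorem requires. My first attempt would use the conditioning argument above together with truncation, exactly as in the proof of \cref{lem:two-point-short-time-expansion}, relying on uniform integrability coming from stationarity.

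\textbf{Step 3: $\mH$-valued realization.} Assume \cref{eq:reversed-drift-path-integrability}. Since $\int_0^T\norm{F(X_s)}_\mH\md s<\infty$ a.s. by \cref{eq:weak-solution-integrability}, and the law of $\int_0^T\norm{h(X_s)}_\mH\md s$ is invariant under reversal, the integral $\int_0^T\bar F(\bar X_s)\md s$ is a Bochner integral a.s. Set $M_t\coloneqq\bar X_t-\bar X_0-\int_0^t\bar F(\bar X_s)\md s$. This is a continuous $\mH$-valued process with coordinates $M^k$, and $\sum_k\E|M^k_t|^2=t\Tr a<\infty$. Hence $M$ is an $a$-Wiener process in $\mH$. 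The martingale representation theorem \citep[Ch.~8]{DaPratoZabczyk2014} then gives, on an extension, a cylindrical $\widetilde W$ on $\mU$ with $M=B\widetilde W$, which is \cref{eq:constant-reversed-diffusion}.

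Conversely, a realization with Bochner-integrable drift gives $\int_0^T\norm{\bar F(\bar X_s)}_\mH\md s<\infty$. Reversal invariance together with $d=\bar F+F$ and $\Firr=F-d/2$ then yields both forms of \cref{eq:reversed-drift-path-integrability}, and shows they are equivalent.

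\textbf{Step 4: finite energy.} If \cref{eq:finite-cameron-martin-energy} holds, then $\Firr(x)=a^{1/2}(a^{-1/2}\Firr(x))$, so $\norm{\Firr}_\mH\le\norm{a}^{1/2}g$ $\mu$-a.e. Stationarity then gives $\E\int_0^T\norm{\Firr(X_s)}_\mH\md s\le T\norm{a}^{1/2}(\int g^2\md\mu)^{1/2}<\infty$. So \cref{eq:reversed-drift-path-integrability} holds for every $T$, and Step 3 yields \cref{asm:reversed-diffusion}.
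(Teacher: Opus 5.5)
\textbf{Gap in Step 1.} Step 1 carries the whole theorem, and as written it does not go through.

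The identity you attribute to \citet{CCGL2023} is false. Take $\Phi=\psi(X_0)$, divide by $T$ and let $T\to0$. This forces $\int_\mH\ip{ae_k}{D\varphi}_\mH\psi\,\md\mu=-\int_\mH\ip{d}{e_k}_\mH\varphi\psi\,\md\mu$ for all smooth $\varphi,\psi$. But \cref{asm:covariance-divergence} applied to $\varphi\psi$ gives this identity plus the extra term $\int_\mH\varphi\ip{ae_k}{D\psi}_\mH\md\mu$. That term is not zero in general; it already fails for the one-dimensional Ornstein--Uhlenbeck process.

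The deeper problem is the conditioning idea itself. Checking the backward martingale property via the forward Markov property reduces to the duality $\int\psi L\phi\,\md\mu=\int\phi\bar L\psi\,\md\mu$ with one argument replaced by $P_rh$, for bounded Borel $h$. These functions are not cylinder functions. The duality is only known on $\FC$, and no core property is assumed. So truncation and uniform integrability as in \cref{lem:two-point-short-time-expansion} cannot close this step. Getting around this core problem is exactly what the abstract criterion is for, and your sketch leaves it open.

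\textbf{The missing idea.} The hypothesis of \citet[Theorem~3.6(b)]{CCGL2023} involves no path functionals. It only uses the carr\'e du champ along the current state, tested against spacetime functions. For $f$ in a multiplicatively stable determining class and $w\in C_b([0,T]\times\mR^{\mN})$ with $w_t$ in that class, one needs
\[
\E\int_0^T\Gamma(f,w_t)(Y_t)\md t=\int w\,\md\sigma_f
\]
for a finite signed measure $\sigma_f$.

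Under stationarity this is just the spatial identity $\int\Gamma(u,v)\md\mu=-\int S_uv\,\md\mu$. Here $S_u=\divg_\mu(aDu)=\Tr(aD^2u)+\ip{d}{Du}_\mH\in L^1(\mu)$, so $\sigma_f=-S_u\,\md t\,\nu$.

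The paper sets this up as follows:
\begin{itemize}
\item It works on $\mR^{\mN}$ through the coordinate map, using the normalized class $\mathcal A_0$. This makes $\Gamma_Y(f,w_t)$ uniformly bounded by $\sum_k2^kq_k\norm{\partial_kf}_\infty$.
\item The conclusion of the criterion is $\int(Lu+\overleftarrow{\mathcal L}u)v\,\md\mu=-\int\Gamma(u,v)\md\mu$. Comparing with the spatial identity gives $\overleftarrow{\mathcal L}u=S_u-Lu=\bar Lu$, i.e.\ a backward martingale problem for bounded cylinder $u$.
\item The coordinate equations then follow by truncating $\ip{\cdot}{e_k}_\mH$ with $\chi_R$ and stopping. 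Your sketch also omits this localization, and it is needed because coordinates are not admissible test functions.
\end{itemize}

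\textbf{Remaining steps.} Once Step 1 is in place, your Steps 2--4 are sound.
\begin{itemize}
\item Reading off $[M^i,M^j]$ from reversal invariance of realized covariation is a valid shortcut for the paper's product-formula argument. The limit is deterministic, so convergence in law upgrades to convergence in probability.
\item The Da~Prato--Zabczyk representation theorem can replace Ondrej\'at's.
\item The bound $\norm{\Firr}_\mH\le\norm{a}^{1/2}g$ works as well as the paper's Hilbert--Schmidt bound.
\end{itemize}
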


\begin{proof}
Fix $T>0$.
By \cref{lem:generator-integrability-characterization,asm:cylinder-generator-integrability,asm:covariance-divergence}, we have $\bar F_h\coloneqq\ip{d-F}{h}_\mH\in L^1(\mu)$.
Define
\begin{equation}\label{eq:coordinate-embedding}
    \mJ\colon\mH\to\mR^{\mN},\qquad\mJ(x)=\left(\ip{x}{e_k}_{\mH}\right)_{k\ge0};\qquad Y=\mJ X,\qquad \nu=\mJ_\#\mu.
\end{equation}
With the product topology on $\mR^{\mN}$, the map $\mJ$ is a Borel isomorphism onto the Borel subset $\ell^2\subseteq\mR^\mN$, which implies that $Y$ is a stationary Markov process with continuous paths.

Consider the function class $\mathcal A$ as the algebra of smooth bounded finite-coordinate functions on $\mR^{\mN}$ with all derivatives bounded.
A key observation is that the algebra \(\mathcal A\) determines weak convergence of probability measures on \(\mathbb R^{\mathbb N}\) since convergence against $\mathcal A$ gives convergence of all finite-dimensional marginals, and tightness of the coordinate marginals gives tightness on the product space.

For $f,g\in\mathcal A$, put $u=f\circ\mJ$ and $v=g\circ\mJ$.
By It\^o's formula, we have a forward extended generator satisfying $(\mathcal L_Yf)\circ\mJ=Lu$ and a continuous version of its carr\'e du champ,
\begin{equation}\label{eq:coordinate-carre-du-champ}
    \Gamma_Y(f,g)(y)=\sum_{k\ge0}q_k\partial_kf(y)\partial_kg(y),\qquad\forall y\in\ell^2.
\end{equation}
By the definition of $\mathcal A$, the sum is finite, so $\Gamma_Y(f,g)$ is bounded and continuous on $\mR^{\mN}$.
Moreover, $\mathcal L_Yf\in L^1(\nu)$, and the martingale part in It\^o's formula is square integrable.

The spatial integration-by-parts identity gives
\begin{equation}\label{eq:coordinate-spatial-integration-by-parts}
    \int_{\mR^{\mN}}\Gamma_Y(f,g)\md\nu= \int_{\mH}\ip{aDu}{Dv}_\mH\md\mu=-\int_{\mH}\left(\divg_\mu(aDu)\right)v\md\mu=-\int_{\mH}S_uv\md\mu,
\end{equation}
where
\begin{equation}\label{eq:coordinate-divergence-density}
    S_u:=\divg_\mu(aDu) =\Tr(aD^2u)+\ip{d}{Du}_{\mH}\in L^1(\mu),\qquad\forall u\in\FC.
\end{equation}

To apply the integration-by-parts formula given by \citet{CCGL2023}, we set the configuration space as $\mR^\mN$ and introduce the normalized class
\begin{equation}
    \mathcal A_0\coloneqq\left\{f\in\mathcal A\mid \norm{f}_\infty+ \sum_{k\ge0}2^{-k}\norm{\partial_kf}_\infty\leq1\right\}.
\end{equation}
This class is closed under products and still determines weak convergence, since every element of $\mathcal A$ is a scalar multiple of an element of $\mathcal A_0$.
For fixed $f\in\mathcal A_0$ and $w\in C_b([0,T]\times\mR^{\mN})$ with $w_t\in\mathcal A_0$,
\begin{equation}
    \sup_{t,y}|\Gamma_Y(f,w_t)(y)|\leq\sum_{k\ge0}2^kq_k\norm{\partial_kf}_\infty<\infty,
\end{equation}
where the finiteness follows from the fact that $f$ depends on finitely many coordinates.
Hence the spacetime integrals below are absolutely integrable.
We consider the finite signed measure
\begin{equation}\label{eq:coordinate-spacetime-measure}
    \sigma_f(\md t,\md y)\coloneqq-S_u\left(\mJ^{-1}(y)\right)\md t\,\nu(\md y),
\end{equation}
where the density is understood $\nu$-almost everywhere.
By Fubini's theorem and \cref{eq:coordinate-spatial-integration-by-parts}, we have
\begin{equation}\label{eq:coordinate-spacetime-integration-by-parts}
    \E\int_0^T\Gamma_Y(f,w_t)(Y_t)\md t =-\int_0^T\int_{\mH}S_u(x)w_t(\mJ x)\md\mu(x)\md t =\int_{[0,T]\times\mR^{\mN}}w\md\sigma_f,
\end{equation}
so it follows that $ \|\sigma_f\|_{\mathrm{TV}}=T\|S_u\|_{L^1(\mu)}$.

Applying \citet[Theorem~3.6(b)]{CCGL2023} with the multiplicatively stable determining class $\mathcal A_0$, and then rescaling, yields an integrable backward extended generator $\overleftarrow{\mathcal L}_{Y,t}$ satisfying
\begin{equation}\label{eq:coordinate-backward-duality}
    \int_{\mR^{\mN}} \bigl(\mathcal L_Yf+\overleftarrow{\mathcal L}_{Y,t}f\bigr)g\md\nu =-\int_{\mR^{\mN}}\Gamma_Y(f,g)\md\nu,\qquad\forall f\in\mathcal A
\end{equation}
at almost every time $t$ for every $g\in\mathcal A$.
By choosing a countable measure-determining family of $\mathcal A$, we may assume that the exceptional time set is independent of $g\in\mathcal A$.
Comparing with \cref{eq:coordinate-spatial-integration-by-parts} identifies the two integrable densities and gives
\begin{equation}\label{eq:coordinate-backward-generator}
    \overleftarrow{\mathcal L}_{Y,t}f(\mathcal Jx) =S_u(x)-Lu(x) =\frac12\Tr(aD^2u(x))+\ip{\bar F(x)}{Du(x)}_{\mH} ={\bar L} u(x)
\end{equation}
for $\md t\otimes\mu$-almost every $(t,x)$.
Since $S_u,Lu\in L^1(\mu)$ for every $u\in\FC$, also ${\bar L} u\in L^1(\mu)$.
It follows that, for every $u=f\circ\mJ$ with $f\in\mathcal A$,
\begin{equation}\label{eq:coordinate-backward-martingale}
    \bar M_t^u\coloneqq u(\bar X_t)-u(\bar X_0)-\int_0^t{\bar L} u(\bar X_s)\md s
\end{equation}
is a martingale according to the boundedness of $u$ and integrability of ${\bar L} u$ on each finite horizon.

For each fixed index $k$, applying \cref{eq:coordinate-backward-martingale} to bounded smooth truncations of the coordinates and their products, followed by localization and the semimartingale product formula, gives \cref{eq:constant-reversed-coordinates,eq:constant-coordinate-covariations} with continuous local martingales $M^k$.
More specifically, we define the coordinate function $\xi^k\coloneqq\ip{\cdot}{e_k}_\mH$.
For every $R>0$, choose $\chi_R\in C_b^\infty(\mR)$ such that $\chi_R(r)=r$ for $|r|\le R$.
Then $\xi_R^k\coloneqq\chi_R\circ\xi^k\in\FC$ is of the form $f\circ\mJ$ with $f\in\mathcal A$, so \cref{eq:coordinate-backward-martingale} applies to $\xi_R^k$.
Consider the stopping time
\begin{equation}
    \tau_R^k\coloneqq\inf\left\{t\ge0\mid|\bar X_t^k|\ge R\right\},\qquad \bar X_t^k\coloneqq\xi^k(\bar X_t).
\end{equation}
Thus for time $s\in\bigl[0,\tau_R^k\bigr]$ we have
\begin{equation}
    {\bar L} \xi_R^k(\bar X_s)=\frac12\Tr\left(aD^2\xi_R^k(\bar X_s)\right)+\ip{\bar F(\bar X_s)}{D\xi_R^k(\bar X_s)}_{\mH}=\ip{\bar F(\bar X_s)}{e_k}_{\mH}
    =\xi^k\circ\bar F(\bar X_s).
\end{equation}
Since $\bar M_t^{\xi_R^k}$ is a martingale, when $\tau_R^k>0$,
\begin{equation}
    \bar M_{t\wedge\tau_R^k}^{\xi_R^k}=\bar X_{t\wedge\tau_R^k}^k-\bar X_0^k-\int_{0}^{t\wedge\tau_R^k}\xi^k\circ\bar F(\bar X_s)\md s
\end{equation}
is also a martingale.
As $R\to+\infty$, we have $\tau_R^k\to+\infty$, and it follows that
\begin{equation}\label{eq:reversed-coordinate-local-martingale}
    M_t^k\coloneqq\bar X_t^k-\bar X_0^k-\int_{0}^{t}\bar F_s^k\md s
\end{equation}
is a continuous local martingale, where we abbreviate $\bar F_s^k\coloneqq\xi^k\circ\bar F(\bar X_s)$.

Analogously, for any pair of indices $(i,j)$, inserting $u=\xi_R^i\xi_R^j$ into \cref{eq:coordinate-backward-martingale} produces another martingale
\begin{equation}\label{eq:stopped-coordinate-product-martingale}
    A_t^{i,j}\coloneqq\bar M_{t\wedge\tau_R^{ij}}^{\xi_R^i\xi_R^j}
    =\bar X_{t\wedge\tau_R^{ij}}^i\bar X_{t\wedge\tau_R^{ij}}^j-\bar X_0^i\bar X_0^j-\int_{0}^{t\wedge\tau_R^{ij}}\left[\bar X_s^j\bar F_s^i+\bar X_s^i\bar F_s^j+q_i\delta_{ij}\right]\md s,
\end{equation}
where the stopping time $\tau_R^{ij}$ is defined as the infimum time whenever $|\bar X_t^i|^2+|\bar X_t^j|^2\ge R^2$.
Meanwhile, we have
\begin{equation}
    \md\bar X_t^k=\bar F_t^k\md t+\md M_t^k
\end{equation}
according to \cref{eq:reversed-coordinate-local-martingale}, so by product formula
\begin{equation}
    \bar X_t^i\bar X_t^j-\bar X_0^i\bar X_0^j=\int_0^t\left(\bar X_s^i\bar F_s^j+\bar X_s^j\bar F_s^i\right)\md s+\int_0^t\bar X_s^i\md M_s^j+\int_0^t\bar X_s^j\md M_s^i+\left[M^i,M^j\right]_t,
\end{equation}
and thus we have a local martingale
\begin{equation}\label{eq:coordinate-product-formula-martingale}
    B_t^{i,j}\coloneqq\bar X_{t\wedge\tau_R^{ij}}^i\bar X_{t\wedge\tau_R^{ij}}^j-\bar X_0^i\bar X_0^j-\int_0^{t\wedge\tau_R^{ij}}\left(\bar X_s^i\bar F_s^j+\bar X_s^j\bar F_s^i\right)\md s-\left[M^i,M^j\right]_{t\wedge\tau_R^{ij}}.
\end{equation}
Taking the difference between \cref{eq:stopped-coordinate-product-martingale} and \cref{eq:coordinate-product-formula-martingale} gives rise to a local martingale
\begin{equation}
    \left[M^i,M^j\right]_{t\wedge\tau_R^{ij}}-\delta_{ij}q_i\left(t\wedge\tau_R^{ij}\right),
\end{equation}
which is clearly a finite-variation process and thus constant.
For any fixed time $t$, we may take $R\to+\infty$, and then \cref{eq:constant-coordinate-covariations} follows.
Furthermore, by setting $i=j=k$, we have $[M^k]_t=q_kt$, which implies that $M^k$ is square integrable.

To reconstruct the noise, set $N_t^{(n)}:=\sum_{k=0}^nM_t^ke_k$.
Doob's inequality gives, for $n>m$,
\begin{equation}\label{eq:coordinate-noise-series-convergence}
    \E\sup_{0\leq t\leq T} \norm{N_t^{(n)}-N_t^{(m)}}_{\mH}^2 \leq4T\sum_{k=m+1}^nq_k\rightarrow0\quad\text{as}\quad m,n\to\infty.
\end{equation}
The partial sums therefore converge in $L^2(\Omega;C([0,T];\mH))$ to a continuous $\mH$-valued martingale $N$ with coordinates $M^k$ and operator quadratic variation $ta$.
Since
\begin{equation}
    BB^*=a, \qquad \int_0^T\norm{B}_{\mathcal L_2(\mU,\mH)}^2\md s =T\Tr a<+\infty,
\end{equation}
the Brownian representation theorem of \citet[Theorem~2 and Corollary~6]{Ondrejat2005Brownian} yields, after adjoining an independent sequence of auxiliary Brownian motions if necessary, a cylindrical Wiener process $\widetilde W$ on $\mU$ such that
\begin{equation}\label{eq:constant-noise-brownian-representation}
    N_t=\int_0^tB\md\widetilde W_s.
\end{equation}

It remains to characterize the drift integrability.
For any nonnegative Borel function of the state, its time integral over $[0,T]$ is invariant under path reversal.
Hence the forward drift integrability in \cref{asm:stationary-diffusion}, together with $\bar F=d-F$ and $2\Firr=2F-d$, shows that \cref{eq:reversed-drift-path-integrability} is equivalent to
\begin{equation}
    \int_0^T\norm{\bar F(\bar X_s)}_{\mH}\md s<+\infty
\end{equation}
almost surely.
Under this condition, the coordinate identities and completeness of $(e_k)$ yield the $\mH$-valued identity
\begin{equation}\label{eq:constant-reversed-integral-equation}
    \bar X_t=\bar X_0+\int_0^t\bar F(\bar X_s)\md s+N_t.
\end{equation}
Together with \cref{eq:constant-noise-brownian-representation}, this proves \cref{eq:constant-reversed-diffusion}.
Conversely, a weak realization of the stationary reversal with the prescribed Bochner-integrable drift requires the last integrability condition, and therefore \cref{eq:reversed-drift-path-integrability}.
The construction is consistent on finite horizons, so the condition for every $T>0$ implies \cref{asm:reversed-diffusion}.

Finally, under the finite-energy condition \cref{eq:finite-cameron-martin-energy}, we have
\begin{equation}\label{eq:energy-drift-integrability}
    \int_\mH\norm{\Firr}_\mH\md\mu \le \left( \int_\mH\norm{B}_{\HS}^2\md\mu \right)^{1/2} \left(\int_\mH g^2\md\mu\right)^{1/2}.
\end{equation}
Stationarity therefore yields \cref{eq:reversed-drift-path-integrability} on every finite time interval.
\end{proof}

\subsection[State-dependent diffusion coefficient]{State-dependent diffusion coefficient}\label{sec:state-dependent-time-reversal}
In this subsection, we identify the stationary reversal for continuous state-dependent diffusion coefficients in the finite-energy regime.
We extend the invariant-measure integration-by-parts identity from cylinder functions to smooth bounded functions on $\mH$.
This allows us to apply the abstract time-reversal criterion directly on $\mH$ and recover the reversed stochastic equation under the norm-integrability assumptions below.

\begin{theorem}\label{thm:continuous-diffusion-time-reversal}
Assume that \cref{asm:stationary-diffusion,asm:cylinder-generator-integrability,asm:covariance-divergence} hold, $B\in L^2(\mu;\mathcal L_2(\mU,\mH))$, and one of $F$ and $\bar F$ belongs to $ L^1(\mu;\mH)$.
Suppose that $B\colon\mH\to\mathcal L_2(\mU,\mH)$ is continuous and that \cref{eq:finite-cameron-martin-energy} holds.
Then the stationary time reversal is an $\mH$-valued weak solution of
\begin{equation}\label{eq:state-dependent-reversed-diffusion}
    \md\bar X_t =\bar F(\bar X_t)\md t +B(\bar X_t)\md\widetilde W_t, \qquad \bar F=d-F=F-2\Firr, \qquad \bar X_0\sim\mu,
\end{equation}
with a Bochner-integrable drift on every finite time interval, on an extension of the stochastic basis if necessary.
In particular, \cref{asm:reversed-diffusion} holds.
\end{theorem}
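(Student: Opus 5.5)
Following the strategy of \cref{thm:constant-diffusion-time-reversal}, but working directly on $\mH$ rather than on $\mR^{\mN}$, we apply \citet[Theorem~3.6]{CCGL2023} to the stationary process $X$ on the Polish space $\mH$. The test algebra is a multiplicatively stable class of smooth bounded functions that determines weak convergence. This class must be larger than $\FC$, because the carr\'e du champ $\Gamma(\phi,\psi)=\ip{aD\phi}{D\psi}_\mH$ has to be continuous and bounded on $\mH$, and the reversed martingale problem must eventually be tested against functions of $\norm{x}_\mH$.

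\textbf{Step 1: test class.} We take $\mathcal C$ to be the normalized class of $C_b^2$ functions $\varphi\colon\mH\to\R$ with bounded, uniformly continuous $D\varphi$ and $D^2\varphi$. Continuity of $B$ into $\HS$ makes $\Gamma(\phi,\psi)$ continuous, and $\Tr(aD^2\varphi)$ is well defined. Boundedness of $\Gamma$ is not automatic, since $a$ need not be bounded. We therefore use the integrable form of the criterion with the dominating density $\norm{B}_{\HS}^2\in L^1(\mu)$, or restrict to $\varphi$ whose derivatives are supported on balls and then localize.

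\textbf{Step 2: extend integration by parts (main obstacle).} We must show
\[
\int_\mH\ip{aD\varphi}{D\psi}_\mH\md\mu=-\int_\mH\bigl(\ip{d}{D\varphi}_\mH+\Tr(aD^2\varphi)\bigr)\psi\md\mu
\]
for $\varphi,\psi\in\mathcal C$. The integrand requires $\ip{d}{D\varphi}\in L^1(\mu)$, so we first need $d\in L^1(\mu;\mH)$. This follows from the finite energy condition \cref{eq:finite-cameron-martin-energy}: the estimate \cref{eq:energy-drift-integrability} gives $\Firr\in L^1(\mu;\mH)$. Combined with the hypothesis that $F$ or $\bar F$ lies in $L^1$, and the identities $d=2(F-\Firr)$ and $\bar F=F-2\Firr$, this yields $F,\bar F,d\in L^1(\mu;\mH)$.

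Next we approximate a general $\varphi\in\mathcal C$ by cylinder functions $\varphi_n(x)=\varphi(P_nx)$, where $P_n$ are the orthogonal projections onto spans of an orthonormal basis, mollified in finitely many coordinates if necessary to land in $\FC$. Then $D\varphi_n=P_nD\varphi(P_n\cdot)\to D\varphi$ pointwise with uniform bounds. Likewise $\Tr(aD^2\varphi_n)\to\Tr(aD^2\varphi)$, dominated by $\norm{D^2\varphi}_\infty\norm{B}_{\HS}^2$. Dominated convergence, using $d\in L^1$ and $\norm{B}_{\HS}^2\in L^1$, passes the identity of \cref{eq:covariance-gradient-divergence} to the limit. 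The delicate points are the uniform domination of the trace term and the convergence of the Hessian in the trace pairing. These need continuity of $D^2\varphi$ together with the trace-class structure, $\Tr(aP_nD^2\varphi P_n)\to\Tr(aD^2\varphi)$, which follows since $P_nBB^*P_n\to a$ in trace norm.

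\textbf{Step 3: reversed generator.} The forward It\^o formula on $\mathcal C$ gives the extended generator $L\varphi=\ip{F}{D\varphi}+\tfrac12\Tr(aD^2\varphi)\in L^1(\mu)$, with square-integrable martingale part because $B\in L^2(\mu)$. The spacetime measure $\sigma_\varphi=-S_\varphi\,\md t\,\mu$ with $S_\varphi=\divg_\mu(aD\varphi)\in L^1$ satisfies the hypothesis of \citet[Theorem~3.6(b)]{CCGL2023} by Step 2. That theorem yields a backward generator, which we identify as ${\bar L}\varphi=S_\varphi-L\varphi=\ip{\bar F}{D\varphi}+\tfrac12\Tr(aD^2\varphi)$ exactly as in \cref{eq:coordinate-backward-generator}. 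Hence $\varphi(\bar X_t)-\varphi(\bar X_0)-\int_0^t{\bar L}\varphi(\bar X_s)\md s$ is a martingale for every $\varphi\in\mathcal C$.

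\textbf{Step 4: recover the SDE.} Applying the martingale problem to truncated linear functionals $\chi_R(\ip{\cdot}{h})$ and their products, and localizing as in \cref{thm:constant-diffusion-time-reversal}, gives
\[
N_t\coloneqq\bar X_t-\bar X_0-\int_0^t\bar F(\bar X_s)\md s .
\]
The drift integral is Bochner integrable, since stationarity and $\bar F\in L^1(\mu;\mH)$ give $\E\int_0^T\norm{\bar F(\bar X_s)}\md s<\infty$. The process $N$ is a continuous $\mH$-valued local martingale with $\ip{N}{h}$ having quadratic covariations $\int_0^t\ip{a(\bar X_s)h}{k}\md s$, and $\E\int_0^T\norm{B(\bar X_s)}_{\HS}^2\md s<\infty$. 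Ondrej\'at's Brownian representation theorem then produces a cylindrical Wiener process $\widetilde W$ with $N_t=\int_0^tB(\bar X_s)\md\widetilde W_s$. This gives \cref{eq:state-dependent-reversed-diffusion} and hence \cref{asm:reversed-diffusion}.
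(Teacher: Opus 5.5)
Your proposal is correct and follows the paper's proof essentially step for step. Finite Cameron--Martin energy gives $F,\bar F,d\in L^1(\mu;\mH)$; the integration-by-parts identity is extended from $\FC$ to smooth bounded functions on $\mH$ via $\varphi\circ\Pi_n$ and dominated convergence; \citet[Theorem~3.6(b)]{CCGL2023} identifies the backward generator as $\bar L$; and coordinate truncation plus Ondrej\'at's Brownian representation recovers the $\mH$-valued equation. Your deviations are cosmetic: you use a $C_b^2$ test class that needs a finite-dimensional mollification instead of $C_b^\infty(\mH)$, and you define $N$ directly. Note, however, that the actual reason for leaving $\FC$ is that cylinder functions do not determine weak convergence on $(\mH,\norm{\cdot}_\mH)$, not boundedness of $\Gamma$; boundedness fails for either class and is handled by the integrable bound $\norm{D\varphi}_\infty\norm{B(x)}_{\HS}^2$.
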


\begin{proof}
Following the idea of \cref{thm:constant-diffusion-time-reversal}, we apply \citet[Theorem~3.6(b)]{CCGL2023} with configuration space $\mH$ and an appropriate determining class $\mathcal C_0$ as defined below.

When the regular regime \cref{eq:finite-cameron-martin-energy} holds, we have \cref{eq:energy-drift-integrability}.
It follows that both $F$ and $\bar F$ belong to $L^1(\mu;\mH)$, and thus we have $d=2F-2\Firr\in L^1(\mu;\mH)$ as well.

Fix $T>0$.
As in the proof of \cref{thm:constant-diffusion-time-reversal}, let $\bar X$ denote the canonical process with the consistent stationary reversed path laws, equipped with the usual augmentation of its natural filtration.
We extend the differential operators $L(\cdot)$, $S_\cdot$, and $\Gamma(\cdot,\cdot)$ to the function class $\mathcal C\coloneqq C_b^\infty(\mH)$ consisting of all bounded smooth functions with bounded Fr\'echet derivatives of every order, which determines weak convergence of probability measures on $\mH$.
The integrability of $d$ and $F$ indicates
\begin{equation}
    L\varphi,\; S_\varphi,\; \Gamma(\varphi,\psi)\in L^1(\mu),\qquad\forall\varphi,\psi\in\mathcal C.
\end{equation}
By It\^o's formula, $L$ is a forward extended generator on $\mathcal C$ with carr\'e du champ $\Gamma$, whose stochastic-integral term is a square-integrable martingale since $B\in L^2(\mu;\HS(\mU,\mH))$.

We first extend the spatial integration-by-parts identity to $\mathcal C$.
Choose any complete orthonormal basis $(e_k)_{k\ge0}$ of $\mH$, and let $\Pi_n$ be the orthogonal projection onto the spanned subspace of $\{e_0,\ldots,e_{n-1}\}$.
By setting $\varphi_n=\varphi\circ\Pi_n$ and $\psi_n=\psi\circ\Pi_n$, we have $\varphi_n,\psi_n\in\FC$ for $\varphi,\psi\in\mathcal C$.
As in the proof of \cref{thm:constant-diffusion-time-reversal}, the integration-by-parts identity \cref{eq:coordinate-spatial-integration-by-parts} following from \cref{asm:covariance-divergence} becomes
\begin{equation}
    \int_\mH\Gamma(\varphi_n,\psi_n)\md\mu=-\int_\mH S_{\varphi_n}\psi_n\md\mu,
\end{equation}
where
\begin{equation}
    \Gamma(\varphi_n,\psi_n)=\ip{aD\varphi_n}{D\psi_n}_\mH,\qquad S_{\varphi_n}=\Tr(aD^2\varphi_n)+\ip{d}{D\varphi_n}_\mH.
\end{equation}
Since
\begin{equation}
    \begin{aligned}
        |\Gamma(\varphi_n,\psi_n)| &\le \norm{D\varphi}_\infty \norm{D\psi}_\infty\Tr a,\\
        \left|S_{\varphi_n}\right| &\le \norm{D\varphi}_\infty\norm{d}_\mH +\norm{D^2\varphi}_\infty\Tr a,
    \end{aligned}
\end{equation}
dominated convergence yields
\begin{equation}\label{eq:smooth-function-integration-by-parts}
    \int_\mH\Gamma(\varphi,\psi)\md\mu = -\int_\mH S_\varphi\psi\md\mu, \qquad \forall\varphi,\psi\in\mathcal C.
\end{equation}

As in the proof of \cref{thm:constant-diffusion-time-reversal}, we introduce the normalized class
\begin{equation}
    \mathcal C_0 \coloneqq \left\{ \varphi\in\mathcal C: \norm{\varphi}_\infty+\norm{D\varphi}_\infty\le1 \right\}.
\end{equation}
This class is closed under products and still determines weak convergence.
By the continuity of $B$, for fixed $\varphi\in\mathcal C_0$ and $w\in C_b([0,T]\times\mH)$ with $w_t\in\mathcal C_0$, $\Gamma(\varphi,w_t)$ is continuous with respect to $x\in\mH$, and we have the upper bound
\begin{equation}
    |\Gamma(\varphi,w_t)(x)| \le \norm{D\varphi}_\infty \norm{B(x)}_{\HS}^2.
\end{equation}
Hence the spacetime integral is absolutely integrable.
Define $ \tilde\sigma_\varphi(\md t,\md x) \coloneqq-S_\varphi(x)\md t\mu(\md x)$, then
\begin{equation}
    \E\int_0^T\Gamma(\varphi,w_t)(X_t)\md t=-\int_0^T\int_\mH S_\varphi(x)w_t(x)\md\mu(x)\md t=\int_{[0,T]\times\mH}w\md\tilde\sigma_\varphi,
\end{equation}
and we can show that $\|\tilde\sigma_\varphi\|_{\rm TV}=T\|S_\varphi\|_{L^1(\mu)}$ with the same argument as in \cref{eq:coordinate-spacetime-integration-by-parts}.

All hypotheses of \citet[Theorem~3.6(b)]{CCGL2023} are therefore satisfied with configuration space $\mH$ and determining class $\mathcal C_0$.
After rescaling, its integration-by-parts formula gives
\begin{equation}
    \int_\mH \bigl(L\varphi+\overleftarrow{\mathcal L}_t\varphi\bigr) \psi\md\mu = -\int_\mH\Gamma(\varphi,\psi)\md\mu
\end{equation}
for every $\varphi,\psi\in\mathcal C$ at almost every time.
Comparing with \cref{eq:smooth-function-integration-by-parts} and using the countable measure-determining argument from the proof of \cref{thm:constant-diffusion-time-reversal}, we obtain, for each $\varphi\in\mathcal C$,
\begin{equation}
    \overleftarrow{\mathcal L}_t\varphi = S_\varphi-L\varphi = \ip{\bar F}{D\varphi}_\mH +\frac12\Tr(aD^2\varphi) =\bar L\varphi \qquad\md t\otimes\mu\text{-a.e.}
\end{equation}
Thus
\begin{equation}\label{eq:state-dependent-backward-martingale}
    \bar M_t^\varphi\coloneqq\varphi(\bar X_t)-\varphi(\bar X_0) -\int_0^t\bar L\varphi(\bar X_s)\md s
\end{equation}
is a martingale in the reversed natural filtration by the boundedness of $\varphi$ and integrability of $\bar L\varphi$.

Apply the coordinate truncation and product-formula argument from the proof of \cref{thm:constant-diffusion-time-reversal} to \cref{eq:state-dependent-backward-martingale}.
This yields continuous local martingales
\begin{equation}
    M_t^k\coloneqq \ip{\bar X_t-\bar X_0}{e_k}_\mH -\int_0^t\ip{\bar F(\bar X_s)}{e_k}_\mH\md s, \qquad k\in\mN,
\end{equation}
with covariations
\begin{equation}\label{eq:state-dependent-coordinate-covariations}
    \left[M^i,M^j\right]_t = \int_0^t \ip{a(\bar X_s)e_i}{e_j}_\mH\md s.
\end{equation}
In particular,
\begin{equation}
    \E\left[M^k\right]_T = T\int_\mH\norm{B(x)^*e_k}_\mU^2\md\mu(x) <+\infty,
\end{equation}
so each $M^k$ is also square integrable.

To reconstruct the noise, we still set $N_t^{(n)}\coloneqq\sum_{k=0}^nM_t^ke_k$, and it can be shown as in \cref{thm:constant-diffusion-time-reversal} that the partial sums converge to a continuous $\mH$-valued martingale
\begin{equation}
    N_t=\int_0^tB(\bar X_s)\md\widetilde W_s
\end{equation}
with coordinates $M^k$.
The coordinate identities, Bochner integrability of the drift, and completeness of $(e_k)$ now give
\begin{equation}
    \bar X_t = \bar X_0 +\int_0^t\bar F(\bar X_s)\md s +\int_0^tB(\bar X_s)\md\widetilde W_s.
\end{equation}
All martingale identities and integrability bounds hold on every finite horizon of the canonical stationary reversal, so the Brownian representation can be taken on $[0,+\infty)$.
This proves \cref{asm:reversed-diffusion}.
\end{proof}

\section{Consequences and examples}\label{sec:consequences-and-examples}
In this section, we present two examples illustrating how to verify the hypotheses of the preceding results and compute the corresponding entropy production rates.

Combining \cref{thm:entropy-production-dichotomy} with the time-reversal criteria in \cref{thm:constant-diffusion-time-reversal,thm:continuous-diffusion-time-reversal}, we obtain the following summary.
\begin{theorem}\label{thm:verifiable-entropy-production-criteria}
Assume \cref{asm:stationary-diffusion,asm:cylinder-generator-integrability,asm:covariance-divergence}, and let $g$ be defined by \cref{eq:finite-cameron-martin-energy}.
\begin{itemize}
    \item If \cref{eq:finite-cameron-martin-energy} fails, then $\epr=+\infty$;
    \item If \cref{eq:finite-cameron-martin-energy} holds, one of $F$ and $\bar F$ is Lipschitz continuous on bounded sets, and either
    \begin{itemize}
        \item $B$ is constant;
        \item $B\in L^2(\mu;\HS(\mU,\mH))$ is Lipschitz continuous on bounded sets and one of $F$ and $\bar F$ belongs to $L^1(\mu;\mH)$,
    \end{itemize}
    then \cref{asm:reversed-diffusion} holds and for every $T>0$,
    \begin{equation}
        \mP_{[0,T]}\sim\mP^{\mathrm{rev}}_{[0,T]}, \qquad \epr=2\int_\mH g^2\,\md\mu<+\infty.
    \end{equation}
\end{itemize}
\end{theorem}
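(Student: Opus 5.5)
The first bullet is immediate from \cref{thm:entropy-production-lower-bound}, or equivalently from the first bullet of \cref{thm:entropy-production-dichotomy}. It uses only \cref{asm:stationary-diffusion,asm:cylinder-generator-integrability,asm:covariance-divergence}, which are assumed here. So the work lies in the second bullet. The plan is to verify \cref{asm:reversed-diffusion} through one of the two time-reversal criteria, and then to feed it, together with the Lipschitz hypotheses, into the second bullet of \cref{thm:entropy-production-dichotomy}.

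\emph{Case $B$ constant.}
Under \cref{eq:finite-cameron-martin-energy}, the final assertion of \cref{thm:constant-diffusion-time-reversal} gives the path integrability \cref{eq:reversed-drift-path-integrability} for every $T>0$, hence \cref{asm:reversed-diffusion}. A constant $B$ is trivially Lipschitz on bounded sets. By hypothesis one of $F,\bar F$ is Lipschitz on bounded sets. Thus all hypotheses of \cref{thm:entropy-production-dichotomy} hold, and it yields $\Prob_{[0,T]}\sim\Prob^{\mathrm{rev}}_{[0,T]}$ and $\epr=2\int_\mH g^2\md\mu$. I would remark that in this case no $L^1(\mu;\mH)$ condition on $F$ is needed. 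The reason is that the constant-coefficient criterion only requires the pathwise condition \cref{eq:reversed-drift-path-integrability}, and this follows from \cref{eq:energy-drift-integrability} since $\int\norm{B}_{\HS}^2\md\mu=\Tr a<\infty$.

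\emph{Case $B$ state-dependent.}
Here $B$ is Lipschitz on bounded sets, hence continuous as a map $\mH\to\HS(\mU,\mH)$. We are also given $B\in L^2(\mu;\HS(\mU,\mH))$, that one of $F,\bar F$ lies in $L^1(\mu;\mH)$, and \cref{eq:finite-cameron-martin-energy}. These are exactly the hypotheses of \cref{thm:continuous-diffusion-time-reversal}, which produces the reversed equation \cref{eq:state-dependent-reversed-diffusion} and hence \cref{asm:reversed-diffusion}. The Lipschitz property of $B$ and of one drift again puts us in the second bullet of \cref{thm:entropy-production-dichotomy}, which concludes.

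\emph{Checks.}
The only point requiring attention is that \cref{asm:reversed-diffusion} is stated for all $t\ge0$, while the criteria are proved on finite horizons. For the constant case, \cref{thm:constant-diffusion-time-reversal} notes that consistency over horizons gives the global statement. For the state-dependent case, the final paragraph of the proof of \cref{thm:continuous-diffusion-time-reversal} already takes the representation on $[0,+\infty)$. I would also check that the uniqueness-in-law step inside \cref{prop:reversed-lipschitz-martingale,prop:forward-lipschitz-entropy-identity} uses only local Lipschitz bounds together with the existence of a global weak solution. Those existence results are supplied by \cref{asm:stationary-diffusion} and the reversal just established, so no further hypotheses are needed.
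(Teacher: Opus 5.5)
Your proposal is correct and matches the paper's own argument. The paper obtains this theorem simply by combining \cref{thm:entropy-production-dichotomy} (whose first bullet is \cref{thm:entropy-production-lower-bound}) with \cref{thm:constant-diffusion-time-reversal} for constant $B$, or \cref{thm:continuous-diffusion-time-reversal} for Lipschitz and hence continuous $B$, to supply \cref{asm:reversed-diffusion}. Your side remarks are also accurate: in the constant case no $L^1(\mu;\mH)$ hypothesis is needed, because \cref{eq:energy-drift-integrability} with $\int\norm{B}_{\HS}^2\md\mu=\Tr a$ already yields the pathwise drift integrability, and the finite-horizon issue is handled inside the two criteria.
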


\begin{example}\label{ex:nonlinear-rotator}
Take
\begin{equation}
    \mU=\mH=\ell^2(\mN;\mR^2)\coloneqq\left\{(x_k)_{k=0}^\infty\mid\sum_{k=0}^\infty\norm{x_k}^2<+\infty,\quad x_k\in\mR^2\quad\forall k\in\mN\right\}
\end{equation}
and introduce
\begin{equation}
    \begin{aligned}
        C&\colon\mH\to\mH,\qquad Cx\coloneqq(c_kx_k)_{k=0}^\infty\quad\text{for}\quad (c_k)_{k=0}^\infty\in\ell^1\quad\text{and}\quad c_k>0\quad\forall k\in\mN;\\
        R&\colon\mH\to\mH,\qquad Rx\coloneqq(r_kJx_k)_{k=0}^\infty\quad\text{for}\quad(r_k)_{k=0}^\infty\in\ell^\infty,\quad J=
        \begin{pmatrix}
            0&-1\\
            1&0
        \end{pmatrix}.
    \end{aligned}
\end{equation}
Consider
\begin{equation}\label{eq:rotator-forward-diffusion}
    \md X_t=F(X_t)\md t+B(X_t)\md W_t,\quad X_0\sim\mu=\mathcal N(0,C),
\end{equation}
where
\begin{equation}
    F(x)=-x+\xi\left(\norm{x}_\mH^2\right)C^{1/2}Rx\quad\text{and}\quad B(x)=(2C)^{1/2}
\end{equation}
for some function $\xi\in C^1([0,+\infty))$ such that
\begin{equation}
    \sup_{s\ge0}|\xi(s)|<+\infty\quad\text{and}\quad\sup_{s\ge0}s|\xi'(s)|<+\infty.
\end{equation}
\end{example}
\begin{proposition}\label{prop:rotator-gaussian-invariance}
The measure $\mu=\mathcal N(0,C)$ is invariant for the transition semigroup of \cref{eq:rotator-forward-diffusion}.
In particular, its solution with initial law $\mu$ is a stationary, time-homogeneous Markov process with continuous $\mH$-valued paths.
\end{proposition}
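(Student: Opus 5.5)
The plan is to prove well-posedness directly, and to obtain invariance from a Galerkin approximation whose finite-dimensional invariant measures are explicitly Gaussian. This avoids needing a core for the generator.

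\textbf{Step 1: global Lipschitz coefficients.} Write $G(x)\coloneqq\xi(\norm{x}_\mH^2)C^{1/2}Rx$. Its Fr\'echet derivative is
\begin{equation*}
DG(x)h=2\xi'(\norm{x}_\mH^2)\ip{x}{h}_\mH C^{1/2}Rx+\xi(\norm{x}_\mH^2)C^{1/2}Rh .
\end{equation*}
Since $C^{1/2}R$ is bounded (norm at most $\sup_k c_k^{1/2}\sup_k|r_k|$), we get
\begin{equation*}
\norm{DG(x)}\le \bigl(2\sup_s s|\xi'(s)|+\sup_s|\xi(s)|\bigr)\norm{C^{1/2}R}.
\end{equation*}
Hence $F$ is globally Lipschitz. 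The coefficient $B=(2C)^{1/2}$ is a constant Hilbert--Schmidt operator because $\Tr C=2\sum_k c_k<\infty$. Standard results \citep{DaPratoZabczyk2014} then give a unique strong solution for every $\mathcal F_0$-measurable initial datum. They also give the Markov and Feller property of $(P_t)$ and continuous paths. It remains to show $\mu P_t=\mu$; stationarity of the solution started from $\mu$ then follows.

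\textbf{Step 2: finite-dimensional invariance.} Let $\Pi_N$ project onto the first $N$ blocks, and set $C_N=\Pi_NC\Pi_N$ and $\mu_N=\mathcal N(0,C_N)$ on $\Pi_N\mH\cong\R^{2N}$. Consider
\begin{equation*}
\md X^N_t=\Pi_NF(X^N_t)\md t+\Pi_N(2C)^{1/2}\md W_t,\qquad X^N_0=\Pi_NX_0 .
\end{equation*}
On $\Pi_N\mH$ this has drift $-x+\xi(\norm{x}^2)C_N^{1/2}R_Nx$. Let $\rho_N\propto\exp(-\frac12\ip{C_N^{-1}x}{x})$. The Ornstein--Uhlenbeck part $-x\cdot\nabla+\Tr(C_N\nabla^2)$ is symmetric with respect to $\rho_N$. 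For the rotational part it suffices to check $\nabla\cdot(\rho_NG_N)=0$. This splits into three blockwise computations:
\begin{itemize}
\item $\xi\,\Tr(C_N^{1/2}R_N)=0$, since $\Tr J=0$;
\item $2\xi'\ip{x}{C_N^{1/2}R_Nx}=0$, since $\ip{x_k}{Jx_k}=0$;
\item $\ip{\nabla\log\rho_N}{G_N}=-\xi\sum_k c_k^{-1/2}r_k\ip{x_k}{Jx_k}=0$.
\end{itemize}
The coefficients are smooth and globally Lipschitz, so the stationary Fokker--Planck equation gives invariance of $\mu_N$. Since $\Law(\Pi_NX_0)=\mu_N$ when $X_0\sim\mu$, we get $\Law(X^N_t)=\mu_N$ for all $t$.

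\textbf{Step 3: passage to the limit (main obstacle).} The goal is $\E\sup_{t\le T}\norm{X_t-X^N_t}_\mH^2\to0$. Let $L_F$ denote the Lipschitz constant of $F$. Subtracting the two equations and applying Gronwall yields a bound by a constant times
\begin{equation*}
\E\norm{(I-\Pi_N)X_0}^2+\E\int_0^T\norm{(I-\Pi_N)F(X_s)}^2\md s+\E\sup_{t\le T}\norm{(I-\Pi_N)(2C)^{1/2}W_t}^2 .
\end{equation*}
Here $\norm{\Pi_NF(y)-F(x)}\le L_F\norm{y-x}+\norm{(I-\Pi_N)F(x)}$. The first term equals $\sum_{k\ge N}2c_k\to0$. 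The third is at most $4T\sum_{k\ge N}4c_k\to0$ by Doob. The second tends to $0$ by dominated convergence, using $\norm{F(x)}\le K(1+\norm{x})$ and $\sup_t\E\norm{X_t}^2<\infty$ (from linear growth and Gronwall). Consequently $X^N_t\to X_t$ in $L^2$. Also $\mu_N\to\mu$ weakly, since $\mu_N$ is the law of $\Pi_NY$ for $Y\sim\mu$ and $\Pi_NY\to Y$. So $\Law(X_t)=\mu$ for every $t$.

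Since $X_0\sim\mu$ was arbitrary among $\mathcal F_0$-measurable data with law $\mu$, this gives $\mu P_t=\mu$. Combined with the Markov property from Step~1, the solution started from $\mu$ is a stationary, time-homogeneous Markov process with continuous $\mH$-valued paths.
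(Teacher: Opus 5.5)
Your proposal is correct and follows essentially the same route as the paper. Both arguments use the global Lipschitz bound on $F$ via $DF$, Gaussian invariance of the block truncations because the rotational part is divergence-free with respect to $\mu_N$, a synchronous coupling with the projected initial datum, and weak convergence $\mu_N\to\mu$. The only cosmetic difference is that you carry the extra term $(I-\Pi_N)F(X_s)$ and close it by dominated convergence, whereas the paper uses that $F$ maps $\mH_n$ into itself, so the error is controlled directly by $\Tr(C-C_n)$.
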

\begin{proof}
For any $x,h\in\mH$,
\begin{equation}
    DF(x)h=-h+2\xi'\left(\norm{x}_\mH^2\right) \ip{x}{h}_\mH C^{1/2}Rx+\xi\left(\norm{x}_\mH^2\right)C^{1/2}Rh,
\end{equation}
so we obtain the upper bound
\begin{equation}\label{eq:rotator-drift-lipschitz-bound}
    \sup_{x\in\mH}\norm{DF(x)}\le1+\left(2\sup_{s\ge0}s|\xi'(s)|+\sup_{s\ge0}|\xi(s)|\right)\norm{C^{1/2}R}<+\infty.
\end{equation}
Thus $F$ is globally Lipschitz.
Since $F(0)=0$, it also implies
\begin{equation}\label{eq:rotator-drift-linear-growth}
    \norm{F(x)}_\mH\lesssim \norm{x}_\mH, \qquad x\in\mH,
\end{equation}
where we use the symbol ``$\lesssim$'' to indicate inequality up to a constant multiplier.
Moreover,
\begin{equation}\label{eq:rotator-noise-trace}
    \norm{B}_{\HS}^2=2\Tr C=4\sum_{k=0}^\infty c_k<+\infty.
\end{equation}
Therefore, \cref{eq:rotator-forward-diffusion} admits a unique global strong solution with continuous paths and defines a time-homogeneous Markov process.

For any integer $n\ge1$, let $\mH_n$ be the subspace of $\mH$ consisting of first $n$ two-dimensional blocks and let $C_n$, $R_n$ be the restrictions of $C$ and $R$ on $\mH_n$, respectively.
Then we obtain the corresponding truncated equation
\begin{equation}\label{eq:rotator-truncated-diffusion}
    \md X_t^{(n)}=F\left(X_t^{(n)}\right)\md t+(2C_n)^{1/2}\md W_t,\quad X_0^{(n)}\sim\mu_n\coloneqq\mathcal N(0,C_n).
\end{equation}
Denote the density of $\mu_n$ on $\mH_n$ by $p_n$.
Then $\nabla p_n(x)=-p_n(x)C_n^{-1}x$, which together with $R_n^\trans=-R_n$ leads to
\begin{equation}
    \nabla\cdot\left[p_n(x)\xi\left(\norm{x}_\mH^2\right)C_n^{1/2}R_nx\right] =0.
\end{equation}
The Ornstein--Uhlenbeck part also annihilates $p_n$ under its formal adjoint.
Thus $L_n^*p_n=0$, where $L_n$ is the generator of \cref{eq:rotator-truncated-diffusion}.
By uniqueness for the finite-dimensional Fokker--Planck equation with globally Lipschitz drift and constant nondegenerate diffusion, $\mu_n$ is invariant.

Now couple the solutions using the same Wiener process, with $X_0\sim\mu$ and $X_0^{(n)}$ as its projection onto $\mH_n$.
Since $F$ is globally Lipschitz, standard stability estimates for stochastic differential equations, together with the Burkholder--Davis--Gundy inequality, yield
\begin{equation}
    \E\sup_{0\le t\le T}\norm{X_t-X_t^{(n)}}_\mH^2\lesssim\Tr(C-C_n)\to0,\qquad n\to\infty
\end{equation}
for every $T>0$.
Since $\mu_n$ is invariant for \cref{eq:rotator-truncated-diffusion}, $X_t^{(n)}\sim\mu_n$ for every $t\ge0$.
Moreover, $\mu_n$ is the pushforward of $\mu$ under the orthogonal projection onto the first $n$ blocks and hence converges weakly to $\mu$.
Therefore, for every $f\in C_b(\mH)$ and $t\ge0$,
\begin{equation}
    \E f(X_t)=\lim_{n\to\infty}\E f\left(X_t^{(n)}\right) =\lim_{n\to\infty}\int_{\mH_n}f(x)\mu_n(\md x) =\int_\mH f(x)\mu(\md x).
\end{equation}
Therefore $\mu$ is invariant, and stationarity follows from the time-homogeneous Markov property.
\end{proof}
Now we show that \cref{ex:nonlinear-rotator} meets the criteria required in \cref{thm:verifiable-entropy-production-criteria}.

We first verify the standing assumptions.
By \cref{prop:rotator-gaussian-invariance}, the equation has a global strong solution that is a $\mu$-stationary, time-homogeneous Markov process with continuous paths.
Moreover, the Gaussian second moment is
\begin{equation}\label{eq:rotator-gaussian-second-moment}
    \int_\mH\norm{x}_\mH^2\mu(\md x)=\Tr C=2\sum_{k=0}^\infty c_k<+\infty.
\end{equation}
So by \cref{eq:rotator-drift-linear-growth},
\begin{equation}\label{eq:rotator-drift-square-integrability}
    \int_\mH\norm{F(x)}_\mH^2\mu(\md x)\lesssim\int_\mH\norm{x}_\mH^2\mu(\md x)=\Tr C<+\infty,
\end{equation}
which implies that $F\in L^2(\mu;\mH)\subseteq L^1(\mu;\mH)$.
Hence stationarity and \cref{eq:rotator-noise-trace} yield
\begin{equation}\label{eq:rotator-path-integrability}
    \mathbb E\int_0^T\left(\norm{F(X_s)}_\mH+\norm{B(X_s)}_{\HS}^2\right)\md s=T\left[\norm{F}_{L^1(\mu;\mH)}+\Tr(2C)\right]<+\infty
\end{equation}
for every $T>0$.
In particular, the required time integral is finite almost surely, so \cref{asm:stationary-diffusion} holds.

Next, for any test function $\varphi\in\FC$,
\begin{equation}
    \norm{L\varphi}_{L^2(\mu)} \le\norm{D\varphi}_\infty\norm{F}_{L^2(\mu;\mH)} +\frac12\norm{D^2\varphi}_\infty\norm{B}_{\HS}^2,
\end{equation}
which is also finite according to \cref{eq:rotator-drift-square-integrability,eq:rotator-noise-trace}.
Consequently, we have $L\varphi\in L^2(\mu)\subseteq L^1(\mu)$, which implies \cref{asm:cylinder-generator-integrability}.

To identify the drift correction $d$ in \cref{asm:covariance-divergence}, Gaussian integration by parts gives
\begin{equation}
    \int_\mH\ip{ah}{D\varphi}_\mH\md\mu=2\int_\mH\ip{Ch}{D\varphi}_\mH\md\mu=2\int_\mH\ip{x}{h}_\mH\varphi\md\mu,\quad\forall h\in\mH,\,\varphi\in\FC,
\end{equation}
which means $d=-2x$.
By \cref{eq:rotator-gaussian-second-moment}, $d\in L^2(\mu;\mH)\subseteq L^1(\mu;\mH)$, leading to \cref{asm:covariance-divergence}.
The associated drift decomposition is therefore
\begin{equation}\label{eq:rotator-drift-decomposition}
    F(x)=\underbrace{(-x)}_{\Frev(x)}+\underbrace{\xi\left(\norm{x}_\mH^2\right)C^{1/2}Rx}_{\Firr(x)} \qquad \bar F(x)=d(x)-F(x)
    =\underbrace{(-x)}_{\Frev(x)}-\underbrace{\xi\left(\norm{x}_\mH^2\right)C^{1/2}Rx}_{\Firr(x)}.
\end{equation}

We can now check the time-reversal criteria in \cref{thm:verifiable-entropy-production-criteria}.
Since $B=(2C)^{1/2}$ is constant and $F$ is globally Lipschitz, it suffices to evaluate the extended Cameron--Martin energy directly.
By \cref{eq:rotator-drift-decomposition}, we obtain $\Firr\in\Ran(a^{1/2})$, and then direct calculation gives
\begin{equation}\label{eq:rotator-cameron-martin-energy}
    g(x)^2=\norm{a^{-1/2}F_{\mathrm{irr}}(x)}_\mH^2=\frac12\xi\left(\norm{x}_\mH^2\right)^2\sum_{k=0}^\infty r_k^2\norm{x_k}^2.
\end{equation}
Using $\int_\mH\norm{x_k}_{\mR^2}^2\,\md\mu=2c_k$ for each $k$ and Tonelli's theorem, we obtain
\begin{equation}
    \int_\mH g^2\md\mu\le\sup_{s\ge0}|\xi(s)|^2\sum_{k=0}^\infty r_k^2c_k \lesssim\norm{(r_k)_{k=0}^\infty}_{\ell^\infty}^2\norm{(c_k)_{k=0}^\infty}_{\ell^1}<+\infty.
\end{equation}
Thus the finite-energy condition holds, and we have
\begin{equation}
    \Prob_{[0,T]}\sim\Prob^{\mathrm{rev}}_{[0,T]}\quad\text{and}\quad\epr=2\int_\mH g^2\md\mu
    =\int_\mH\xi\left(\norm{x}_\mH^2\right)^2\sum_{k=0}^\infty r_k^2\norm{x_k}^2\mu(\md x)<+\infty
\end{equation}
by \cref{thm:verifiable-entropy-production-criteria}.
\begin{remark}
This example actually provides a concrete case when $\Firr\in\Ran(a^{1/2})\setminus\Ran(a)$ with a finite positive EPR value.
Let $r_k=1$ for any $k$ and $\xi(s)=1/(1+s)$, then we have
\begin{equation}
    \epr=2\int_\mH g^2\md\mu=\int_\mH\xi\left(\norm{x}_\mH^2\right)^2\sum_{k=0}^\infty r_k^2\norm{x_k}^2\mu(\md x)
    =\E_\mu\left[\frac{\norm{x}_\mH^2}{\left(1+\norm{x}_\mH^2\right)^2}\right]\in\left(0,\frac14\right].
\end{equation}
Meanwhile, since $a=2C$ and $\xi>0$,
\begin{equation}
    \Firr(x)\in\Ran(a)\iff\sum_kc_k^{-1}\norm{x_k}^2<+\infty.
\end{equation}
However, for $\mu=\mathcal N(0,C)$, the summands $c_k^{-1}\norm{x_k}^2$ are independently and identically distributed with a $\chi_2^2$ distribution, whose summation is infinite almost surely.
Therefore, we have $\Firr\notin\Ran(a)$ $\mu$-almost every $x$.
\end{remark}

\begin{example}\label{ex:state-dependent-blocks}
Take $\mU=\mH=\ell^2(\mN;\mR^2)$, a linear operator $C$, and a $2\times2$ matrix $J$ as defined in \cref{ex:nonlinear-rotator}.
Consider
\begin{equation}\label{eq:block-forward-diffusion}
    \md X_t=F(X_t)\md t+B(X_t)\md W_t,\quad X_0\sim\mu=\mathcal N(0,C),
\end{equation}
where
\begin{equation}
    F(x)=\bigl(-(1+\theta)m_k(x_k)x_k+(1-\theta) Jx_k\bigr)_{k=0}^\infty\quad\text{and}\quad B(x)u=\bigl(\sqrt{2c_km_k(x_k)}u_k\bigr)_{k=0}^\infty
\end{equation}
for any $x\in\mH$ and $u\in\mU$ with $m_k(x_k)=\exp\left(-\theta\|x_k\|^2/(2c_k)\right)$ and a parameter $\theta\in[0,1]$.
\end{example}
\begin{proposition}\label{prop:block-gaussian-invariance}
The measure $\mu=\mathcal N(0,C)$ is invariant for the transition semigroup of \cref{eq:block-forward-diffusion}.
In particular, its solution with initial law $\mu$ is a stationary, time-homogeneous Markov process with continuous $\mH$-valued paths.
\end{proposition}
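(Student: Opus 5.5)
The plan is to exploit the block-diagonal structure: the equation \cref{eq:block-forward-diffusion} decouples into a countable family of independent two-dimensional SDEs. Writing $W=(w_k)_{k\ge0}$ with independent planar Brownian motions $w_k$, the $k$-th block solves
\begin{equation}
    \md X^k_t=\bigl(-(1+\theta)m_k(X^k_t)X^k_t+(1-\theta)JX^k_t\bigr)\md t+\sqrt{2c_km_k(X^k_t)}\,\md w_k(t).
\end{equation}
I will first establish well-posedness on $\mH$, then prove invariance of $\mathcal N(0,c_kI_2)$ for each block, and finally assemble the blocks using independence.

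\emph{Step 1: global Lipschitz bounds uniform in $k$.} For the drift, the Jacobian of $x\mapsto m_k(x)x$ is $m_kI-(\theta/c_k)m_k\,xx^\trans$. Since $s\me^{-\theta s/2}\theta\le 2/\me$ with $s=\norm{x}^2/c_k$, this Jacobian is bounded independently of $k$ and $c_k$, and $J$ is an isometry. Hence $F$ is globally Lipschitz on $\mH$ with $F(0)=0$. For the noise, $x\mapsto\sqrt{2c_km_k(x)}$ has gradient of size $\sqrt{2c_k}\,(\theta\norm{x}/(2c_k))\me^{-\theta\norm{x}^2/(4c_k)}$. This is bounded by a constant independent of $k$, because $\sqrt{s}\,\me^{-\theta s/4}$ is bounded. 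Therefore
\begin{equation}
    \norm{B(x)-B(y)}_{\HS}^2\lesssim\sum_k\norm{x_k-y_k}^2,
\end{equation}
and $\norm{B(x)}_{\HS}^2\le4\sum_kc_k$. Standard Hilbert-space SDE theory then gives a unique global strong solution with continuous paths that is time-homogeneous Markov. By pathwise uniqueness, its $k$-th coordinate is the unique strong solution of the planar SDE above.

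\emph{Step 2: one-block invariance.} Let $p_k(x)\propto\exp(-\norm{x}^2/(2c_k))$ on $\R^2$. Then $c_km_kp_k\propto\exp(-(1+\theta)\norm{x}^2/(2c_k))$, so $\nabla(c_km_kp_k)=-(1+\theta)m_kxp_k$. In the adjoint $L_k^*p_k=-\nabla\cdot(p_kF_k)+\Delta(c_km_kp_k)$, the gradient part therefore cancels the $-(1+\theta)m_kx$ drift exactly. The remaining term $\nabla\cdot(p_kJx)$ vanishes because $p_k$ is radial and $\nabla\cdot(Jx)=0$. Integrating by parts gives $\int L_k\varphi\,p_k\,\md x=0$ for all $\varphi\in C_c^\infty(\R^2)$. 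This is the step I expect to need the most care: passing from the infinitesimal identity to genuine invariance. I would use Echeverría's theorem, which gives invariance once the martingale problem for $L_k$ is well posed. Well-posedness holds here since the coefficients are globally Lipschitz, so strong existence and pathwise uniqueness imply uniqueness in law. Alternatively, since the diffusion is nondegenerate ($m_k>0$) with smooth coefficients, I would argue via uniqueness of solutions of the Fokker--Planck equation, as in the proof of \cref{prop:rotator-gaussian-invariance}.

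\emph{Step 3: assembling the blocks.} Start from $X_0\sim\mu$, so that the initial blocks are independent with laws $\mathcal N(0,c_kI_2)$ and are independent of $W$. Each block $X^k$ is a measurable functional of $(X^k_0,w_k)$ only, and these pairs are independent across $k$. Hence, for each $t$, the blocks $(X^k_t)_k$ are independent, and each has law $\mathcal N(0,c_kI_2)$ by Step 2. The law of $X_t$ therefore agrees with $\mu$ on all finite-dimensional cylinder sets, and hence $X_t\sim\mu$. Stationarity of the whole process then follows from the time-homogeneous Markov property, exactly as at the end of the proof of \cref{prop:rotator-gaussian-invariance}.
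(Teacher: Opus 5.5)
Your proof is correct, but at the key invariance step it takes a different route from the paper.

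\emph{The paper's route.} The paper never checks the stationary Fokker--Planck equation for the full block generator. It isolates the radial dynamics $\md Y_t=-(1+\theta)m_k(Y_t)Y_t\md t+\sqrt{2c_km_k(Y_t)}\md W_t$ and writes its generator in the symmetric divergence form $p_k^{-1}\nabla\cdot(c_km_kp_k\nabla\varphi)$, which gives a $\mu_k$-reversible diffusion. It then conjugates by the deterministic rotation, $X_{k,t}=\exp((1-\theta)tJ)Y_{k,t}$. Because $m_k$ is radial, this process solves the $k$-th block equation driven by the rotated Brownian motion $\widetilde W_k$. Rotation invariance of $\mu_k$ then gives $X_{k,t}\sim\mu_k$.

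\emph{Your route.} You verify $L_k^*p_k=0$ directly: the $\Delta(c_km_kp_k)$ term cancels the reversible drift, and $\nabla\cdot(p_kJx)=0$. You then pass to genuine invariance via Echeverr\'ia's theorem (or Fokker--Planck uniqueness). This is the same mechanism the paper uses for \cref{prop:rotator-gaussian-invariance}.

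\emph{Comparison.} The paper's conjugation gives structural insight: each block is literally a rotated reversible diffusion, which foreshadows the split into $F_{\rev,k}$ and $F_{\irr,k}$. However, it relies on extra symmetry (radial $m_k$, rotation-commuting reversible drift, rotation-invariant $\mu_k$). It also tacitly needs uniqueness in law to transfer the marginals from the constructed weak solution (driven by $\widetilde W_k$) to the actual block. And it defers the infinitesimal-to-global step for the reversible part to ``standard finite-dimensional diffusion theory''. Your route makes that step explicit, with a named theorem and well-posedness coming from the globally Lipschitz coefficients. Your argument for independence across blocks is also stated more carefully than the paper's: each block is a measurable functional of $(X_0^k,w_k)$ by pathwise uniqueness, and these pairs are independent.
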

\begin{proof}
For each coordinate $F_k$ of $F$, we have
\begin{equation}
    \begin{aligned}
        \norm{DF_k(x_k)}&\le(1+\theta)\me^{-\theta\norm{x_k}^2/(2c_k)}\left\|I_2-\frac{\theta}{c_k}x_kx_k^\trans\right\|+(1-\theta)\|J\|\\
        &=(1+\theta)\me^{-\theta\norm{x_k}^2/(2c_k)}\left(1\vee\left|1-\theta\|x_k\|^2/c_k\right|\right)+(1-\theta)\le2.
    \end{aligned}
\end{equation}
So for all $x,y\in\mH$,
\begin{equation}
    \norm{F(x)-F(y)}_\mH^2 =\sum_{k=0}^\infty\norm{F_k(x_k)-F_k(y_k)}^2 \le4\sum_{k=0}^\infty\norm{x_k-y_k}^2 =4\norm{x-y}_\mH^2,
\end{equation}
by the mean value estimate in each block, and thus $F$ is globally Lipschitz.
Since $F(0)=0$, it also implies
\begin{equation}\label{eq:block-drift-linear-growth}
    \norm{F(x)}_\mH\lesssim \norm{x}_\mH, \qquad x\in\mH.
\end{equation}
Writing the \(k\)-th block of \(B\) as \(b_k(z)I_2\), where $b_k(z)\coloneqq\sqrt{2c_km_k(z)}$, we have
\begin{equation}
    \norm{\nabla b_k(z)}^2=\frac{\theta^2}{2c_k}\norm{z}^2\me^{-\frac{\theta}{2c_k}\norm{z}^2}\le\theta\me^{-1},
\end{equation}
so
\begin{equation}
    \norm{B(x)-B(y)}_{\HS}^2=2\sum_{k=0}^\infty\left|b_k(x_k)-b_k(y_k)\right|^2\le2\theta\me^{-1}\norm{x-y}_\mH^2.
\end{equation}
Meanwhile, we have
\begin{equation}\label{eq:block-noise-trace-bound}
    \norm{B(x)}_{\HS}^2 =\sum_{k=0}^\infty4c_km_k(x_k) \le4\sum_{k=0}^\infty c_k<+\infty,\quad\forall x\in\mH.
\end{equation}
Therefore, \cref{eq:block-forward-diffusion} admits a unique global strong solution with continuous paths and defines a time-homogeneous Markov process.

For each fixed $k$, consider
\begin{equation}
    \md Y_t=-(1+\theta)m_k(Y_t)Y_t\md t+\sqrt{2c_km_k(Y_t)}\md W_t,\quad Y_t\in\mR^2.
\end{equation}
A direct computation shows that the generator of the radial part can be written as
\begin{equation}
    \mathscr S_Y:\varphi(z)\mapsto-(1+\theta)m_k(z)z\cdot\nabla\varphi(z)+c_km_k(z)\Delta \varphi(z)=p_k^{-1}(z)\nabla\cdot\left(c_km_k(z)p_k(z)\nabla\varphi(z)\right),
\end{equation}
where we set
\begin{equation}
    p_k(z)\coloneqq\frac1{2\pi c_k}\me^{-\frac{\norm{z}^2}{2c_k}},\quad z\in\mR^2.
\end{equation}
Hence, for $f,g\in C_c^\infty(\mathbb R^2)$,
\begin{equation}
    -\int_{\mathbb R^2} f (\mathscr S_Y g)p_k\md z=\int_{\mathbb R^2} c_km_k(\nabla f)\cdot(\nabla g)p_k\md z,
\end{equation}
so the generator is symmetric in $L^2(\mu_k)$, where $\mu_k=\mathcal N(0,c_kI_2)$.
Since the coefficients are globally Lipschitz, standard finite-dimensional diffusion theory, together with the above symmetric divergence form, yields a unique $\mu_k$-reversible diffusion $Y_k$.

Since $\exp((1-\theta)tJ)$ is orthogonal and $m_k$ is radial, It\^o's formula gives
\begin{equation}
    \md X_{k,t}=\left[ -(1+\theta)m_k(X_{k,t})X_{k,t} +(1-\theta)JX_{k,t} \right]\md t+ \sqrt{2c_km_k(X_{k,t})} \exp((1-\theta)tJ)\md W_{k,t}
\end{equation}
with $X_{k,t}\coloneqq \exp((1-\theta)tJ)Y_{k,t}$.
Define
\begin{equation}
    \widetilde W_{k,t} \coloneqq \int_0^t \exp((1-\theta)sJ)\md W_{k,s}.
\end{equation}
Since $\exp((1-\theta)sJ)$ is orthogonal, $\widetilde W_k$ is again a two-dimensional Brownian motion.
Therefore,
\begin{equation}
    \md X_{k,t} = \left[ -(1+\theta)m_k(X_{k,t})X_{k,t} +(1-\theta)JX_{k,t} \right]\md t + \sqrt{2c_km_k(X_{k,t})}\md\widetilde W_{k,t},
\end{equation}
which corresponds to the dynamics for the $k$-th block in \cref{eq:block-forward-diffusion}.
Moreover, rotational invariance of $\mu_k$ implies that $X_{k,t}\sim\mu_k$ for all $t\ge0$.

Finally, recall that the coefficients in \cref{eq:block-forward-diffusion} are block diagonal and that $\mu=\bigotimes_{k=0}^\infty\mu_k$.
Since the initial blocks are independent under $\mu$ and the coordinate Brownian motions are independent, the block processes remain independent.
Together with $X_{k,t}\sim\mu_k$ for any $t\ge0$ and every $k$, this yields $X_t\sim\mu$ for all $t\ge0$.
Thus $\mu$ is invariant for the Hilbert-space diffusion.
Since the latter is time-homogeneous Markov, the solution started from $X_0\sim\mu$ is $\mu$-stationary.
\end{proof}
Now we show that \cref{ex:state-dependent-blocks} meets the criteria required in \cref{thm:verifiable-entropy-production-criteria}.

With the same deduction as in the previous example, \cref{asm:stationary-diffusion,asm:cylinder-generator-integrability} can be directly verified by \cref{prop:block-gaussian-invariance,eq:block-drift-linear-growth,eq:block-noise-trace-bound}, and it remains to identify the drift correction $d$ in \cref{asm:covariance-divergence}.
Applying the preceding finite-dimensional integration-by-parts identity blockwise results in
\begin{equation}
    \int_\mH \ip{a(x)h}{D\varphi(x)}_\mH \mu(\md x) = - \int_\mH \ip{d(x)}{h}_\mH \varphi(x)\mu(\md x), \qquad \forall\varphi\in\FC,\, h\in\mH
\end{equation}
where the $k$-th block of $d$ writes
\begin{equation}
    d_k(z)=p_k(z)^{-1}\nabla\cdot(a_k(z)p_k(z))=2c_kp_k(z)^{-1}\nabla(m_k(z)p_k(z))=-2(1+\theta)m_k(z)z
\end{equation}
and \(a_k(z)=2c_km_k(z)I_2\).
It follows that we have the upper bound
\begin{equation}
    \int_\mH\norm{d(x)}_\mH^2\mu(\md x)\le 4(1+\theta)^2\int_\mH\norm{x}_\mH^2\mu(\md x)=4(1+\theta)^2\Tr C<+\infty,
\end{equation}
so $d\in L^2(\mu;\mH)\subseteq L^1(\mu;\mH)$ and \cref{asm:covariance-divergence} is verified.
The associated drift decomposition is therefore
\begin{equation}\label{eq:block-drift-decomposition}
    F_k(x_k)=\underbrace{-(1+\theta)m_k(x_k)x_k}_{F_{\rev,k}(x_k)}+\underbrace{(1-\theta)Jx_k}_{F_{\irr,k}(x_k)} \qquad \bar F_k(x_k)
    =\underbrace{-(1+\theta)m_k(x_k)x_k}_{F_{\rev,k}(x_k)}-\underbrace{(1-\theta)Jx_k}_{F_{\irr,k}(x_k)}.
\end{equation}

The estimates above also give $F,\bar F\in L^1(\mu;\mH)$ and $B\in L^2(\mu;\mathcal L_2(\mU,\mH))$.
We now evaluate the extended Cameron--Martin energy and apply the entropy-production lower bound.
For each $x\in\mH$, $\Firr\in\Ran(a^{1/2})$ if and only if
\begin{equation}\label{eq:block-cameron-martin-energy}
    g(x)^2=\norm{a^{-1/2}F_{\mathrm{irr}}(x)}_\mH^2=\sum_{k=0}^\infty\frac{(1-\theta)^2}{2c_km_k(x_k)}\norm{x_k}^2
    =\sum_{k=0}^\infty(1-\theta)^2 Z_k(x)\me^{\theta Z_k(x)}<+\infty,
\end{equation}
where we introduce $Z_k(x)\coloneqq\|x_k\|^2/(2c_k)$.
Under distribution $\mu$, the variables $Z_k$ are independent and identically distributed with an $\Exp(1)$ distribution, so the strong law of large numbers implies that $N^{-1}\sum_{k=0}^{N-1}Z_k\to1$ $\mu$-almost surely as $N\to+\infty$.
Therefore, $\Firr\in\Ran(a^{1/2})$ for $\mu$-a.e. $x$ if and only if $\theta=1$.

Consequently, for the general case when $\theta\in[0,1)$, $\Firr\notin\Ran(a^{1/2})$ for $\mu$-almost every $x$, and \cref{thm:verifiable-entropy-production-criteria} gives $\epr=+\infty$.
Otherwise when $\theta=1$, we have $\epr=2\int g^2\md\mu=0$.
\begin{remark}
There is also an interesting pattern behind this example.
For $0\leq\theta<1$, consider the projection onto the first $N$ blocks and denote its Cameron--Martin energy by $(g^{[N]})^2$.
Then
\begin{equation}
    \int_\mH g^{[N]}(x)^2\md\mu=(1-\theta)^2\sum_{k=0}^{N-1} \int_\mH Z_k(x)\me^{\theta Z_k(x)}\md\mu=(1-\theta)^2\sum_{k=0}^{N-1}\E\left[Z\me^{\theta Z}\right]=N,\quad Z
    \sim\Exp(1),
\end{equation}
and it follows that $\epr^{[N]}=2N$.
Therefore, this example has also offered an instance where every finite truncation admits a finite EPR value, but the total EPR is infinite.
\end{remark}

\section{Conclusion}\label{sec:conclusion}

This work extends the path-space theory of entropy production from finite-dimensional stationary diffusions to diffusions on separable Hilbert spaces.
The passage to infinite dimensions is not merely formal.
To address the resulting difficulties, we formulate the reversible--irreversible decomposition directly with respect to the invariant measure, separate the identification of the reversed dynamics from the entropy calculation, and measure irreversibility through the Cameron--Martin geometry induced by the noise.
This yields an extended-valued lower bound that detects infinite entropy production, together with sufficient conditions for the matching identity in the finite-energy regime.

The resulting picture suggests that, for stochastic evolution equations, irreversibility should be understood as a joint property of the dynamics and the geometry of the noise, rather than of the drift alone.
The examples highlight two consequences that are particularly relevant in infinite dimensions: irreversible directions outside the covariance range may still have finite entropic cost, while finite entropy production at every finite-dimensional resolution does not by itself control the entropy production of the full path law.
These phenomena indicate that the noise-induced geometry provides a natural framework for understanding time-irreversibility in stochastic evolution equations.

More broadly, this perspective may help guide further work on entropy production for SPDEs and other infinite-dimensional Markov systems.
Rather than relying on model-specific finite-dimensional analogies, it suggests seeking structural criteria that remain meaningful under approximation, projection, and increasingly singular dynamics.
In particular, it would be natural to understand when finite-dimensional entropy-production calculations faithfully approximate the full system, and how irreversible behavior is preserved or altered in infinite-dimensional limits.
Such questions may help connect probabilistic time-reversal theory, stochastic thermodynamics, and the analysis of stochastic evolution equations within a common framework.

\section*{Acknowledgements}
This project is supported by the National Research Foundation, Singapore, under its AI Singapore Programme (AISG Award No.: AISG3-RP-2022-028).
Z.L. is supported by the Ministry of Education, Singapore, under its funding for the Research Centre of Excellence award to the Institute for Functional Intelligent Materials (Project No. EDUNC-33-18-279-V12-IFIM).
Y.Z. is supported by the National University of Singapore and the AI for Science Institute, Beijing, through the AISI--NUS Joint Research Initiative Fund 2025 Award.
A.Z. is supported by the National Research Foundation, Singapore, under the NRF fellowship (Project No. NRF-NRFF13-2021-0005).

\bibliographystyle{plainnat}
\bibliography{references}
\end{document}